\pretocmd\mvchr{\text}{}{\errmessage{Patching \noexpand\mvchr failed}}
\pretocmd\textmvs{\text}{}{\errmessage{Patching \noexpand\textmvs failed}}
\newtheorem{theorem}{Theorem}
\newtheorem{corollary}{Corollary}
\newtheorem{proposition}{Proposition}
\newtheorem{remark}{Remark}
\def\bds{\begin{displaystyle}}
\def\eds{\end{displaystyle}}
\renewcommand{\P}{{\mathbb{P}}}
\newcommand{\E}{{\mathbb{E}}}
\newcommand{\D}{{\mathbb{D}}}
\newcommand{\R}{{\mathbb{R}}}
\newcommand{\T}{{\mathbb{T}}}
\newcommand{\A}{{\mathbb{A}}}
\newcommand{\K}{{\mathbb{K}}}
\renewcommand{\S}{{\mathbb{S}}}
\newcommand{\N}{{\mathbb{N}}}
\newcommand{\1}{{\mathbf{1}}}
\renewcommand{\i}{{\bm{i}}}
\newcommand{\q}{{\quad}}
\newcommand{\F}{{\Female}} 
\newcommand{\M}{{\Male}} 
\newcommand{\FM}{{\FemaleMale}} 
\newcommand*{\Cdot}{\raisebox{-0.5ex}{\scalebox{2}{$\cdot$}}}
\newcommand{\spot}{\marginpar{$\bullet$\kern30pt}} 
\begin{document}

\title[General, Size Dependent Populations]{Limit theorems for multi-type general branching processes with population dependence}

\author[Fan et al.]{Jie Yen Fan}\address{School of Mathematical Sciences, Monash University, Clayton, Vic. 3058, Australia.}\email{jieyen.fan@monash.edu}
\author[]{Kais Hamza}\address{School of Mathematical Sciences, Monash University, Clayton, Vic. 3058, Australia.}\email{kais.hamza@monash.edu}
\author[]{Peter Jagers}\address{Department of Mathematical Sciences, Chalmers University of Technology.}\email{jagers@chalmers.se}
\author[]{Fima C. Klebaner}\address{School of Mathematical Sciences, Monash University, Clayton, Vic. 3058, Australia.}\email{fima.klebaner@monash.edu}

\thanks{Supported by the Australian Research Council Grants 
DP120102728, DP150103588, and the Knut and Alice Wallenberg Foundation}

\subjclass[2010]{60J80,60F05,92D25}

\keywords{Age and type structure dependent population processes, size dependent reproduction, carrying capacity, law of large numbers, central limit theorem, diffusion approximation}

\begin{abstract} 
A general multi-type population model is considered, where individuals live and reproduce according to their age and type, but also under the influence of the size and composition of the entire population. We describe the dynamics of the population density as a measure-valued process and obtain its asymptotics, as the population grows with the environmental carrying capacity. ``Density'' in this paper generally refers to the population size as compared to the carrying capacity. Thus, a deterministic approximation is given, in the form of a Law of Large Numbers, as well as a Central Limit Theorem. Migration can also be incorporated. 
This general framework is then adapted to model sexual reproduction, with a special section on serial monogamic mating systems.
\end{abstract}

\maketitle


\section{Introduction}

Classical stochastic population dynamics (branching processes of various generality) assume independently acting individuals, usually in stable circumstances, whereas deterministic approaches, while paying attention to the feedback loop between a population and its environment,  tend to sweep dependence among individuals under the carpet. In a series of papers, branching processes with population size dependence have been studied, first in the discrete time Galton-Watson case \cite{Kleb84,Kleb89}, more recently also for single-type general processes \cite{FanEtal19,HJK,HamJagKle13,JagKle00,JagKle11}, where \cite{FanEtal19} can be viewed as a single-type companion paper to the present. A first approach to general multi-type processes, comprising sexual reproduction, was made in \cite{JagKle16}. 

In this paper, we introduce a general model, where an individual is characterised by its age and type. Reproduction and death may depend on these as well as on the environment (the size and composition of the whole population). 
This is achieved by describing the population as a measure-valued process, as done in \cite{FanEtal19}. 
The dependence on the composition of the population could be, for instance, on the size of subpopulations of different types or ages. 
The resulting general model has a high level of flexibility and can be
adapted to the dynamics of quite diverse biological populations.   

In an ecological community, species interact; reproduction and death depend on the composition of the community. This complex structure can be approached in our setup. 
In particular, in predator-prey models, species can be viewed as types, and in food webs (e.g. \cite{Rossberg}), many complex interactions occur, where age, sex, and other types are important. 

Another example is the modelling of cell proliferation and differentiation, which has been studied by multi-type branching processes (e.g. \cite{HyrEtal05}, \cite{NorEtal11}). One particular phenomenon is that of the formation of oligodendrocytes: 
progenitor cells dividing into new progenitor cells, differentiating into oligodendrocytes, or dying. Here a detailed description of the cell population and its evolution can be given, for high cell densities.

In the second part (Section \ref{S:Monogamy}) of this paper, we give a special case of sexual reproduction, where couple formation is considered, in a (serial) monogamy system. In nature such systems can have quite varying forms from season-long to life-long couples, persisting  until the death of one of the partners. The approach can be adapted to different strategies of couple formation.  

Proofs are to be found in Appendices --  
Appendix \ref{S:Appendix-Multitype} for Section \ref{S:Multitype} and Appendix \ref{S:Appendix-Monogamy} for Section \ref{S:Monogamy}. 

Naively speaking, our object of study is the process describing how
the number of individuals $S^K_t(B\times A)$, with ages in an interval $A$
and types in a set  $B$, evolves as time $t\geq 0$ passes, for large $K>0$ . Here, $K$
historically is the habitat carrying capacity, more generally
interpretable as a system size parameter. Mathematically, thus, we
consider measure-valued processes $(S^K_t)_{t\geq 0}$, as $K\to\infty$. 
The process is supposed to start from a measure $S^K_0$, 
the total mass of which is of the order $K$, 
and the evolvement of  $S^K_t$ is governed by birth and death intensities 
that depend on individual age and type, population structure, and $K$. 

\section{Multi-type population structure dynamics} \label{S:Multitype}

\subsection{The model set-up}

The population model is defined as in \cite{FanEtal19}, but with some additional characteristics.  
It builds upon the Ulam-Harris family space, with type inherited from mother to child, 
as described in \cite{JagersMarkov}. 
An individual $x = x_1x_2\cdots x_n$ is thought of as 
the $x_n$th child of $\cdots$ of the $x_2$th child of the $x_1$th ancestor and 
$$ I := \bigcup_{n=1}^\infty \N^n $$ denotes the set of possible individuals. 
In particular, $xj$ denotes the $j$th child of individual $x$. 

Each individual $x$ born into the population is characterised by
its type, $\kappa_x$, and birth time, $\tau_x$, which is recursively
defined as the birth time of its mother plus her age at
bearing the individual, cf. \cite{Harris,Jagers,JagersMarkov}. 
Individuals age at rate 1 until death. 
Let $\lambda_x$ denote the lifespan of $x$ and 
write $\sigma_x = \tau_x+\lambda_x$ for the death time. 
Then at each time $\tau_x \leq t< \tau_x+\lambda_x$ 
the individual will be in {\em state} $s_x (t)= (\kappa_x,t-\tau_x) \in \K\times\A =: \S$. 
We write $\K$ for the set of possible types, assumed to be finite; 
and $\A$ for the set of possible ages, assumed to be a bounded interval $[0,\omega]$ with $\omega<\infty$ denoting the maximal age (following classical demographic notation), which in the present case will be defined in Section \ref{S:LimitThms}. 

The composition of the population at time $t$ can be represented by the measure
\begin{equation} \label{E:S}
S_t(di,dv) = \sum_{x\in I} \mathbf{1}_{\tau_x\le t<\sigma_x} \delta_{(\kappa_x,t-\tau_x)}(di,dv),
\end{equation}
where $\delta_{s}$ denotes the Dirac measure at $s$, assigning unit mass to $s$. 
Thus, $S_t$ is a measure with unit mass at the state of each
individual that is alive at time $t$.
(In this and later we allow ourselves to suppress writing the
dependence upon carrying capacity $K$ and often also upon time $t$.) 
We shall denote the set of finite non-negative measures on $\S$, 
with its weak topology, by $\mathcal{M}(\S)$, or $\mathcal{M}$ for short. 
Thus $S_t \in \mathcal{M}$ for each $t$. Here $\S$ has the product
topology of discrete and Euclidean topology, of course. 

The initial population $S_0$ is assumed to be finite and deterministic. 
Suppose that bearing and death times are stochastically given by type,
age  and population dependent rates. 
An individual of state $s$ in a population composition $S$
gives birth at rate $b_S(s)$ until it dies, with the death intensity being $h_S(s)$.
At each birth event, a number of offspring of type $i\in \K$ is generated, 
with the distribution of a random variable $\widecheck \xi^i_S(s)$. 
Similarly, offspring may be born at the death of the mother (splitting) 
with the distribution of the random variable $\widehat \xi^i_S(s)$. 
We reiterate that the suffix $S$ represents the population
composition, which could include population size and other aspects of
the population structure. 
Alternatively, the reproduction process could have been given as an
integer valued random measure on $\S$, like in \cite{JagersMarkov},
disintegrated into a stream of events and a random mass at each
event.  The variables  $\widecheck \xi^i_S(s)$ then have the Palm
distribution, given a birth event at $s\in \S$, \cite{PalmJagers,Kallenberg}.  
Those pertaining to the same mother but at different $s$ are also
assumed independent.  

\begin{remark} 
Indeed, the birth rate pertaining to each individual $x$ with $\tau_x<\infty$ 
gives rise to a point process of bearings by $x$ at 
ages $0< \alpha_{x}^{1}< \alpha_{x}^{2}< \ldots <\lambda_x$.  
(For simplicity, we disregard the possibility of immediate bearing at birth, \cite{Jagers}. 
Births at death (splitting) are handled separately in this paper.) 
The random variable $\widecheck \xi^{x,i}_{S_{\tau_x+\alpha_{x}^{j}}}(\kappa_x,\alpha_{x}^{j})$ 
that stands for the number of type $i$ children born by $x$ at time $\tau_x+\alpha_{x}^{j}$, 
then follows the distribution of $\widecheck \xi^i_S(s)$, with $S=S_{\tau_x+\alpha_{x}^{j}}$ and $s=(\kappa_x,\alpha_{x}^{j})$. 
A corresponding remark is valid for the number of children generated at death. 
We shall not enter into the awkward details of this but refer to the construction in \cite{JagersMarkov}. 
At the individual level, the construction there is, however, more general than the present 
not only through a richer type space but also since earlier reproduction history may influence the propensity to give birth. 
\end{remark}

The population model can be described in either of two ways, through the generator of
the process (as in \cite{JagKle00} and \cite{JagKle16}),  
or through the evolution equation of the process (as in
\cite{FanEtal19}), see Section \ref{SetupProof}.
Before giving the dynamic equation, we clarify the concepts of differentiation and integration on $\S$. 
Derivatives of a function on $\S $ refer to the derivatives with
respect to the second, continuous variable, i.e. age.
In particular, for $f:\S\to\R$ and $s=(i,v)$, we write $f^{(j)}$ to mean 
$$f^{(j)}(s) = f^{(j)}(i,v) = \partial_v^{j}f(i,v),$$
where $\partial_v^{j}$ denotes the $j$th derivative with respect to
the variable $v$. 
We also use $f'$ for $f^{(1)}$.
If $\mu$ is a Borel (positive or signed) measure on $\S$, then for $f:\S\to\R$, we write 
$$(f,\mu) = \int_\S f(s) \mu(ds) = \int_{\K\times\A} f(i,v)
\mu(di\times dv) = \sum_{i\in\K} \int_{\A} f(i,v) \mu(\{i\}\times dv).$$
For non-negative integers $j$, we write $C^j(\S)$ for the space of functions on $\S$ with continuous derivatives (with respect to the age variable) up to order $j$.
Since we only consider a bounded domain $\S$, functions in $C^j(\S)$
are bounded and so are the $j$ derivatives.
We can define the norm
$$||f||_{C^j(\S)} = \max_{0\le \iota \le j} \sup_{s\in \S} |f^{(\iota)}(s)| = \max_{0\le \iota \le j} \sup_{i\in\K,v\in\A} |f^{(\iota)}(i,v)|,$$
and will use $||\cdot||_{C^0}$ and $||\cdot||_\infty$ interchangeably. 

We write $\widecheck m^i_S(s) = \E[\widecheck \xi^i_S(s)|S]$, $i\in\K$, 
for the expectation of the number of type $i$ progeny at the birth event in question, 
given population size and composition, and the individual's state at that time. 
Note that the expectation is that of a random variable having the
specified conditional distribution and that the conditional covariances of the
number of children born by the same mother at two different bearing
events vanish. 
Similarly, we write $\widecheck\gamma^{i_1i_2}_S(s) = \E[\widecheck \xi^{i_1}_S(s)\widecheck \xi^{i_2}_S(s)|S]$, $i_1,i_2 \in\K$ 
and  define $\widehat m^i_S(s)$ and $\widehat\gamma^{i_1i_2}_S(s)$ for $\widehat \xi$ in a similar vein.
Then, the mean intensity of births of an individual of state $s$ at time $t$ is 
$\sum_i \widecheck m^i_{S_t}(s)b_{S_t}(s) + \sum_i \widehat
m^i_{S_t}(s)h_{S_t}(s)$. 

For $f\in C^1(\S)$, 
\begin{equation} \label{E:fSt}
(f,S_t) = (f,S_0) + \int_0^t (L_{S_u}f,S_u) du + M^f_t,
\end{equation}
where
$$L_Sf = f' - h_Sf + \sum_{i\in\K} f(i,0) \Big( b_S \widecheck m_S^i + h_S \widehat m_S^i \Big)$$
and $M^f_t$ is a locally square integrable martingale with predictable quadratic variation
\begin{multline*}
\big<M^f\big>_t = \int_0^t \bigg(
\sum_{i_1\in\K} \sum_{i_2\in\K} f(i_1,0)f(i_2,0)
\Big( b_{S_u} \widecheck\gamma^{i_1i_2}_{S_u} + h_{S_u} \widehat\gamma^{i_1i_2}_{S_u} \Big) \\
+ h_{S_u} f^2 - 2 \sum_{i\in\K} f(i,0) h_{S_u} \widehat m^i_{S_u} f , S_u \bigg) du.
\end{multline*}
For simplicity of notation, we shall write
$$n^i = b \widecheck m^i + h \widehat m^i \quad \textnormal{and} \quad
w^{i_1i_2} = b \widecheck\gamma^{i_1i_2} +h  \widehat\gamma^{i_1i_2}$$
from here onwards. 

Differential equation for specific characteristic of the population can be obtained from an appropriate test function $f$ (and $F$). 
For example, if $f=1$, the result is the population size; taking $f(i,v)=v$, yields the sum of the ages of the population. It is also possible to count individuals of a certain type; for instance, taking $f(i,v) = \mathbf{1}_{i=1}$, we have the size of the subpopulation of type 1, and similarly the average age of individuals of a type can be obtained.

\begin{remark}
The above results can be extended to test functions on $\S\times\T$, where $\T$ is a time interval $[0,T]$. 
Consider test functions $f(s,t) \equiv f(i,v,t)$ on $\S\times\T$. We shall write $f_t(s)$ to mean $f(s,t)$ and use the two notations interchangeably. 
For such $f$, $\partial_1f$ refers to the derivative with respect to the variable $s$ (or $v$ in this case), and $\partial_1f$ refers to the derivative with respect to $t$. 
For $f\in C^{1,1}(\S\times\T)$,
$$\big(f_t,S_t\big) = \big(f_0,S_0\big)
+ \int_0^t \Big( \partial_1f_u + \partial_2f_u 
- f_uh_{S_u} + \sum_{i\in\K} f_u(i,0)n^i_{S_u}, S_u\Big) du
+ M^f_t,$$
where $M^f_t$ is a martingale with the predictable quadratic variation
\begin{multline} \label{E:QVMf2v}
\big<M^f\big>_t
= \int_0^t \bigg(
\sum_{i_1\in\K} \sum_{i_2\in\K} f_u(i_1,0)f_u(i_2,0) w^{i_1i_2}_{S_u} + h_{S_u} f^2_u \\
- 2 \sum_{i\in\K} f_u(i,0) h_{S_u} \widehat m^{i}_{S_u} f_u, S_u \bigg) du.
\end{multline}
\end{remark}

\begin{remark} \label{R:MeasureM}
Using the same argument as in \cite[Section 5.2]{FanEtal19},
we can show the existence of a measure $M_t$ such that $(f,M_t) = M^f_t$,
and define the martingale $\int_0^t (f_u,dM_u)$ for $f\in C^0(\S\times\T)$.
(See Section \ref{S:MeasureMProof}.)
\end{remark}

\subsubsection{Applications: sexual reproduction} \label{S:ModelApplic1}
A simple application of the stochastic process introduced above, where individual life can be influenced by many factors like individual type and age, population size, and population structure, 
is to model sexual reproduction. 
Let $\K = \{1,2\} \equiv \{\F,\M\}$ with type 1 (denoted as \F) representing females (the reproducing type)  and type 2 (denoted as \M), representing males,
so that $b_S(\F,v)\ge0$ and $b_S(\M,v)=0$ for any $S\in\mathcal{M}(\S)$ and $v\in\A$.
Also, for $\xi = \widecheck \xi, \widehat \xi$ and $i=\F,\M$, 
$\xi^i_S(\F,v)\ge0$ and $\xi^i_S(\M,v)=0$; and
for $m = \widecheck m, \widehat m$, $\gamma = \widecheck \gamma, \widehat \gamma$, and $i,i_1,i_2=\F,\M$, 
$m^i_S(\M,v) = \gamma^{i_1,i_2}_S(\M,v) = 0$.
Writing $S^\F(dv) = S(\{\F\},dv)$ for the age structure of the female subpopulation and $S^\M(dv) = S(\{\M\},dv)$ for that of the male subpopulation, we can obtain equations governing the subpopulations. 
Note that $(f,S) = (f(\F,\cdot),S^\F) + (f(\M,\cdot),S^\M)$. 
For simplicity of notation, we shall write
$\mathfrak b_S(v)=b_S(\F,v)$, $\widehat{\mathfrak m}^i_S(v)= \widehat m^i_S(\F,v)$, $\mathfrak n^i_S(v)=n^i_S(\F,v)$ and $\mathfrak w^{i_1i_2}_S(v)=w^{i_1i_2}_S(\F,v)$.
Taking $f(i,v) = f_\F(v)\mathbf{1}_{i=\F}$ in \eqref{E:fSt} gives the age structure of the female subpopulation: 
$$(f_\F,S^\F_t) = (f_\F,S^\F_0) + \int_0^t \big(f'_\F-h_{S_u}(\F,\cdot)f_\F +f_\F(0)\mathfrak n^\F_{S_u}, S^\F_u\big) du + M^\F_t $$
with
$$\big<M^\F\big>_t = \int_0^t \big( f_\F(0)^2 \mathfrak w^{\F\F}_{S_u} + h_{S_u}(\F,\cdot) f_\F^2 - 2 f_\F(0) h_{S_u}(\F,\cdot) \widehat{\mathfrak m}^\F_{S_u} f_\F, S^\F_u \big) du; $$
whereas taking $f(i,v) = f_\M(v)\mathbf{1}_{i=\M}$ in \eqref{E:fSt} gives the dynamics of the male subpopulation: 
$$(f_\M,S^\M_t) = (f_\M,S^\M_0) + \int_0^t \big(f'_\M-h_{S_u}(\M,\cdot)f_\M, S^\M_u\big) du 
+f_\M(0) \int_0^t \big(\mathfrak n^\M_{S_u},S^\F_u\big) du + M^\M_t$$
with
$$\big<M^\M\big>_t = f_\M(0)^2 \int_0^t \big( \mathfrak w^{\M\M}_{S_u}, S^\F_u \big) du + \int_0^t \big( h_{S_u}(\M,\cdot) f_\M^2, S^\M_u \big) du. $$

This approach is different from the so-called bisexual branching process,   
where individuals mate to form couples, which then reproduce like in a
Galton-Watson type process in discrete time.  Those were introduced by
Daley \cite{Daley1968} and have been studied by many others
(cf. \cite{MolMotRam06} and references therein). 
A (pseudo) continuous time bisexual branching process, was given in
\cite{MolYan03}, mating supposed to occur at the events of a point
process and individuals born by couples having independent and identically distributed life spans. 
As opposed to that, 
our model is individual based, and females reproduce, with an
intensity, which may depend on the availability of males, among other things. 
Sexual reproduction without couple formation, like in fish, as well as
asexual reproduction, can be easily handled within this 
framework. Couple formation and sexual reproduction with mating, like
in many higher animals, can be captured, to some extent,  by careful
choice of rates and offspring distribution. 

\subsection{The limit theorems} \label{S:LimitThms}

We consider a family of population processes as above, indexed by some parameter $K\ge1$, which as mentioned, arises from the notion of carrying capacity of the habitat and often can be interpreted as some size where a population neither tends to increase nor decrease systematically. However, it needs not play such a role, and can be viewed just as a natural system scaling parameter, since we consider starting populations whose size is of the order $K$. The purpose is to establish the asymptotic behaviour as $K$ increases, under the assumption that the dynamics of the population depends on $K$ through the reproduction parameters.
For notational consistency with limits, we write the parameters in the form  $q^K_{S/K}(s)$, for $q=b,h,m,\gamma$. The conditions stated for $m$ and $\gamma$ are to be satisfied by
all $\widecheck m^i$, $\widehat m^i$, $\widecheck\gamma^{i_1i_2}$ and $\widehat\gamma^{i_1,i_2}$
for any $i,i_1,i_2 \in \K$.

We consider a finite time interval $\T= [0,T]$. Thus, to obtain a bounded age space $\A$, it suffices to assume that the age of the oldest individual over the family of processes at time 0, $a^*$, is finite:
$$a^* := \sup_{K\ge1} \sup_{i\in\K} \big( \inf\{v>0: S_0^K (i,(v,\infty))=0\} \big) <\infty.$$
Then the age of the oldest individual in the population at time $t$
cannot exceed $t+a^*$, and we can take $\A=[0,\omega]$ with
$\omega=T+a^*$ as the age space, and $\S=\K\times\A$ the individual state space.

Specifically, we establish the Law of Large Numbers and the Central Limit Theorem associated with our age and type structure process. 
For a simple case of sexual reproduction (cf. Section \ref{S:ModelApplic1}) the Law of Large Numbers was stated in \cite{JagKle16}, without proof. Here, we give and prove results for more general cases. 
The Law of Large Numbers is stated in Section \ref{S:LLN} with proof in Section \ref{S:LLNProof}, and the Central Limit Theorem is stated in Section \ref{S:CLT} with proof in Section \ref{S:CLTProof}.

\subsubsection{Law of Large Numbers} \label{S:LLN}

Denote by $\bar S^K_t = S^K_t/K$ the \emph{density} of the population.
For any $f \in C^1(\S)$ and $t\in\T$,
\begin{equation} \label{E:SbarK}
(f,\bar S^K_t) = (f,\bar S^K_0) + \int_0^t (L^K_{\bar S^K_u}f, \bar S^K_u) du + \frac1K M^{f,K}_t,
\end{equation}
where
\begin{equation} \label{E:LK}
L^K_Sf = f' - h^K_Sf + \sum_{i\in\K} f(i,0) n_S^{i,K}
\end{equation}
and $M^{f,K}_t$ is a locally square integrable martingale with predictable quadratic variation
\begin{multline*}
\big<M^{f,K}\big>_t = \int_0^t \bigg(
\sum_{i_1\in\K} \sum_{i_2\in\K} f(i_1,0)f(i_2,0) w^{i_1i_2,K}_{\bar S^K_u} + h^K_{\bar S^K_u} f^2 \\
- 2 \sum_{i\in\K} f(i,0) h^K_{\bar S^K_u} \widehat m^{i,K}_{\bar S^K_u} f, S^K_u \bigg) du.
\end{multline*}
We have the Law of Large Numbers when the population process is {\em demographically smooth}, i.e. satisfies the following conditions:
\begin{enumerate}
\item [(C0)] The model parameters $b$, $h$, $m$ and $\gamma$ are uniformly bounded, 
i.e., for $q=b,h,m, \gamma$, $\sup q^K_\mu(s) <\infty$, where supremum
is taken over $s\in\S$, $K\ge1$ and $\mu\in \mathcal{M}(\S)$. 
\item [(C1)]
The model parameters $b$, $h$ and $m$ are normed uniformly Lipschitz in the following sense:
for $q=b,h,m$, there exists $c>0$ such that for all $K\ge1$, $||q^K_\mu - q^K_\nu||_\infty \le c ||\mu-\nu||$,
with $||\mu|| := \sup_{||f||_\infty\le1, f \textnormal{ continuous}} |(f, \mu)|$.
\item [(C2)]
For $q=b,h,m$, the sequence $q^K$ converges (pointwise in $\mu$ and uniformly in $s$)
to $q^\infty_\mu(s) := \lim_{K\to\infty} q^K_{\mu}(s)$.
\item [(C3)]
$\bar S^K_0$ converges weakly to $\bar S_0$
and $\sup_K (1,\bar S^K_0) <\infty$,
that is, the process stabilises initially.
\end{enumerate}

\begin{theorem} \label{T:LLN}
Under the smooth demography conditions (C0)-(C3), the measure-valued process
$\bar S^K$ converges weakly in the Skorokhod space
$\D(\T,\mathcal{M}(\S))$, as $K\to\infty$, to a deterministic
measure-valued process $\bar S$, satisfying
\begin{equation} \label{E:fSInfty}
(f, \bar S_t) = (f,\bar S_0) + \int_0^t (L^\infty_{\bar S_u} f, \bar S_u) du
\end{equation}
where
\begin{equation} \label{E:Linfty}
L^\infty_Sf = f' - h^\infty_Sf + \sum_{i\in\K} f(i,0) n_S^{i,\infty},
\end{equation}
for any $f\in C^1(\S)$ and $t\in\T$.
\end{theorem}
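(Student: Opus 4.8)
The plan is to follow the classical tightness--identification--uniqueness scheme for measure-valued processes, adapted to the population-dependent multi-type setting as in the single-type companion \cite{FanEtal19}. First I would settle the limit equation itself: show that \eqref{E:fSInfty}--\eqref{E:Linfty} admits a unique (weakly continuous) solution on $\T$. For frozen $S$, the operator $L^\infty_S$ generates an age-shift semigroup with killing rate $h^\infty_S$ and injection of mass at age $0$ through the $n^{i,\infty}_S$, so writing \eqref{E:fSInfty} in mild form along characteristics turns it into a fixed-point equation for $t\mapsto\bar S_t$. Given two solutions $\bar S,\tilde S$ with the same initial datum, testing against $f\in C^1(\S)$ and using the uniform bounds (C0) together with the normed Lipschitz property (C1) yields $\|\bar S_t-\tilde S_t\|\le c'\int_0^t\|\bar S_u-\tilde S_u\|\,du$, whence Gronwall forces $\bar S=\tilde S$; existence follows from the same contraction. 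Here (C1) is used to compare two limit solutions, for which the dual norm $\|\cdot\|$ is perfectly well behaved.

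Next I would prove tightness of $(\bar S^K)_{K\ge1}$ in $\D(\T,\mathcal{M}(\S))$ via Jakubowski's criterion. Since $\S=\K\times\A$ is compact, the mass level sets $\{\mu:(1,\mu)\le R\}$ are weakly compact, so compact containment reduces to a uniform mass bound: taking $f\equiv1$ in \eqref{E:SbarK} and noting that (C0) bounds the drift coefficient $-h^K_{\bar S^K_u}+\sum_i n^{i,K}_{\bar S^K_u}$, one gets after taking expectations and applying Gronwall that $\sup_K\sup_{t\le T}\E(1,\bar S^K_t)<\infty$ (using (C3) at $t=0$). For the second Jakubowski condition I would verify the Aldous condition for the real semimartingales $(f,\bar S^K)$ with $f$ ranging over a countable dense subset of $C^1(\S)$: for stopping times $\tau\le T$ and small $\delta$, the increment $(f,\bar S^K_{\tau+\delta})-(f,\bar S^K_\tau)$ is controlled by the bounded drift (again (C0) plus the mass bound) and a martingale increment estimated below.

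The martingale part is then shown to be asymptotically negligible. From the displayed quadratic variation, $\langle\tfrac1K M^{f,K}\rangle_t=\tfrac1{K^2}\langle M^{f,K}\rangle_t$; its integrand is bounded by (C0) and integrated against $S^K_u$, whose total mass is $K(1,\bar S^K_u)=O(K)$, so $\E\langle\tfrac1K M^{f,K}\rangle_T=O(1/K)$. Doob's inequality gives $\E\sup_{t\le T}|\tfrac1K M^{f,K}_t|^2\to0$, so the martingale contribution disappears in the limit.

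Finally I would identify every weak limit point and invoke uniqueness. By Prokhorov a subsequence converges to some $\bar S$; by Skorokhod representation I may assume a.s. convergence, and since the limit is weakly continuous in $t$ the convergence is uniform on $[0,T]$. In \eqref{E:SbarK} the initial term converges by (C3), the martingale term vanishes by the previous step, and the $f'$ contribution converges by weak convergence, so it remains to pass to the limit in the measure-dependent birth/death coefficients, i.e. to show $\int_0^t(q^K_{\bar S^K_u}\phi,\bar S^K_u)\,du\to\int_0^t(q^\infty_{\bar S_u}\phi,\bar S_u)\,du$ for $q\in\{h^K,n^{i,K}\}$ and the relevant $\phi$. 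I would split the integrand as $(q^K_{\bar S^K_u}-q^\infty_{\bar S^K_u})\phi$, handled by (C2); $(q^\infty_{\bar S^K_u}-q^\infty_{\bar S_u})\phi$, bounded via (C1) by $c\|\bar S^K_u-\bar S_u\|$; and $q^\infty_{\bar S_u}\phi$ integrated against $\bar S^K_u-\bar S_u$, which vanishes by weak convergence since $q^\infty_{\bar S_u}$ is a fixed continuous integrand. I expect the middle, \emph{reference-measure} term to be the main obstacle: the prelimit $\bar S^K_u$ is purely atomic, so $\|\bar S^K_u-\bar S_u\|$ need not tend to $0$ in the dual norm of (C1), and weak convergence does not control it. Closing this step is where (C1) and (C2) must be used in tandem — upgrading the pointwise-in-$\mu$ convergence of (C2) to convergence uniform over the mass-bounded set of measures visited, using the equi-Lipschitz bound of (C1), and exploiting that the parameters enter only through weakly continuous functionals of the measure so that weak convergence of $\bar S^K_u$ suffices. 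Uniqueness from the first step then shows all limit points coincide with the unique solution of \eqref{E:fSInfty}, giving weak convergence of the full sequence.
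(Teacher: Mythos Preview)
Your proposal follows the paper's proof essentially step for step: Jakubowski's criterion with compact containment from a uniform mass bound and Aldous--Rebolledo for the real semimartingales $(f,\bar S^K)$; the martingale part killed via its $O(1/K)$ predictable quadratic variation; identification of limit points by splitting $L^K_{\bar S^K_u}f-L^\infty_{\mathscr{S}_u}f$ using (C1) and (C2); and uniqueness of the limit equation by Gronwall. One structural point worth making explicit: for uniqueness the paper implements your ``mild form along characteristics'' concretely via a shift operator $\widehat\varTheta_r\phi(i,v)=\phi(i,v+r)$, rewriting \eqref{E:fSInfty} so that the derivative $f'$ disappears. This is not cosmetic --- without it you can only test against $\phi\in C^1$, whereas the norm $\|\cdot\|$ is a supremum over $\phi\in C^0$, so you cannot directly pass from a bound on $|(\phi,\mathscr{S}^1_t-\mathscr{S}^2_t)|$ for $C^1$ test functions to a bound on $\|\mathscr{S}^1_t-\mathscr{S}^2_t\|$ and close Gronwall. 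The shifted representation lets you estimate in $C^0$ and then approximate.

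On the obstacle you flag in the identification step: the paper uses exactly the same device, bounding $\|q^K_{\bar S^K_u}-q^K_{\mathscr{S}_u}\|_\infty$ by $c\|\bar S^K_u-\mathscr{S}_u\|$ via (C1) (with the roles of (C1) and (C2) swapped relative to your split) and asserting this tends to zero, without the additional argument you anticipate. So your proposal is already at the level of detail of the paper's own proof on this point; the refinement you sketch --- that in practice the parameters depend on the measure through weakly continuous functionals so that weak convergence of $\bar S^K_u$ suffices --- is the right way to justify the step and is implicit in the applications, but the paper does not spell it out either.
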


\subsubsection{Relevant spaces and embeddings}

Before stating the Central Limit Theorem, we introduce some Sobolev spaces on $\S$ and the relevant duals, where the convergence will take place.
The compactness of the space $\S$ allows us to work with the classical Sobolev spaces, instead of the weighted Sobolev spaces like other scholars (e.g. \cite{Bor90,Mel98,Met87}) did. 

For $j\in\N_0$, let $W^j(\S)$ be the closure of $C^\infty(\S)$ with respect to the norm
$$||f||_{W^j(\S)} = \bigg(\sum_{\iota=0}^j \sum_{i\in\K} \int_{\A} \big(f^{(\iota)}(i,v)\big)^2 dv \bigg)^{1/2},$$
where $f^{(\iota)}$ are the (weak) derivatives of $f$. In other words,
$W^j(\S)$ is the set of functions $f$ on $\S$ such that $f$ and its
weak derivatives up to order $j$ have a finite $L^2(\S)$ norm,
$$||f||_{L^2(\S)} = \bigg(\sum_{i\in\K} \int_{\A} f(i,v)^2 dv \bigg)^{1/2}. $$
The space $W^j(\S)$ is a Hilbert space.
As in \cite{FanEtal19}, we have the following embeddings:
$$C^j(\S) \hookrightarrow W^j(\S) \ ,\
W^{j+1}(\S) \hookrightarrow C^j(\S) \q \textnormal{and} \q
W^{j+1}(\S) \underset{H.S.}{\hookrightarrow} W^j(\S),$$
where $H.S.$ stands for Hilbert-Schmidt embedding. 

The dual spaces $C^{-j}(\S)$ and $W^{-j}(\S)$
of $C^{j}(\S)$ and $W^{j}(\S)$ respectively, are embedded as follows: 
$$W^{-j}(\S) \hookrightarrow C^{-j}(\S) \ ,\
C^{-j}(\S) \hookrightarrow W^{-(j+1)}(\S) \ \ \textnormal{and} \ \
W^{-j}(\S) \underset{H.S.}{\hookrightarrow} W^{-(j+1)}(\S),$$
and in particular, 
$$C^{-0}(\S) \hookrightarrow C^{-1}(\S) \hookrightarrow W^{-2}(\S) \hookrightarrow W^{-3}(\S) \underset{H.S.}{\hookrightarrow} W^{-4}(\S).$$
The Hilbert-Schmidt embedding plays an important role, since with
this, a ball in the smaller space ($W^{-3}$) is precompact in the larger space ($W^{-4}$). This fact renders it possible to prove the coordinate tightness of the fluctuation process. The other embeddings are required due to the derivative in the definition of the operator $L^K_S$. 

For ease of notation, we suppress writing $(\S)$ for the spaces from here onwards,
that is, we will write $W^j$ and $C^j$ to mean $W^j(\S)$ and
$C^j(\S)$. Further, $L^\infty$ denotes the set of all bounded
measurable functions on $\S=\K\times \A$ with its natural product
sigma-algebra $\mathcal{B}(\S)$. 

\subsubsection{The Central Limit Theorem} \label{S:CLT}

Let $Z^K = \sqrt{K}(\bar S^K - \bar S)$.
Then, for any $f\in C^1$ and $t\in\T$,
\begin{equation} \label{E:ZKL}
(f,Z_t^K) = (f,Z_0^K)
+ \sqrt{K} \int_0^t \big( L^K_{\bar S^K_u}f - L^\infty_{\bar S_u}f , \bar{S}_u \big) du 
+ \int_0^t \big( L^K_{\bar S^K_u}f , Z^K_u \big) du + \tilde{M}_t^{f,K},
\end{equation}
where $L^K_S$ and $L^\infty_S$ are defined as in \eqref{E:LK} and \eqref{E:Linfty},
and $\tilde{M}_t^{f,K}$ is a square integrable martingale with predictable quadratic variation
\begin{multline*}
\big<\tilde M^{f,K}\big>_t = \int_0^t \bigg(
\sum_{i_1\in\K} \sum_{i_2\in\K} f(i_1,0)f(i_2,0) w^{i_1i_2,K}_{\bar S^K_u} + h^K_{\bar S^K_u} f^2\\
- 2 \sum_{i\in\K} f(i,0) h^K_{\bar S^K_u} \widehat m^{i,K}_{\bar S^K_u} f , \bar S^K_u \bigg) du.
\end{multline*}

In addition to (C0)--(C3), we impose the following assumptions:
\begin{enumerate}
\item[(A0)]
Conditions (C1) and (C2) hold also for $q=\gamma$.
\item[(A1)]
$\Xi := \sup_{i,s,S,K} \widecheck \xi^{i,K}_S(s) \vee \sup_{i,s,S,K}
\widehat \xi^{i,K}_S(s)$ is square integrable.  
\item[(A2)]
The reproduction parameters $b_S^K(s)$, $h^K_S(s)$ and $m^K_S(s)$ and their limits (in the sense of (C2)) are in the space $C^4$ (in the argument $s$) with convergence in $C^4$.
Moreover,
$\sqrt K \sup_\mu ||q^K_\mu-q^\infty_\mu||_\infty \rightarrow 0$ as $K\to\infty$,
$\sup_{K,\mu} ||q^K_\mu||_{C^3} <\infty$ and $\sup_\mu ||q^\infty_\mu||_{C^4} <\infty$,
for $q=b,h,m$.
\item[(A3)]
The limiting parameters (seen as functions of $S$) are Fr\'echet differentiable at every $S$.
Namely, for $q=b,h,m$, for every $\mu$,
there exists a continuous linear operator $\partial_S q^\infty_{\mu} :
W^{-4} \to L^{\infty}$ such that 
$$\lim_{||\nu||_{W^{-4}}\to 0} \frac{1}{||\nu||_{W^{-4}}} ||q^\infty_{\mu+\nu}-q^\infty_{\mu}-\partial_S q^\infty_{\mu}(\nu)||_\infty=0.$$
Moreover, $\sup_{\mu} ||\partial_S q^\infty_{\mu}||_{\mathbb{L}^{-4}} \le c$,
where $\mathbb{L}^{-4} = L(W^{-4},L^\infty)$ denotes the space of
continuous linear mappings from $W^{-4}$ to $L^\infty$. 
\item[(A4)]
$Z^K_0$ converges to $Z_0$ in $W^{-4}$
and $\sup_K ||Z^K_0||_{W^{-2}} <\infty$.
\end{enumerate}
Then the fluctuation process $Z^K$ converges.

\begin{theorem} \label{T:CLT}
Under assumptions (C0)--(C3) and (A0)--(A4),
as $K\to \infty$, the process $(Z^K_t)_{t\in\T}$ converges weakly in $\mathbb{D}(\T,W^{-4})$
to $(Z_t)_{t\in\T}$ satisfying, for $f\in W^4$ and $t\in\T$,
\begin{multline} \label{E:fZInfty}
(f,Z_t) = (f,Z_0)
+ \int_0^t \Big( - \partial_S h^\infty_{\bar{S}_u}(Z_u) f + \sum_{i\in\K} f(i,0) \partial_S n^{i,\infty}_{\bar{S}_u}(Z_u) , \bar{S}_u \Big) du \\
+ \int_0^t \Big( f' - h^\infty_{\bar{S}_u} f + \sum_{i\in\K} f(i,0) n^{i,\infty}_{\bar{S}_u} , Z_u \Big) du + \tilde{M}_t^{f,\infty},
\end{multline}
where $\tilde{M}_t^{f,\infty}$ is a continuous Gaussian martingale with predictable quadratic variation
$$\big<\tilde M^{f,\infty}\big>_t = \int_0^t \bigg(
\sum_{i_1\in\K} \sum_{i_2\in\K} f(i_1,0)f(i_2,0) w^{i_1i_2,\infty}_{\bar S_u}+ h^\infty_{\bar S_u} f^2 
- 2 \sum_{i\in\K} f(i,0) h^\infty_{\bar S_u} \widehat m^{i,\infty}_{\bar S_u} f , \bar S_u \bigg) du.$$
\end{theorem}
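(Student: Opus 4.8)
The plan is to follow the standard two-part program for diffusion approximations of measure-valued processes (in the spirit of M\'etivier and M\'el\'eard): first establish tightness of $\{Z^K\}$ in $\mathbb{D}(\T,W^{-4})$, then identify every limit point as the unique solution of \eqref{E:fZInfty}. The starting point is the semimartingale decomposition \eqref{E:ZKL}, which I would rewrite by splitting the $\sqrt K$-drift as $L^K_{\bar S^K_u}f - L^\infty_{\bar S_u}f = (L^K_{\bar S^K_u}f - L^\infty_{\bar S^K_u}f) + (L^\infty_{\bar S^K_u}f - L^\infty_{\bar S_u}f)$. Condition (A2) (the rate $\sqrt K\sup_\mu\|q^K_\mu-q^\infty_\mu\|_\infty\to0$) kills the first difference after multiplication by $\sqrt K$, while the Fr\'echet differentiability (A3), applied at $\bar S_u$ with increment $\bar S^K_u-\bar S_u=Z^K_u/\sqrt K$, turns the second into $\partial_S L^\infty_{\bar S_u}f(Z^K_u)$ plus a remainder. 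Collecting terms, $Z^K$ solves an equation of the form of \eqref{E:fZInfty} with a vanishing error and with $L^K_{\bar S^K_u}$ in place of $L^\infty_{\bar S_u}$ in the drift that is linear in $Z^K$.

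For tightness, the first task is to derive uniform moment bounds in the appropriate dual Sobolev norms. Testing \eqref{E:ZKL} against a Hilbert basis of $W^4$, one bounds the finite-variation part using the uniform boundedness and Lipschitz conditions (C0)--(C1), (A0) and the operator bound $\sup_\mu\|\partial_S q^\infty_\mu\|_{\mathbb L^{-4}}\le c$ of (A3), and bounds $\mathbb E\langle\tilde M^{f,K}\rangle_t$ via the explicit quadratic-variation formula together with the square-integrability (A1). The derivative $f'$ inside $L$ costs one degree of smoothness, which is why the estimates must be threaded through the embedding chain $C^{-1}\hookrightarrow W^{-2}\hookrightarrow W^{-3}\hookrightarrow_{H.S.}W^{-4}$; a Gronwall argument, seeded by the initial bound $\sup_K\|Z^K_0\|_{W^{-2}}<\infty$ of (A4), then yields $\sup_K\mathbb E[\sup_{t\le T}\|Z^K_t\|_{W^{-3}}^2]<\infty$. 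Tightness follows from the Aldous--Rebolledo criterion, whose two requirements — control of the modulus of continuity of the drift and of the increments of the predictable quadratic variation — reduce to the same operator bounds. The crucial structural point is that these estimates live in $W^{-3}$, so the Hilbert--Schmidt embedding $W^{-3}\hookrightarrow_{H.S.}W^{-4}$ upgrades coordinatewise relative compactness to genuine tightness in $\mathbb{D}(\T,W^{-4})$.

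Convergence of the characteristics is then handled termwise. The linear drift $\int_0^t(L^K_{\bar S^K_u}f,Z^K_u)\,du$ converges to $\int_0^t(L^\infty_{\bar S_u}f,Z_u)\,du$, using (C2) and (A2) to replace $L^K_{\bar S^K_u}$ by $L^\infty_{\bar S_u}$ and the Law of Large Numbers (Theorem \ref{T:LLN}) for $\bar S^K_u\to\bar S_u$; the $\partial_S L^\infty_{\bar S_u}f(Z^K_u)$ term converges to the first drift in \eqref{E:fZInfty}, for which one needs the functionals $\partial_S h^\infty_{\bar S_u}(\cdot)$ and $\partial_S n^{i,\infty}_{\bar S_u}(\cdot)$ to be continuous on $W^{-4}$, which is exactly the content of (A3). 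For the noise, the explicit formula for $\langle\tilde M^{f,K}\rangle_t$ converges (by (C0)--(C2), (A0) and Theorem \ref{T:LLN}) to the stated limiting bracket, and (A1) guarantees that the jumps of $\tilde M^{f,K}$ are asymptotically negligible; Rebolledo's martingale central limit theorem then gives convergence to the continuous Gaussian martingale $\tilde M^{f,\infty}$. Passing to the limit along a convergent subsequence shows that any limit $Z$ satisfies \eqref{E:fZInfty} for all $f\in W^4$, and since that equation is \emph{linear} in $Z$ with bounded, Lipschitz coefficients, a Gronwall estimate on $\|Z_t-Z'_t\|_{W^{-4}}$ forces uniqueness, so the whole sequence converges.

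The main obstacle is the remainder control in the Fr\'echet expansion: the error $\sqrt K\,o(\|\bar S^K_u-\bar S_u\|_{W^{-4}})=\|Z^K_u\|_{W^{-4}}\,\varepsilon(\|Z^K_u\|_{W^{-4}}/\sqrt K)$ vanishes only because $\|Z^K_u\|_{W^{-4}}$ is tight and $\sqrt K\to\infty$, so the differentiability argument and the moment bound must be run together in a bootstrapping loop rather than sequentially. Managing this interdependence in the dual Sobolev topology, uniformly in $u\le T$ and simultaneously with the Gronwall estimate that produces the moment bound in the first place, is where the bulk of the technical work lies.
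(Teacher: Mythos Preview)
Your overall program---tightness via moment bounds and Aldous--Rebolledo, identification of the limit via convergence of characteristics and the martingale CLT, uniqueness by linearity and Gronwall---matches the paper's. But there is a genuine gap in the moment-bound step, and it is exactly the step you flag as ``where the bulk of the technical work lies.''

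You propose to obtain $\sup_K\mathbb E[\sup_{t\le T}\|Z^K_t\|_{W^{-3}}]<\infty$ by testing \eqref{E:ZKL} against a Hilbert basis and running Gronwall. The problem is that the drift $(L^K_{\bar S^K_u}f,Z^K_u)$ contains $f'$, so $L^K_S:W^j\to W^{j-1}$ and the bound you get is $\|Z^K_t\|_{W^{-j}}\le C+\int_0^t\|Z^K_u\|_{W^{-(j-1)}}\,du$, which does not close in any single norm. Knowing $\sup_K\|Z^K_0\|_{W^{-2}}<\infty$ is only an initial condition; it does not give you a trajectory bound in $W^{-2}$, and without one you cannot bootstrap to $W^{-3}$. ``Threading through the embedding chain'' does not help here, because each step of the chain still loses a derivative and you never reach a closed inequality.

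The paper resolves this with a device you do not mention: an alternative representation obtained by testing against time-dependent functions $\varTheta_{t-u}\phi(i,v)=\phi(i,v+t-u)$ that move along characteristics (Proposition~\ref{P:ZAltRep}). This kills the transport term $f'$ entirely, leaving only the zeroth-order operator $\widehat L^K_S$ in the drift, so the Gronwall loop closes in $W^{-2}$ and yields $\sup_{t,K}\mathbb E[\|Z^K_t\|_{W^{-2}}]<\infty$ (Proposition~\ref{P:ZW-2Bound}). Only then can one return to the original representation \eqref{E:ZKL} and, with the $W^{-2}$ trajectory bound now available on the right-hand side, deduce the $W^{-3}$ uniform-in-time bound (Proposition~\ref{P:ZW-3Bound}). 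The same shift trick is used again to show that any limit point satisfies the stated equation and that the solution is unique. Your proposal would go through once this representation is inserted at the start of the moment-bound argument.
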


For each $t$, the limit $Z_t$ takes value in $W^{-4}$. Under certain condition, its expectation corresponds to a signed measure. This is made precise in the following proposition. 

\begin{proposition} \label{P:EZtMeasure}
Suppose that $\partial_Sh^\infty_{\mu}(B)(s)$ is of the form $(g^h(\mu,s,\cdot),B)$ where $g^h(S,s,\cdot) \in W^4$ with $\sup_{S,s} ||g^h(S,s,\cdot)||_{W^4} <\infty$,
and similarly, $\partial_Sn^{i,\infty}_{\mu}(B)(s)$ is of the form $(g^{n,i}(\mu,s,\cdot),B)$ where $g^{n,i}(S,s,\cdot) \in W^4$ with $\sup_{S,s} ||g^{n,i}(S,s,\cdot)||_{W^4} <\infty$.
Then for each $t$, $\nu_t: W^4 \ni f\mapsto \E[(f,Z_t)]$ defines a signed measure on
$(\S, \mathcal{B}(\S))$. 
\end{proposition}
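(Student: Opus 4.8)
The plan is to read off a closed equation for the deterministic functional $\nu_t(f)=\E[(f,Z_t)]$ by taking expectations in \eqref{E:fZInfty}, and then to prove that $\nu_t$ extends to a bounded linear functional on $C^0(\S)$ with respect to $\|\cdot\|_\infty$. Since $\S=\K\times\A$ is compact and $C^\infty(\S)\subset W^4$ is $\|\cdot\|_\infty$-dense in $C^0(\S)$, the Riesz representation theorem then identifies this extension with integration against a unique finite signed measure, which is the assertion. Writing the drift in \eqref{E:fZInfty} as $A_u(f)+(L^\infty_{\bar S_u}f,Z_u)$, where $A_u(f):=\big(-\partial_S h^\infty_{\bar S_u}(Z_u)f+\sum_{i\in\K}f(i,0)\,\partial_S n^{i,\infty}_{\bar S_u}(Z_u),\bar S_u\big)$ and $L^\infty$ is as in \eqref{E:Linfty}, the obstruction is immediate: the second, transport, term $(L^\infty_{\bar S_u}f,Z_u)$ contains $(f',Z_u)$ together with the multiplications $h^\infty f$ and $f(i,0)n^{i,\infty}$, none of which can be bounded by $\|f\|_\infty$. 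These must be removed before taking expectations.

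To eliminate the transport term I would introduce the backward evolution family $(V(t,u))_{0\le u\le t}$ generated by $L^\infty_{\bar S_\cdot}$: for fixed $t$ and final datum $f$, let $v_u=V(t,u)f$ solve $\partial_u v_u=-L^\infty_{\bar S_u}v_u$ on $[0,t]$ with $v_t=f$. Because $\bar S$ is the deterministic limit of Theorem \ref{T:LLN} with coefficients that are bounded and, by (A2), $C^4$ in $s$, this linear bounded-coefficient transport equation is solvable by characteristics (for the $v_u'$ part) and Duhamel's formula (for the zeroth-order and boundary terms), and it preserves the CLT scale of spaces. Feeding the time-dependent test function $v_u$ into the time-dependent form of the fluctuation equation (derived exactly as \eqref{E:fZInfty}, but allowing $f$ to depend on $u$ and thus carrying an extra $(\partial_u v_u,Z_u)$ term) produces by construction the exact cancellation $(\partial_u v_u,Z_u)+(L^\infty_{\bar S_u}v_u,Z_u)=0$, so that
\[
(f,Z_t)=(V(t,0)f,Z_0)+\int_0^t A_u(V(t,u)f)\,du+\int_0^t (v_u,dM_u).
\]
The stochastic integral is the measure-valued martingale of Remark \ref{R:MeasureM}; under (C0), (A0), (A1) its predictable quadratic variation is integrable, so it is a genuine mean-zero martingale and, taking expectations,
\[
\nu_t(f)=(V(t,0)f,Z_0)+\int_0^t \E\big[A_u(V(t,u)f)\big]\,du.
\]

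Two estimates then close the argument. First, the hypotheses on $g^h$ and $g^{n,i}$ let me collapse $A_u$ into a single $W^{-4}$-pairing: using $\partial_S h^\infty_{\bar S_u}(Z_u)(s)=(g^h(\bar S_u,s,\cdot),Z_u)$, pulling the constant $g(i,0)$ out of the $n$-term, and applying Fubini for the finite measure $\bar S_u$,
\[
A_u(g)=(G_u(g),Z_u),\qquad G_u(g)=-\int_\S g(s)\,g^h(\bar S_u,s,\cdot)\,\bar S_u(ds)+\sum_{i\in\K}g(i,0)\int_\S g^{n,i}(\bar S_u,s,\cdot)\,\bar S_u(ds),
\]
with $G_u(g)\in W^4$ and $\|G_u(g)\|_{W^4}\le c\,\|g\|_\infty\,(1,\bar S_u)$, the constant coming from $\sup_{S,s}\|g^h(S,s,\cdot)\|_{W^4}$, $\sup_{S,s}\|g^{n,i}(S,s,\cdot)\|_{W^4}$ and $|g(i,0)|\le\|g\|_\infty$; hence $|A_u(g)|\le c\,\|g\|_\infty(1,\bar S_u)\,\|Z_u\|_{W^{-4}}$. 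Second, the backward flow is $\|\cdot\|_\infty$-bounded: the maximum-principle/Duhamel estimate for $v_u$ gives $\sup_{0\le u\le t}\|V(t,u)f\|_\infty\le c_t\|f\|_\infty$. Combining these, and using $\sup_{u\le T}(1,\bar S_u)<\infty$ (finite-mass LLN limit) together with $\sup_{u\le T}\E\|Z_u\|_{W^{-4}}<\infty$ (the uniform moment bound from the CLT tightness argument, passed to the limit by Fatou), bounds the integral term by $C_t\|f\|_\infty$. The boundary term satisfies $|(V(t,0)f,Z_0)|\le c_t\|f\|_\infty\,\|Z_0\|_{\mathrm{TV}}$ provided $Z_0$ is itself a signed measure, which is forced by the $t=0$ case of the statement and which I take as part of the standing assumption on the (deterministic) initial data. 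Together these yield $|\nu_t(f)|\le C_t'\|f\|_\infty$ for all $f\in W^4$, and Riesz representation finishes the proof.

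The main obstacle is the backward-propagator step: one must construct $V(t,u)$ on the CLT scale of spaces, justify that substituting $v_u$ into the time-dependent fluctuation equation is legitimate so that the $L^\infty$ drift cancels exactly, and, crucially, establish the sup-norm bound $\|V(t,u)f\|_\infty\le c_t\|f\|_\infty$, since it is precisely this bound that converts the $\|f\|_{W^4}$-type control available for the transport term into the $\|f\|_\infty$-control required of a measure. By contrast, the structural estimate from $g^h,g^{n,i}$ and the integrability of $\E\|Z_u\|_{W^{-4}}$ are comparatively routine once the flow is in hand.
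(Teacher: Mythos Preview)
Your proposal is correct, and the overall strategy—remove the transport derivative by passing to a time-dependent test function, then show the remaining functional is bounded by $\|f\|_\infty$—is the same as the paper's. The difference is one of economy. The paper does not build the full backward evolution family $V(t,u)$ for $L^\infty_{\bar S_\cdot}$; it uses only the explicit age shift $\varTheta_{t-u}\phi(i,v)=\phi(i,v+t-u)$, already available from the alternative representation \eqref{E:phiZInfty}. Taking expectations there yields a closed equation for $\nu_t$ in which the only surviving $\nu_u$-pairing is $\int_0^t(\widehat L^\infty_{\bar S_u}\varTheta_{t-u}\phi,\nu_u)\,du$. Since $\widehat L^\infty$ is zeroth order with $\|\widehat L^\infty_{\bar S_u}\varTheta_{t-u}\phi\|_\infty\le c\|\phi\|_\infty$ and (by (A2)) $\widehat L^\infty_{\bar S_u}\varTheta_{t-u}\phi\in W^4$, a Gronwall/Picard iteration on $\sup\{|(\psi,\nu_t)|:\psi\in W^4,\ \|\psi\|_\infty\le1\}$ closes the estimate, after which density of $C^\infty$ in $C^0$ and Riesz finish. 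Your full backward flow kills this residual term as well and delivers a direct bound $|\nu_t(f)|\le C_t\|f\|_\infty$ without a second Gronwall, which is tidier, but you pay for it by having to construct $V(t,u)$ and prove the sup-norm stability $\|V(t,u)f\|_\infty\le c_t\|f\|_\infty$—exactly the overhead you flag, and exactly what the paper avoids by using the explicit shift. Both routes exploit the hypothesis on $g^h$ and $g^{n,i}$ in the same way, converting the Fr\'echet-derivative term into one controlled by $\|f\|_\infty\cdot\|\nu_u\|_{W^{-4}}$, with the $W^{-4}$ bound on $\nu_u$ obtained separately.
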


We give the proofs in Appendices; that of Theorem \ref{T:LLN} in Section \ref{S:LLNProof}, that of Theorem \ref{T:CLT} in Section \ref{S:CLTProof}, and that of Proposition \ref{P:EZtMeasure} in Section \ref{S:LastProof}.

Loosely speaking, the Law of Large Numbers gives the first order approximation to the population when it is large; the Central Limit Theorem gives the second order approximation. 
Quantities of interest, such as the population size or the number of individuals of certain type within certain age interval, can be computed simply by choosing the appropriate test functions. 
In specific cases, where explicit model parameters are available, more information can be obtained and inference be made from observations. 

\subsubsection{Applications: sexual reproduction, revisited} \label{S:ModelApplic2}

We continue from Section \ref{S:ModelApplic1}, 
revising notation slightly, writing parameters in the form $q^K_{\bar S}(s)$. 
Suppose that all the conditions (C0)-(C3) and (A0)-(A4) are satisfied, 
so that the Law of Large Numbers and Central Limit Theorem hold. 
For example, $q$ may take the form $q\big(i,v,(1,\bar S),(g,\bar S)\big)$ with $g(i,v) = \mathbf{1}_{i=\F}$. 
Then, the limiting parameters have the same form and the Fr\'echet derivatives are
$$\partial_S q^\infty_\mu(B)(i,v) = \partial_3 q\big(i,v,(1,\mu),(g,\mu)\big)(1,B) + \partial_4 q\big(i,v,(1,\mu),(g,\mu)\big)(g,B).$$
In this case, the conditions on the parameters are satisfied if 
$\sup \partial_2^j q(i,x,y,z) <\infty$ for $j=0,1,\dots,4$, $\sup \partial_3 q(i,x,y,z) <\infty$ and $\sup \partial_4 q(i,x,y,z) <\infty$, where the supremum is taken over all $i,x,y,z$. 

The limiting population density of \eqref{E:fSInfty} can be
decomposed into  two equations, for the limiting female and male measures:
\begin{align*}
(f_\F,\bar S^\F_t) &= (f_\F,\bar S^\F_0) + \int_0^t \big(f'_\F-h^\infty_{\bar S_u}(\F,\cdot)f_\F+f_\F(0)\mathfrak n^{\F,\infty}_{\bar S_u}, \bar S^\F_u\big) du; \\
(f_\M,\bar S^\M_t) &= (f_\M,\bar S^\M_0) + \int_0^t \big(f'_\M-h^\infty_{\bar S_u}(\M,m\cdot)f_\M, \bar S^\M_u\big) du + f_\M(0) \int_0^t \big(\mathfrak n^{\M,\infty}_{\bar S_u}, \bar S^\F_u\big) du. 
\end{align*}
In the case where these measures have densities (with respect to Lebesgue measure), 
namely $s^\F(v,t)$ for $\bar S^\F_t$ and $s^\M(v,t)$ for $\bar S^\M_t$, 
the densities resemble the McKendrick-von Foerster equations, 
as pointed out in \cite{JagKle16}: 
\begin{gather*}
\Big(\frac{\partial}{\partial t}+\frac{\partial}{\partial v}\Big) s^\F(v,t) = -h^\infty_{\bar S_t}(\F,v)s^\F(v,t), \q
s^\F(0,t) = \int_0^{t+a^*} \mathfrak n^{\F,\infty}_{\bar S_t}(v) s^\F(v,t) dv, \\
\Big(\frac{\partial}{\partial t}+\frac{\partial}{\partial v}\Big) s^\M(v,t) = -h^\infty_{\bar S_t}(\M,v)s^\M(v,t), \q
s^\M(0,t) = \int_0^{t+a^*} \mathfrak n^{\M,\infty}_{\bar S_t}(v) s^\F(v,t) dv.
\end{gather*}
The upper limit in these integrals is due to $s^\F(v,t)=0$ for $v>t+a^*$. 

The fluctuation limit \eqref{E:fZInfty} can be decomposed as follows,
with $Z^\F(dv) = Z(\{\F\},dv)$ and $Z^\M(dv) = Z(\{\M\},dv)$,
\begin{align*}
(f_\F,Z^\F_t) &= (f_\F,Z^\F_0)
+ \int_0^t \big( - \partial_S h^\infty_{\bar{S}_u}(Z_u)(\F,\cdot) f_\F + f_\F(0) \partial_S \mathfrak n^{\F,\infty}_{\bar{S}_u}(Z_u) , \bar{S}^\F_u \big) du \\
&\q\q\q+ \int_0^t \big( f'_\F - h^\infty_{\bar{S}_u}(\F,\cdot) f_\F + f_\F(0) \mathfrak n^{\F,\infty}_{\bar{S}_u} , Z^\F_u \big) du + \tilde{M}_t^\F,
\end{align*}
where
$$\big<\tilde M^\F\big>_t = \int_0^t \big(
f_\F(0)^2 \mathfrak w^{\F\F,\infty}_{\bar S_u} + h^\infty_{\bar S_u}(\F,\cdot) f_\F^2
- 2 f_\F(0) h^\infty_{\bar S_u}(\F,\cdot) \widehat{\mathfrak m}^{\F,\infty}_{\bar S_u} f_\F, \bar S^\F_u \big) du; $$
and
\begin{multline*}
(f_\M,Z^\M_t) = (f_\M,Z^\M_0)
+ \int_0^t \big( - \partial_S h^\infty_{\bar{S}_u}(Z_u)(\M,\cdot) f_\M, \bar{S}_u^\M \big) du \\
+ f_\M(0) \int_0^t \big(\partial_S \mathfrak n^{\M,\infty}_{\bar{S}_u}(Z_u) , \bar{S}_u^\F \big) du
+ \int_0^t \big( f'_\M - h^\infty_{\bar{S}_u}(\M,\cdot) f_\M, Z^\M_u \big) du \\
+ f_\M(0) \int_0^t \big(\mathfrak n^{\M,\infty}_{\bar{S}_u} , Z^\F_u \big) du + \tilde{M}_t^\M,
\end{multline*}
where
$$\big<\tilde M^\M\big>_t = \int_0^t \big( f_\M(0)^2 \mathfrak w^{\M\M,\infty}_{\bar S_u}, \bar S^\F_u \big) du
+ \int_0^t \big( h^\infty_{\bar S_u}(\M,\cdot) f_\M^2, \bar S^\M_u \big) du.$$

The McKendrick-von Foerster equations in their turn may take many special
forms, in different situations, dependent upon the varying
fertilisation, reproduction and survival patterns, as well as the role
of the carrying capacity type parameter. It is, however,
important that such modelling be done in close relationship to the
biological circumstances at hand. In this broad context, we contend
ourselves with the remark that typically female fertility will
increase with the abundance of males while mortality on the whole will tend to
increase with competition and, thus, population size. 
We shall not enter into the intricate details of this, like different forms of Allee effects about which there is a rich literature, e.g. \cite{EngLanSae03} and \cite{Gil04}. 

\subsection{With migration} \label{S:Migration}

Migration can be easily incorporated into the model. While emigrations can be seen and taken as deaths, immigrations can be formulated as individuals of different ages and types arriving at random times. Henceforth, immigrations will be our concern. In the case of controlled immigration, that is, when the rate of immigration and the number of immigrants per arrival are bounded, we can show that the limits for the Law of Large Numbers and Central Limit Theorem coincide with those without immigration. 

Suppose that immigrations occur at random times $\tilde\tau_1, \tilde\tau_2, \dots$ with rate $g(S_u,u)$, depending on the population composition and time. 
At time $\tilde\tau_j$, $\zeta_j$ new immigrants arrive with $\E[\zeta_j| \tilde\tau_j, S_{\tilde\tau_j}] = \tilde m(S_{\tilde\tau_j},\tilde\tau_j)$ and $\E[\zeta_j^2| \tilde\tau_j, S_{\tilde\tau_j}] = \tilde v(S_{\tilde\tau_j},\tilde\tau_j)$. 
The type and age of each immigrant is random, distributed according to some distribution $\tilde{\mathcal{K}}(\cdot; S_{\tilde\tau_j}, \tilde\tau_j)$ conditioning on the arrival time and the population structure at that time. 
In other words, if $x$ is an individual who immigrated at time $u$, denote its type and age at immigration by $\kappa_x$ and $\tilde\alpha_x$ respectively, then $\P(\kappa_x=i,\tilde\alpha_x\in A) = \int_{\{i\}\times A} \tilde{\mathcal{K}}(ds;S_u,u)$. 

Let $G((i,v),t)$ be the number of immigrants by time $t$ with type $i$ and age at immigration not more than $v$. Then, the equation analogous to \eqref{E:BasicEvol} is 
\begin{multline*}
(f,S_t) = (f,S_0) + \int_0^t (f',S_u) du + \sum_{i\in\K} f(i,0)B^i([0,t]) \\
- \int_{\S\times[0,t]} f(s) D(ds,du) + \int_{\S\times[0,t]} f(s) G(ds,du).
\end{multline*}
Compensating the above, we obtain a dynamical equation in the form of a semimartingale decomposition: 
\begin{multline*}
(f,S_t) = (f,S_0) + \int_0^t (L_{S_u}f,S_u) du \\ 
+ \int_0^t g(S_u,u) \tilde m(S_u,u) \int_{\S} f(s) \tilde{\mathcal{K}}(ds;S_u,u) du + \dddot M^f_t,
\end{multline*}
where $L_Sf$ is as in \eqref{E:fSt}
and $\dddot M^f_t$ is a locally square integrable martingale with predictable quadratic variation
\begin{align*}
\big<\dddot M^f\big>_t &= \int_0^t \bigg(
\sum_{i_1\in\K} \sum_{i_2\in\K} f(i_1,0)f(i_2,0) w^{i_1i_2}_{S_u} 
+ h_{S_u} f^2 - 2 \sum_{i\in\K} f(i,0) h_{S_u} \widehat m^i_{S_u} f , S_u \bigg) du \\
&\q\q+ \int_0^t g(S_u,u) \bigg\{ \tilde m(S_u,u) \int_\S f^2(s) \tilde{\mathcal{K}}(ds;S_u,u) \\
&\q\q\q\q\q\q\q\q+ \big(\tilde v(S_u,u) - \tilde m(S_u,u) \big) \Big(\int_S f(s) \tilde{\mathcal{K}}(ds;S_u,u) \Big)^2 \bigg\} du.
\end{align*}

Suppose that $g$, $\tilde m$ and $\tilde v$ are bounded. Then the Law of Large Numbers and the Central Limit Theorem hold, and the limits are the same as the corresponding limits in the case without immigration.


\section{Sexual reproduction in serial monogamy mating system} \label{S:Monogamy}

Our framework can be applied immediately to model sexual reproduction, as in Sections \ref{S:ModelApplic1} and \ref{S:ModelApplic2}. 
However, the formulation sketched there does not take the form of mating into account. 
In this section, we adapt our framework to model a population with serial monogamy. 
In other words, a female and a male form a couple for life or for some time, like a breeding season.

\subsection{The model} \label{S:Monogamy-model}

Despite not restricting the model to human population, for the convenience of terminology, we refer to the formation of a couple as a ``marriage''. 
Consider a population consisting of individuals of three types; type 1 (denoted as {\Female}) refers to single females, type 2 (denoted as {\Male}) refers to single males and type 3 (denoted as {\FemaleMale}) refers to ``married'' couples. 
An individual from type {\F} and an individual from type {\M} can form a couple (get married) and becomes a type {\FM}. 
A couple lasts a period of random length interrupted by the death of one of the mates, in which case, the survivor becomes single and available for mating again. 

A type {\FM} gives birth at random times to random number of type {\F} and type {\M} offspring. 
We assume also the possibility of a type {\F} to give birth. 
The lifetime of each individual is random. 

Let {\FEMALE} (resp. {\MALE}) denote the set of all females (resp. males), single or married. 
Let $c_{x,y}$ denote the time individuals $x$ and $y$ become a couple, 
$c_x^j$ denote the time of the $j$th ``marriage'' of individual $x$ 
and $d_x^j$ denote the time $x$ becomes a ``widow'' from its $j$th marriage. 
To accomodate the notation for couples, the age of which consists of both that of the female partner and the male partner, we assign also an ``age'' of two indices to each individual that is single, with one index taking value $\infty$. 
Then, the age structure at time $t$ of the three types are given by 
\begin{align*}
S^\F_t(di,dv,dw) &= \sum_{x\in \FEMALE} \mathbf{1}_{\tau_x\le t\le \sigma_x} \big( \mathbf{1}_{t\le c_x^1} + \mathbf{1}_{d_x^1\le t\le c_x^2} + \mathbf{1}_{d_x^2\le t\le c_x^3} + \cdots \big) \delta_{(\F,t-\tau_x,\infty)}, \\
S^\M_t(di,dv,dw) &= \sum_{y\in \MALE} \mathbf{1}_{\tau_y\le t\le \sigma_y} \big( \mathbf{1}_{t\le c_y^1} + \mathbf{1}_{d_y^1\le t\le c_y^2} + \mathbf{1}_{d_y^2\le t\le c_y^3} + \cdots \big) \delta_{(\M,\infty,t-\tau_y)}, \\
S^\FM_t(di,dv,dw) &= \sum_{x\in \FEMALE} \sum_{y\in \MALE} \mathbf{1}_{\tau_x\le t\le \sigma_x} \mathbf{1}_{\tau_y\le t\le \sigma_y} \mathbf{1}_{c_{x,y}\le t} \delta_{(\FM,t-\tau_x,t-\tau_y)},
\end{align*}
and the structure of the entire population 
$$S_t(di,dv,dw) = S^\F_t(di,dv,dw) + S^\M_t(di,dv,dw) + S^\FM_t(di,dv,dw).$$

Let $b_S(v,w)$ denote the bearing rate of a couple with a female at age $v$ and a male at age $w$, when the population composition is $S$. 
At each birth, the number of females and the number of males born have the distributions of $\xi^\F_S(v,w)$ and $\xi^\M_S(v,w)$ respectively. Write $m^i_S(v,w) = \E[\xi^i_S(v,w)]$ and $\gamma^{ij}_S(v,w) = \E[\xi^i_S(v,w)\xi^j_S(v,w)]$. 
These quantities with $w=\infty$ correspond to those of single females. 

Let $\rho_S(v,w)$ be the rate at which a single female of age $v$ and a single male of age $w$ marry to each other. 
In other words, for $x\in\FEMALE$ and $y\in\MALE$, with $d_x^0 := 0 =: d_y^0$, 
$$\mathbf{1}_{c_{x,y}\le t} - \int_0^{t\wedge c_{x,y}} \rho_{S_u}(u-\tau_x,u-\tau_y) \mathbf{1}_{\tau_x\le u<\sigma_x} \mathbf{1}_{\tau_y\le u<\sigma_y} \sum_{j\ge0} \mathbf{1}_{d_x^j\le u\le c_x^{j+1}} \sum_{k\ge0} \mathbf{1}_{d_y^k\le u\le c_y^{k+1}} du$$
is a martingale. 
Suppose that marriage lasts for a random length, at rate $h^\FM_S(v,w)$ it breaks, dependent upon the ages of the mates as well as the population composition.

Let $h^\F_S(v,w)$ denote the death rate of a female when she is at age $v$ and her partner is at age $w$, when the population composition is $S$. 
Similarly, let $h^\M_S(v,w)$ denote the death rate of a male when he is at age $w$ and his partner is at age $v$, when the population composition is $S$. 
For single females and males, their death rates are $h^\F_S(v,\infty)$ and $h^\M_S(\infty,w)$, respectively. 

We consider test functions $f: \K \times \A\cup\{\infty\} \times \A\cup\{\infty\} \to \R$, 
with $\K = \{1,2,3\} \equiv \{\F,\M,\FM\}$,  
such that $f(3,v,w) \equiv f(\FM,v,w)$, $f(1,v,w) \equiv f(\F,v,w) = f(\F,v,\infty) = f_\F(v)$ and $f(2,v,w) \equiv f(\M,v,w) = f(\M,\infty,w) = f_\M(w)$, for any $v,w \in\A$, for some functions $f_\F$ and $f_\M$. 
We assume that $f(i,v,w)$ is differentiable with respect to $v$ and $w$, and $\partial_w f(\F,v,\infty) = \partial_v f(\M,\infty,w) = 0$. 

Then, it can be shown that (see Appendix \ref{S:Appendix-Monogamy-model}) $S$ has the following semimartingale representation\footnote{We write $((f(\ast,\Cdot),\mu)_{\Cdot},\nu)_\ast$ to mean $\int \int f(x,y)\mu(dy)\nu(dx) = (f,\nu\otimes\mu)$.}: 
\begin{align} \label{E:semimartB}
\notag&(f,S_t) = (f,S_0) + \int_0^t (\partial_v f + \partial_w f, S_u) du 
- \int_0^t (f h^\F_{S_u} \mathbf{1}_\F + f h^\M_{S_u} \mathbf{1}_\M, S_u) du \\
\notag&\q+ \int_0^t \bigg(\Big(\big(f(\M,\infty,\cdot)-f(\FM,\cdot,\cdot)\big) h^\F_{S_u} + \big(f(\F,\cdot,\infty)-f(\FM,\cdot,\cdot)\big) h^\M_{S_u} \Big) \mathbf{1}_\FM, S_u\bigg) du \\
\notag&\q+ \int_0^t \Big(\big(f(\F,\cdot,\infty)+f(\M,\infty,\cdot)-f(\FM,\cdot,\cdot)\big) h^\FM_{S_u} \mathbf{1}_\FM, S_u\Big) du \\
\notag&\q+ \int_0^t ( [f(\F,0,\infty)b_{S_u}m^\F_{S_u} + f(\M,\infty,0)b_{S_u}m^\M_{S_u}] (\mathbf{1}_\FM+\mathbf{1}_\F), S_u) du \\
\notag&\q- \int_0^t (( [f(\F,\Cdot,\infty)+f(\M,\infty,\ast)-f(\FM,\Cdot,\ast)] \rho_{S_u}(\Cdot,\ast) \mathbf{1}_\F, S_u)_{\Cdot} \mathbf{1}_\M, S_u)_\ast du \\
&\q+ M^f_t,
\end{align}
where $M^f$ is a martingale with predictable quadratic variation 
\begin{align*}
&\left<M^f\right>_t 
= \int_0^t (f^2 h^\F_{S_u} \mathbf{1}_\F + f^2 h^\M_{S_u} \mathbf{1}_\M, S_u) du \\
&\q+ \int_0^t \bigg(\Big( [f(\F,\cdot,\infty)-f(\FM,\cdot,\cdot)]^2 h^\M_{S_u} + [f(\M,\infty,\cdot)-f(\FM,\cdot,\cdot)]^2 h^\F_{S_u} \Big) \mathbf{1}_\FM, S_u\bigg) du\\
&\q+ \int_0^t \Big(\big(f(\F,\cdot,\infty)+f(\M,\infty,\cdot)-f(\FM,\cdot,\cdot)\big)^2 h^\FM_{S_u} \mathbf{1}_\FM, S_u\Big) du \\
&\q+ \int_0^t ( [f^2(\F,0,\infty)\gamma^{\F\F}_{S_u} + f^2(\M,\infty,0)\gamma^{\M\M}_{S_u} + 2 f(\F,0,\infty)f(\M,\infty,0)\gamma^{\F\M}_{S_u}] b_{S_u} (\mathbf{1}_\FM+\mathbf{1}_\F), S_u) du \\
&\q+ \int_0^t (( [f(\F,\Cdot,\infty)+f(\M,\infty,\ast)-f(\FM,\Cdot,\ast)]^2 \rho_{S_u}(\Cdot,\ast) \mathbf{1}_\F, S_u)_{\Cdot} \mathbf{1}_\M, S_u)_\ast du. 
\end{align*}

Considering a family of the population processes indexed by $K\ge1$ and writing the reproduction parameters in the form $q^K_{S/K}(\cdot)$, we obtain the asymptotics as $K\to\infty$. 

\subsection{Law of Large Numbers}

Let $\bar S^K_t = S^K_t/K$. Then the scaled process satisfies 
\begin{align*}
&(f,\bar S^K_t) = (f,\bar S^K_0) + \int_0^t \big(\partial_v f + \partial_w f, \bar S^K_u\big) du 
- \int_0^t \big(f h^{\F,K}_{\bar S^K_u} \mathbf{1}_\F + f h^{\M,K}_{\bar S^K_u} \mathbf{1}_\M, \bar S^K_u\big) du \\
&\q+ \int_0^t \Big(\Big[\big(f(\M,\infty,\cdot)-f(\FM,\cdot,\cdot)\big) h^{\F,K}_{\bar S^K_u} + \big(f(\F,\cdot,\infty)-f(\FM,\cdot,\cdot)\big) h^{\M,K}_{\bar S^K_u} \Big] \mathbf{1}_\FM, \bar S^K_u\Big) du \\
&\q+ \int_0^t \Big(\big(f(\F,\cdot,\infty)+f(\M,\infty,\cdot)-f(\FM,\cdot,\cdot)\big) h^{\FM,K}_{\bar S^K_u} \mathbf{1}_\FM, \bar S^K_u\Big) du \\
&\q+ \int_0^t \Big( \big[f(\F,0,\infty)m^{\F,K}_{\bar S^K_u} + f(\M,\infty,0)m^{\M,K}_{\bar S^K_u}\big] b^K_{\bar S^K_u} (\mathbf{1}_\FM+\mathbf{1}_\F), \bar S^K_u\Big) du \\
&\q- \int_0^t \Big(\Big( [f(\F,\Cdot,\infty)+f(\M,\infty,\ast)-f(\FM,\Cdot,\ast)] K \rho^K_{\bar S^K_u}(\Cdot,\ast) \mathbf{1}_\F, \bar S^K_u\Big)_{\Cdot} \mathbf{1}_\M, \bar S^K_u\Big)_\ast du \\
&\q+ \bar M^{f,K}_t,
\end{align*}
with
\begin{align*}
&\left<\bar M^{f,K}\right>_t = \int_0^t \frac1K \big(f^2 h^{\F,K}_{\bar S^K_u} \mathbf{1}_\F + f^2 h^{\M,K}_{\bar S^K_u} \mathbf{1}_\M, \bar S^K_u\big) du \\
&\q+ \int_0^t \frac1K \Big(\Big[ \big(f(\M,\infty,\cdot)-f(\FM,\cdot,\cdot)\big)^2 h^{\F,K}_{\bar S^K_u} + \big(f(\F,\cdot,\infty)-f(\FM,\cdot,\cdot)\big)^2 h^{\M,K}_{\bar S^K_u} \Big] \mathbf{1}_\FM, \bar S^K_u\Big) du\\
&\q+ \int_0^t \frac1K \Big(\big(f(\F,\cdot,\infty)+f(\M,\infty,\cdot)-f(\FM,\cdot,\cdot)\big)^2 h^{\FM,K}_{\bar S^K_u} \mathbf{1}_\FM, \bar S^K_u\Big) du\\
&\q+ \int_0^t \frac1K \Big( \big[f^2(\F,0,\infty)\gamma^{\F\F,K}_{\bar S^K_u} + f^2(\M,\infty,0)\gamma^{\M\M,K}_{\bar S^K_u} + 2 f(\F,0,\infty)f(\M,\infty,0)\gamma^{\F\M,K}_{\bar S^K_u}\big] b^K_{\bar S^K_u} (\mathbf{1}_\FM+\mathbf{1}_\F), \bar S^K_u\Big) du \\
&\q+ \int_0^t \big(\big( [f(\F,\Cdot,\infty)+f(\M,\infty,\ast)-f(\FM,\Cdot,\ast)]^2 \rho^K_{\bar S^K_u}(\Cdot,\ast) \mathbf{1}_\F, \bar S^K_u\big)_{\Cdot} \mathbf{1}_\M, \bar S^K_u\big)_\ast du. 
\end{align*}

For the convergence of the sequence of processes $\bar S^K$, we need conditions like (C0) to (C3), with conditions on $\rho$ too. 

\begin{enumerate}
\item [(C0')]
In addition to (C0), $K\rho^K$ is bounded. 
\item [(C1')]
(C1) holds also for $q^K$ being $K\rho^K$.
\item [(C2')]
(C2) holds, and $\lim_{K\to\infty} K\rho^K_\mu =: \rho^\infty_\mu$. 
\item [(C3')]
Same as (C3).
\end{enumerate}

\begin{theorem} \label{T:MonogamyLLN}
Under the smooth demography conditions (C0')-(C3'), 
the scaled process $\bar S^K$ converges weakly in the Skorokhod space $\D(\T,\mathcal{M}(\S))$, as $K\to\infty$, to a deterministic measure-valued process $\bar S$, satisfying
\begin{align}
\notag&(f,\bar S_t) = (f,\bar S_0) + \int_0^t \big(\partial_v f + \partial_w f, \bar S_u\big) du 
- \int_0^t \big(f h^{\F,\infty}_{\bar S_u} \mathbf{1}_\F + f h^{\M,\infty}_{\bar S_u} \mathbf{1}_\M, \bar S_u\big) du \\
\notag&\q+ \int_0^t \Big(\Big[\big(f(\M,\infty,\cdot)-f(\FM,\cdot,\cdot)\big) h^{\F,\infty}_{\bar S_u} + \big(f(\F,\cdot,\infty)-f(\FM,\cdot,\cdot)\big) h^{\M,\infty}_{\bar S_u} \Big] \mathbf{1}_\FM, \bar S_u\Big) du \\
\notag&\q+ \int_0^t \Big(\big(f(\F,\cdot,\infty)+f(\M,\infty,\cdot)-f(\FM,\cdot,\cdot)\big) h^{\FM,\infty}_{\bar S_u} \mathbf{1}_\FM, \bar S_u\Big) du \\
\notag&\q+ \int_0^t \Big( \big[f(\F,0,\infty)m^{\F,\infty}_{\bar S_u} + f(\M,\infty,0)m^{\M,\infty}_{\bar S_u}\big] b^\infty_{\bar S_u} (\mathbf{1}_\FM+\mathbf{1}_\F), \bar S_u\Big) du \\
&\q- \int_0^t \Big(\Big( \big[f(\F,\Cdot,\infty)+f(\M,\infty,\ast)-f(\FM,\Cdot,\ast)\big] \rho^\infty_{\bar S_u}(\Cdot,\ast) \mathbf{1}_\F, \bar S_u\Big)_{\Cdot} \mathbf{1}_\M, \bar S_u\Big)_\ast du. \label{E:SbarInftyMonogamy}
\end{align}
\end{theorem}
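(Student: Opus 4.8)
The plan is to run the same three-step program used for Theorem~\ref{T:LLN} --- a uniform mass estimate, tightness, and identification of the limit points together with uniqueness of the limit equation --- modifying each step to accommodate the one genuinely new feature, the nonlinear marriage term carrying the factor $K\rho^K$. Throughout I would use that the state space $\S=\K\times(\A\cup\{\infty\})^2$ is compact and that, by (C0'), all coefficients $b^K$, $h^{\F,K}$, $h^{\M,K}$, $h^{\FM,K}$, $m^{i,K}$, $\gamma^{ij,K}$, and crucially $K\rho^K$, are bounded uniformly in $K$, in the state argument, and in $\mu$.

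First I would establish a uniform mass bound and then tightness. Counting each couple as a single type-$\FM$ unit, i.e. testing against $f\equiv 1$, every intensity in the drift of $(1,\bar S^K_t)$ is bounded, and only the birth and break-up terms increase the total mass while the marriage term merely redistributes it among the three types; in particular that term, being a double integral against $\bar S^K_u\otimes\bar S^K_u$ with bounded integrand $K\rho^K$, is $O(1)$ by the scaling built into (C2'). Hence the drift is dominated by $C(1,\bar S^K_u)$, and a Gronwall argument with (C3') gives $\sup_K\E\big[\sup_{t\le T}(1,\bar S^K_t)\big]<\infty$. Since $\S$ is compact, closed balls of bounded total mass in $\mathcal M(\S)$ are weakly compact, so this yields the compact containment condition; for each $f\in C^1$ the real process $(f,\bar S^K_\cdot)$ is tight by the Aldous--Rebolledo criterion, as $\langle\bar M^{f,K}\rangle_T$ carries the prefactor $1/K$ with bounded integrands and is therefore $O(1/K)$, while the finite-variation part has increments controlled uniformly in $K$ by the bounded coefficients and the mass bound. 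Jakubowski's criterion, with the separating family $\{(f,\cdot):f\in C^1\}$, then gives tightness of $\bar S^K$ in $\D(\T,\mathcal M(\S))$.

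Next I would identify the limit and prove uniqueness. Along a convergent subsequence $\bar S^K\Rightarrow\bar S$, Doob's inequality sends $\bar M^{f,K}$ to zero uniformly on $\T$ since its quadratic variation is $O(1/K)$. In the drift, (C1') and (C2') let me replace $q^K_{\bar S^K_u}$ by $q^\infty_{\bar S_u}$ (uniform convergence $q^K\to q^\infty$ together with the Lipschitz estimate absorbing the measure argument), and continuity of the functionals $\mu\mapsto(g,\mu)$ in the weak topology passes the single-integral terms to the limit. The marriage term is the delicate one: being quadratic in the measure, I would write it as an integral against $\bar S^K_u\otimes\bar S^K_u$ with continuous bounded integrand $K\rho^K\to\rho^\infty$, and use that $\bar S^K_u\Rightarrow\bar S_u$ forces weak convergence of the product measures, after controlling the dependence of $\rho^K$ on the empirical measure through (C1'). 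This identifies every limit point as a solution of \eqref{E:SbarInftyMonogamy}. Uniqueness of \eqref{E:SbarInftyMonogamy} then follows from a Gronwall estimate for the difference of two solutions in the dual norm $\|\cdot\|$, the Lipschitz constants from (C1') bounding the linear terms and the bilinearity of the marriage term being linearised through the a priori mass bound; uniqueness forces all subsequential limits to coincide, giving weak convergence of the full sequence to the deterministic $\bar S$.

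I expect the main obstacle to be precisely this nonlinear marriage term, which is absent from Theorem~\ref{T:LLN}. It complicates all three steps at once: it couples the three type-subpopulations in the mass estimate, it requires convergence of products of measures rather than of a single measure in the identification step, and, most seriously, its quadratic dependence on $\bar S$ must be tamed in the uniqueness Gronwall argument, where the uniform mass bound is what allows the bilinear difference to be controlled linearly.
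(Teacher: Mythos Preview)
Your three-step outline matches the paper's approach, but there is a real gap in the first two steps that stems precisely from the feature you flag at the end. Because the marriage term is \emph{quadratic} in $\bar S^K$, both the finite-variation part $\bar V^{f,K}$ and the predictable quadratic variation $\langle\bar M^{f,K}\rangle$ contain integrands of order $(1,\bar S^K_u)^2$: the drift carries $K\rho^K$ (bounded by (C0')) integrated against $\bar S^K_u\otimes\bar S^K_u$, and the quadratic variation carries $\rho^K=O(1/K)$ against the same product. Your claim that the drift is ``dominated by $C(1,\bar S^K_u)$'' is true only after dropping the negative marriage term for $f\equiv 1$; for a generic test function $f$ that term has no sign, and the Aldous--Rebolledo increment bounds therefore require $\sup_K\E\big[\sup_{t\le T}(1,\bar S^K_t)^2\big]<\infty$, not just the first-moment bound you establish. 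The same second-moment control is needed to send $\langle\bar M^{f,K}\rangle_T$ to zero.

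The paper closes this gap by testing against $\mathbf{1}_\F+\mathbf{1}_\M+2\,\mathbf{1}_\FM$ instead of $1$: counting each couple as two people makes every coupling term (marriage, break-up, widowing) cancel \emph{exactly}, leaving a genuinely linear equation for the head-count $\bar X^K_t$ with no quadratic nonlinearity. From that one obtains both $\sup_K\E[\sup_t\bar X^K_t]<\infty$ and, by iterating on $(\bar X^K_t)^2$, also $\sup_K\E[\sup_t(\bar X^K_t)^2]<\infty$; since $(1,\bar S^K_t)\le\bar X^K_t$ this is enough for tightness. Your identification and uniqueness sketches are otherwise on track; for uniqueness the paper again passes through the shift representation $\phi\mapsto\widehat\Theta_{t-u}\phi$ (as in Proposition~\ref{P:AltRepS}) to eliminate $\partial_vf+\partial_wf$ before Gronwall, which you should make explicit since the dual norm $\|\cdot\|$ closes only against $C^0$ test functions, and the a~priori mass bound on $(1,\mathscr S^j_u)$ is indeed what linearises the bilinear marriage difference, exactly as you anticipate.
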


From this, we obtain a system of PDEs for the age densities of the three subpopulations. 
Suppose $a^\F(v,t)$, $a^\M(w,t)$ and $a^\FM(v,w,t)$ are the densities (in $v$ and $w$) of $\bar S^i_t$, $i=\F,\M,\FM$, respectively. Then the densities satisfy the following: 
\begin{align*}
&\partial_t a^\F(v,t) + \partial_v a^\F(v,t) = 
- h^{\F,\infty}_{\bar S_t}(v,\infty) a^\F(v,t) \\
&\q\q+ \int \big(h^{\M,\infty}_{\bar S_t}(v,w) + h^{\FM,\infty}_{\bar S_t}(v,w)\big) a^\FM(v,w,t) dw 
- \int \rho^\infty_{\bar S_t}(v,w) a^\M(w,t) dw a^\F(v,t), \\
&\partial_t a^\M(w,t) + \partial_w a^\M(w,t) = 
- h^{\M,\infty}_{\bar S_t}(\infty,w) a^\M(w,t) \\
&\q\q+ \int \big(h^{\F,\infty}_{\bar S_t}(v,w) + h^{\FM,\infty}_{\bar S_t}(v,w)\big) a^\FM(v,w,t) dv 
- \int \rho^\infty_{\bar S_t}(v,w) a^\F(v,t) dv a^\M(w,t), \\
&\partial_t a^\FM(v,w,t) + \partial_v a^\FM(v,w,t) + \partial_w a^\FM(v,w,t) = \\
&\q\q- \big(h^{\F,\infty}_{\bar S_t}(v,w) + h^{\M,\infty}_{\bar S_t}(v,w) + h^{\FM,\infty}_{\bar S_t}(v,w)\big) a^\FM(v,w,t) 
+ \rho^\infty_{\bar S_t}(v,w) a^\F(v,t)a^\M(w,t), 
\end{align*}
with boundary conditions
\begin{align*}
a^\F(0,t) &= \int\int m^{\F,\infty}_{\bar S_t}(v,w) b^\infty_{\bar S_t}(v,w) a^\FM(v,w,t) dvdw + \int m^{\F,\infty}_{\bar S_t}(v,\infty) b^\infty_{\bar S_t}(v,\infty) a^\F(v,t) dv, \\
a^\M(0,t) &= \int\int m^{\M,\infty}_{\bar S_t}(v,w) b^\infty_{\bar S_t}(v,w) a^\FM(v,w,t) dvdw + \int m^{\M,\infty}_{\bar S_t}(v,\infty) b^\infty_{\bar S_t}(v,\infty) a^\F(v,t) dv.
\end{align*}
This is comparable to that given by Fredrickson in \cite{Fre71}. 

\subsection{Central Limit Theorem}

Let $Z^K = \sqrt{K}(\bar S^K - \bar S)$.
Then, for any $f\in C^1$ and $t\in\T$, 
\begin{align} 
\notag&(f,Z_t^K) = (f,Z_0^K) + \int_0^t \big(\partial_v f + \partial_w f, Z^K_u\big) du \\
\notag&\q- \int_0^t \sqrt{K} \big(f (h^{\F,K}_{\bar S^K_u}-h^{\F,\infty}_{\bar S_u}) \mathbf{1}_\F + f (h^{\M,K}_{\bar S^K_u}-h^{\M,\infty}_{\bar S_u}) \mathbf{1}_\M, \bar S_u\big) du 
- \int_0^t \big(f h^{\F,K}_{\bar S^K_u} \mathbf{1}_\F + f h^{\M,K}_{\bar S^K_u} \mathbf{1}_\M, Z^K_u\big) du \\
\notag&\q+ \int_0^t \sqrt{K} \Big(\Big[\big(f(\M,\infty,\cdot)-f(\FM,\cdot,\cdot)\big) (h^{\F,K}_{\bar S^K_u}-h^{\F,\infty}_{\bar S_u}) + \big(f(\F,\cdot,\infty)-f(\FM,\cdot,\cdot)\big) (h^{\M,K}_{\bar S^K_u}-h^{\M,\infty}_{\bar S_u}) \Big] \mathbf{1}_\FM, \bar S_u\Big) du \\
\notag&\q+ \int_0^t \Big(\Big[\big(f(\M,\infty,\cdot)-f(\FM,\cdot,\cdot)\big) h^{\F,K}_{\bar S^K_u} + \big(f(\F,\cdot,\infty)-f(\FM,\cdot,\cdot)\big) h^{\M,K}_{\bar S^K_u} \Big] \mathbf{1}_\FM, Z^K_u\Big) du \\
\notag&\q+ \int_0^t \sqrt{K} \Big(\big(f(\F,\cdot,\infty)+f(\M,\infty,\cdot)-f(\FM,\cdot,\cdot)\big) (h^{\FM,K}_{\bar S^K_u}-h^{\FM,\infty}_{\bar S_u}) \mathbf{1}_\FM, \bar S_u\Big) du \\
\notag&\q+ \int_0^t \Big(\big(f(\F,\cdot,\infty)+f(\M,\infty,\cdot)-f(\FM,\cdot,\cdot)\big) h^{\FM,K}_{\bar S^K_u} \mathbf{1}_\FM, Z^K_u\Big) du \\
\notag&\q+ \int_0^t \sqrt{K} \Big( \big[f(\F,0,\infty) (b^K_{\bar S^K_u}m^{\F,K}_{\bar S^K_u} - b^\infty_{\bar S_u}m^{\F,\infty}_{\bar S_u}) + f(\M,\infty,0) (b^K_{\bar S^K_u}m^{\M,K}_{\bar S^K_u} - b^\infty_{\bar S_u}m^{\M,\infty}_{\bar S_u}) \big] (\mathbf{1}_\FM+\mathbf{1}_\F), \bar S_u\Big) du \\
\notag&\q+ \int_0^t \Big( \big[f(\F,0,\infty)m^{\F,K}_{\bar S^K_u} + f(\M,\infty,0)m^{\M,K}_{\bar S^K_u}\big] b^K_{\bar S^K_u} (\mathbf{1}_\FM+\mathbf{1}_\F), Z^K_u\Big) du \\
\notag&\q- \int_0^t \sqrt{K} \Big(\Big( [f(\F,\Cdot,\infty)+f(\M,\infty,\ast)-f(\FM,\Cdot,\ast)] (K \rho^K_{\bar S^K_u}(\Cdot,\ast) - \rho^\infty_{\bar S_u}(\Cdot,\ast)) \mathbf{1}_\F, \bar S_u\Big)_{\Cdot} \mathbf{1}_\M, \bar S_u\Big)_\ast du \\
\notag&\q- \int_0^t \Big(\Big( [f(\F,\Cdot,\infty)+f(\M,\infty,\ast)-f(\FM,\Cdot,\ast)] K \rho^K_{\bar S^K_u}(\Cdot,\ast) \mathbf{1}_\F, Z^K_u\Big)_{\Cdot} \mathbf{1}_\M, \bar S_u\Big)_\ast du \\
\notag&\q- \int_0^t \Big(\Big( [f(\F,\Cdot,\infty)+f(\M,\infty,\ast)-f(\FM,\Cdot,\ast)] K \rho^K_{\bar S^K_u}(\Cdot,\ast) \mathbf{1}_\F, \bar S_u\Big)_{\Cdot} \mathbf{1}_\M, Z^K_u\Big)_\ast du \\
&\q+ \tilde M^{f,K}_t, \label{E:fZKMonogamy}
\end{align}
where $\tilde{M}_t^{f,K}$ is a square integrable martingale with predictable quadratic variation
\begin{align*}
&\left<\tilde M^{f,K}\right>_t 
= \int_0^t \big(f^2 h^{\F,K}_{\bar S^K_u} \mathbf{1}_\F + f^2 h^{\M,K}_{\bar S^K_u} \mathbf{1}_\M, \bar S^K_u\big) du \\
&\q+ \int_0^t \Big(\Big[ \big(f(\M,\infty,\cdot)-f(\FM,\cdot,\cdot)\big)^2 h^{\F,K}_{\bar S^K_u} + \big(f(\F,\cdot,\infty)-f(\FM,\cdot,\cdot)\big)^2 h^{\M,K}_{\bar S^K_u} \Big] \mathbf{1}_\FM, \bar S^K_u\Big) du\\
&\q+ \int_0^t \Big(\big(f(\F,\cdot,\infty)+f(\M,\infty,\cdot)-f(\FM,\cdot,\cdot)\big)^2 h^{\FM,K}_{\bar S^K_u} \mathbf{1}_\FM, \bar S^K_u\Big) du\\
&\q+ \int_0^t \Big( \big[f^2(\F,0,\infty)\gamma^{\F\F,K}_{\bar S^K_u} + f^2(\M,\infty,0)\gamma^{\M\M,K}_{\bar S^K_u} + 2 f(\F,0,\infty)f(\M,\infty,0)\gamma^{\F\M,K}_{\bar S^K_u}\big] b^K_{\bar S^K_u} (\mathbf{1}_\FM+\mathbf{1}_\F), \bar S^K_u\Big) du \\
&\q+ \int_0^t \big(\big( [f(\F,\Cdot,\infty)+f(\M,\infty,\ast)-f(\FM,\Cdot,\ast)]^2 K\rho^K_{\bar S^K_u}(\Cdot,\ast) \mathbf{1}_\F, \bar S^K_u\big)_{\Cdot} \mathbf{1}_\M, \bar S^K_u\big)_\ast du. 
\end{align*}

For the Central Limit Theorem, we need to include the additional parameter $\rho$ to conditions (A0)--(A4). 

\begin{enumerate}
\item[(A0')]
Same as (A0).
\item[(A1')]
Same as (A1).
\item[(A2')]
In addition to (A2), the same holds for $q^K$ being $K\rho^K$ and $q^\infty$ being $\rho^\infty$. 
\item[(A3')]
(A3) holds for $q=b,h,m,\rho$. 
\item[(A4')]
Same as (A4).
\end{enumerate}

\begin{theorem} \label{T:CLTMonogamy}
Under the assumptions (C0')--(C3') and (A0')--(A4'), the process $(Z^K_t)_{t\in\T}$ converges weakly in $\mathbb{D}(\T,W^{-4})$ as $K\to\infty$ to the process $(Z_t)_{t\in\T}$ that satisfies, for $f\in W^4$ and $t\in\T$, 
\begin{align}
\notag&(f,Z_t) = (f,Z_0) + \int_0^t \big(\partial_v f + \partial_w f, Z_u\big) du \\
\notag&\q- \int_0^t \big(f \partial_Sh^{\F,\infty}_{\bar S_u}(Z_u) \mathbf{1}_\F + f \partial_Sh^{\M,\infty}_{\bar S_u}(Z_u) \mathbf{1}_\M, \bar S_u\big) du 
- \int_0^t \big(f h^{\F,\infty}_{\bar S_u} \mathbf{1}_\F + f h^{\M,\infty}_{\bar S_u} \mathbf{1}_\M, Z_u\big) du \\
\notag&\q+ \int_0^t \Big(\Big[\big(f(\M,\infty,\cdot)-f(\FM,\cdot,\cdot)\big) \partial_Sh^{\F,\infty}_{\bar S_u}(Z_u) + \big(f(\F,\cdot,\infty)-f(\FM,\cdot,\cdot)\big) \partial_Sh^{\M,\infty}_{\bar S_u}(Z_u) \Big] \mathbf{1}_\FM, \bar S_u\Big) du \\
\notag&\q+ \int_0^t \Big(\Big[\big(f(\M,\infty,\cdot)-f(\FM,\cdot,\cdot)\big) h^{\F,\infty}_{\bar S_u} + \big(f(\F,\cdot,\infty)-f(\FM,\cdot,\cdot)\big) h^{\M,\infty}_{\bar S_u} \Big] \mathbf{1}_\FM, Z_u\Big) du \\
\notag&\q+ \int_0^t \Big(\big(f(\F,\cdot,\infty)+f(\M,\infty,\cdot)-f(\FM,\cdot,\cdot)\big) \partial_Sh^{\FM,\infty}_{\bar S_u}(Z_u) \mathbf{1}_\FM, \bar S_u\Big) du \\
\notag&\q+ \int_0^t \Big(\big(f(\F,\cdot,\infty)+f(\M,\infty,\cdot)-f(\FM,\cdot,\cdot)\big) h^{\FM,\infty}_{\bar S_u} \mathbf{1}_\FM, Z_u\Big) du \\
\notag&\q+ \int_0^t \Big( \big[f(\F,0,\infty) \partial_S(b^\infty_{\bar S_u}m^{\F,\infty}_{\bar S_u})(Z_u) + f(\M,\infty,0) \partial_S(b^\infty_{\bar S_u}m^{\M,\infty}_{\bar S_u})(Z_u) \big] (\mathbf{1}_\FM+\mathbf{1}_\F), \bar S_u\Big) du \\
\notag&\q+ \int_0^t \Big( \big[f(\F,0,\infty)m^{\F,\infty}_{\bar S_u} + f(\M,\infty,0)m^{\M,\infty}_{\bar S_u}\big] b^\infty_{\bar S_u}(\mathbf{1}_\FM+\mathbf{1}_\F), Z_u\Big) du \\
\notag&\q- \int_0^t \Big(\Big( [f(\F,\Cdot,\infty)+f(\M,\infty,\ast)-f(\FM,\Cdot,\ast)] \partial_S\rho^\infty_{\bar S_u}(Z_u)(\Cdot,\ast) \mathbf{1}_\F, \bar S_u\Big)_{\Cdot} \mathbf{1}_\M, \bar S_u\Big)_\ast du \\
\notag&\q- \int_0^t \Big(\Big( [f(\F,\Cdot,\infty)+f(\M,\infty,\ast)-f(\FM,\Cdot,\ast)] \rho^\infty_{\bar S_u}(\Cdot,\ast) \mathbf{1}_\F, Z_u\Big)_{\Cdot} \mathbf{1}_\M, \bar S_u\Big)_\ast du \\
\notag&\q- \int_0^t \Big(\Big( [f(\F,\Cdot,\infty)+f(\M,\infty,\ast)-f(\FM,\Cdot,\ast)] \rho^\infty_{\bar S_u}(\Cdot,\ast) \mathbf{1}_\F, \bar S_u\Big)_{\Cdot} \mathbf{1}_\M, Z_u\Big)_\ast du \\
&\q+ \tilde M^{f,\infty}_t,
\label{E:fZinftyMonogamy}
\end{align}
where $\tilde M^{f,\infty}$ is a continuous Gaussian martingale with predictable quadratic variation 
\begin{align*}
&\left<\tilde M^{f,\infty}\right>_t 
= \int_0^t \big(f^2 h^{\F,\infty}_{\bar S_u} \mathbf{1}_\F + f^2 h^{\M,\infty}_{\bar S_u} \mathbf{1}_\M, \bar S_u\big) du \\
&\q+ \int_0^t \Big(\Big[ \big(f(\M,\infty,\cdot)-f(\FM,\cdot,\cdot)\big)^2 h^{\F,\infty}_{\bar S_u} + \big(f(\F,\cdot,\infty)-f(\FM,\cdot,\cdot)\big)^2 h^{\M,\infty}_{\bar S_u} \Big] \mathbf{1}_\FM, \bar S_u\Big) du\\
&\q+ \int_0^t \Big(\big(f(\F,\cdot,\infty)+f(\M,\infty,\cdot)-f(\FM,\cdot,\cdot)\big)^2 h^{\FM,\infty}_{\bar S_u} \mathbf{1}_\FM, \bar S_u\Big) du\\
&\q+ \int_0^t \Big( \big[f^2(\F,0,\infty)\gamma^{\F\F,\infty}_{\bar S_u} + f^2(\M,\infty,0)\gamma^{\M\M,\infty}_{\bar S_u} + 2 f(\F,0,\infty)f(\M,\infty,0)\gamma^{\F\M,\infty}_{\bar S_u}\big] b^\infty_{\bar S_u} (\mathbf{1}_\FM+\mathbf{1}_\F), \bar S_u\Big) du \\
&\q+ \int_0^t \big(\big( [f(\F,\Cdot,\infty)+f(\M,\infty,\ast)-f(\FM,\Cdot,\ast)]^2 \rho^\infty_{\bar S_u}(\Cdot,\ast) \mathbf{1}_\F, \bar S_u\big)_{\Cdot} \mathbf{1}_\M, \bar S_u\big)_\ast du. 
\end{align*}
\end{theorem}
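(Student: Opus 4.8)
The plan is to run the same four-step programme used for Theorem~\ref{T:CLT}, regarding the serial-monogamy process as an instance of the general multi-type framework whose state now carries a pair of ages $(v,w)$ (with one coordinate equal to $\infty$ for singles) and whose dynamics is driven, beyond birth, death and splitting, by the pairwise marriage rate $\rho$. The only genuinely new feature is the \emph{bilinear} marriage term in \eqref{E:fZKMonogamy}, which couples the female and male sub-measures through the product measure $\bar S^K_u\otimes\bar S^K_u$; every other term is structurally identical to the multi-type case, so the estimates of \cite{FanEtal19} transfer directly. Concretely I would: (i)~obtain the semimartingale decomposition \eqref{E:fZKMonogamy} by subtracting the deterministic limit equation \eqref{E:SbarInftyMonogamy} of Theorem~\ref{T:MonogamyLLN} from $\sqrt K$ times the scaled-process equation; (ii)~prove tightness of $(Z^K)$ in $\D(\T,W^{-4})$; (iii)~identify every drift term as $K\to\infty$; and (iv)~prove a martingale central limit theorem for $\tilde M^{f,K}$, closing with uniqueness of the limiting equation \eqref{E:fZinftyMonogamy}.

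For tightness I would test \eqref{E:fZKMonogamy} against an orthonormal basis of $W^4$ and derive a uniform-in-$K$ bound $\sup_K\E\big[\sup_{t\le T}\|Z^K_t\|^2_{W^{-3}}\big]<\infty$ by a Gronwall argument, starting from the control of $\|Z^K_0\|_{W^{-2}}$ in (A4') and the embedding $W^{-2}\hookrightarrow W^{-3}$. Each drift term linear in $Z^K_u$ is estimated by $\|Z^K_u\|_{W^{-3}}$ times a bounded operator norm; the $\sqrt K$-difference terms are bounded using $\sqrt K\sup_\mu\|q^K_\mu-q^\infty_\mu\|_\infty\to0$ from (A2') against the bounded masses; and the martingale contribution is handled through the bound on $\langle\tilde M^{f,K}\rangle$ together with (A1'). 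The marriage term is where the regularity of $K\rho^K$ in \emph{both} age arguments (from (A2')/(A3')) is essential: it guarantees that the partial pairings of the marriage kernel with $\bar S_u$ define bounded operators on $W^{-3}$, so that this term too enters the Gronwall estimate as an ordinary linear drift. The Hilbert--Schmidt embedding $W^{-3}\underset{H.S.}{\hookrightarrow}W^{-4}$ then renders the laws of $Z^K_t$ precompact in $W^{-4}$, and an Aldous--Rebolledo argument applied term-by-term to \eqref{E:fZKMonogamy} yields tightness in $\D(\T,W^{-4})$.

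To identify the limit I would pass to a convergent subsequence $Z^K\Rightarrow Z$ and treat each drift separately. The terms linear in $Z^K_u$ converge by the parameter convergence (C2'), (A2') combined with $Z^K\Rightarrow Z$ and the Law of Large Numbers (Theorem~\ref{T:MonogamyLLN}). Each $\sqrt K$-difference term is split as $\sqrt K\big(q^K_{\bar S^K_u}-q^\infty_{\bar S^K_u}\big)+\sqrt K\big(q^\infty_{\bar S^K_u}-q^\infty_{\bar S_u}\big)$ for $q=h^\F,h^\M,h^\FM,b\,m^\F,b\,m^\M$ and $K\rho^K$: the first summand vanishes uniformly by the rate in (A2'), while the second converges to $\partial_S q^\infty_{\bar S_u}(Z_u)$ by the Fr\'echet differentiability (A3') evaluated along $\sqrt K(\bar S^K_u-\bar S_u)=Z^K_u\Rightarrow Z_u$ (the products $b\,m^i$ being differentiable by the product rule, which produces the $\partial_S(b^\infty m^{i,\infty})$ terms of \eqref{E:fZinftyMonogamy}). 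For the marriage term the same splitting is combined with the bilinear expansion $\bar S^K_u\otimes\bar S^K_u=\bar S_u\otimes\bar S_u+\tfrac1{\sqrt K}\big(Z^K_u\otimes\bar S_u+\bar S_u\otimes Z^K_u\big)+\tfrac1K Z^K_u\otimes Z^K_u$: after multiplication by $\sqrt K$, the leading term cancels against the Law of Large Numbers limit, the two cross terms produce the linear-in-$Z$ marriage contributions of \eqref{E:fZinftyMonogamy}, the $\partial_S\rho^\infty$ contribution comes from the $\rho$-difference, and the $\tfrac1K Z^K_u\otimes Z^K_u$ remainder is $O(1/\sqrt K)$ and disappears.

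Finally, convergence of the martingale part follows from the martingale central limit theorem: the predictable quadratic variation $\langle\tilde M^{f,K}\rangle_t$ converges to $\langle\tilde M^{f,\infty}\rangle_t$ by (A0'), (A2') and Theorem~\ref{T:MonogamyLLN}, the jumps are $O(1/\sqrt K)$ so the limit is continuous, and the Lindeberg condition is verified from the square-integrability of the offspring and marriage increments in (A1'); hence $\tilde M^{f,K}$ converges to the continuous Gaussian martingale $\tilde M^{f,\infty}$. Together with the drift identification this shows that every subsequential limit solves \eqref{E:fZinftyMonogamy}, and uniqueness of that solution—pinning down the limit and completing the proof—follows from a Gronwall estimate in $W^{-4}$ using the uniform bound on the Fr\'echet derivatives in (A3'). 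I expect the main obstacle to be precisely the marriage term: one must simultaneously control the $\sqrt K$-difference of $K\rho^K$, the product-measure structure, and the $\infty$-valued age slot of singles, and verify that the associated bilinear form remains bounded in the negative Sobolev norms, so that both the tightness estimates and the linearisation go through.
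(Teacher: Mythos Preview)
Your outline matches the paper's programme in its broad strokes, but there is a genuine gap in the tightness step, and the paper is explicit about it: ``The mechanism in proving the Central Limit Theorem in this serial monogamy mating system would be slightly different from that in Section~\ref{S:CLTProof} due to the iterative bracket, coming from the coupling. To overcome the issue arose from the coupling, we need the stopping times $\tau^K_N = \inf\{t\in\T: \bar X^K_t>N\}$.''

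The problem is your claim that the marriage term ``enters the Gronwall estimate as an ordinary linear drift''. In whichever decomposition of the bilinear marriage integral you use, you do \emph{not} get a coefficient that is bounded uniformly in $\omega$. If you expand $\bar S^K_u\otimes\bar S^K_u - \bar S_u\otimes\bar S_u$ telescopically, one of the two cross terms is $((\ldots K\rho^K_{\bar S^K_u}\mathbf{1}_\F,\bar S^K_u)\mathbf{1}_\M,Z^K_u)$, and its bound is $c\,\|Z^K_u\|_{W^{-2}}\,(1,\bar S^K_u)$ with the \emph{random} factor $(1,\bar S^K_u)$; if instead you use your symmetric expansion with remainder $\tfrac{1}{\sqrt K}Z^K_u\otimes Z^K_u$, that remainder contributes $\tfrac{c}{\sqrt K}\|Z^K_u\|^2$, which is quadratic in the very quantity you are trying to bound. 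In either case you cannot close a Gronwall loop in expectation, so the bound $\sup_K\E\big[\sup_{t\le T}\|Z^K_t\|^2_{W^{-3}}\big]<\infty$ you aim for is not obtainable this way. The paper sidesteps this by localising: on the event $\{\tau^K_N>T\}$ one has $(1,\bar S^K_u)\le \bar X^K_u\le N$, the Gronwall argument then yields $\E\big[\|Z^K_t\|_{W^{-2}}\mathbf{1}_{\tau^K_N>T}\big]\le c_T(N)$ and $\E\big[\sup_{t\le T}\|Z^K_t\|_{W^{-3}}\mathbf{1}_{\tau^K_N>T}\big]\le c_T(N)$, and the complementary event is controlled by $\P(\tau^K_N\le T)\le \tfrac{1}{N}\E\big[\sup_{t\le T}\bar X^K_t\big]$ from the LLN estimates. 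Tightness (T1) is then obtained only in probability, not via a uniform moment bound. The same localisation is needed again in the Aldous--Rebolledo step (T2a). Once you add this stopping-time device, the rest of your plan (shift-operator representation to avoid losing a derivative, Fr\'echet linearisation for the $\sqrt K$-differences including $K\rho^K$, martingale CLT, Gronwall uniqueness in $W^{-4}$) coincides with the paper's argument.
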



\appendix

\section{Appendices for Section \ref{S:Multitype}} \label{S:Appendix-Multitype}

Here we include the proofs of the results of this paper. 
We will use $c$, with or without subscript, to denote generic constants, all independent of $K$. 

\subsection{Proofs on the model set-up} \label{SetupProof}

\subsubsection{A generator formulation}

A model description relying on the generator of the process was given in \cite{JagKle16}. 
We restate it here for completeness. 
For $F \in C_b^1(\R)$ and $f \in C^1(\S)$, the limit
$$\lim_{t\to 0} \frac1t \E_S\big[F((f,S_t))-F((f,S))\big] = \mathcal{G}F((f,S))$$
exists and
\begin{multline*}
\mathcal{G}F((f,S)) = F'((f,S))(f',S) \\
+ \sum_{j=1}^{(1,S)} b_S(s_j) \Bigg( \E\bigg[ F\bigg( \sum_{i\in\K} f(i,0)\widecheck \xi_S^i(s_j) + (f,S) \bigg) \bigg] - F\big((f,S)\big) \Bigg) \\
+ \sum_{j=1}^{(1,S)} h_S(s_j) \Bigg( \E\bigg[ F\bigg( \sum_{i\in\K} f(i,0)\widehat \xi_S^i(s_j) + (f,S) - f(s_j) \bigg) \bigg] - F\big((f,S)\big) \Bigg),
\end{multline*}
where $(1,S)$ is the size of $S$. 
Consequently, Dynkin's formula holds,
$$F((f,S_t)) = F((f,S_0)) + \int_0^t \mathcal{G}F((f,S_u)) du + M_t^{F,f},$$
where $M^{F,f}_t$ is a local martingale with the predictable quadratic variation
$$\big<M^{F,f}\big>_t = \int_0^t \big( \mathcal{G}F^2((f,S_u)) - 2F((f,S_u))\mathcal{G}F((f,S_u)) \big) du.$$

Writing $M_t^{f}$ for the special case where $F$ is chosen as the identity function mapping $u$ into itself, we obtain \eqref{E:fSt}. 

\subsubsection{A formulation through evolution} \label{S:AppA-model}

The model can also be described by analysis of the evolution of the population as done in \cite{FanEtal19} for single-type case.
Recall the representation of $S_t$ in \eqref{E:S}. 
Let $S^{(i)}_t(dv) = S_t(\{i\},dv) = \sum_{x\in I} \mathbf{1}_{\tau_x\le t<\sigma_x} \mathbf{1}_{\kappa_x=i} \delta_{t-\tau_x}(dv)$ be the age structure of the subpopulation of type $i$ individuals. Then, $(f,S_t) = \sum_{i\in\K} (f(i,\cdot),S^{(i)}_t)$.
From \cite{FanEtal19}, we have, for $f \in C^1(\S)$,
\begin{multline*}
(f(i,\cdot),S^{(i)}_t) = (f(i,\cdot),S^{(i)}_0) + \int_0^t (f'(i,\cdot),S^{(i)}_u) du \\
+ f(i,0)B^i([0,t]) - \int_{\A\times[0,t]} f(i,v) D^i(dv,du),
\end{multline*}
where $B^i(t)$ is the number of individuals of type $i$ born by time $t$ and $D^i(v,t)$ is the number of individuals of type $i$ who died by time $t$ and whose life span was not greater than $v$: 
$$B^i(t) = \sum_{x\in I} \mathbf{1}_{\kappa_x=i} \mathbf{1}_{\tau_x\le t}
\q \textnormal{and} \q
D^i(v,t) = \sum_{x\in I} \mathbf{1}_{\kappa_x=i} \mathbf{1}_{\lambda_x\le v} \mathbf{1}_{\sigma_x\le t}.$$
Therefore, $S_t$ satisfies the following:
for $f \in C^1(\S)$,
\begin{equation} \label{E:BasicEvol}
(f,S_t) = (f,S_0) + \int_0^t (f',S_u) du + \sum_{i\in\K} f(i,0)B^i([0,t]) - \int_{\S\times[0,t]} f(s) D(ds,du),
\end{equation}
where $D((i,v),t) = D^i(v,t)$. 
We obtain \eqref{E:fSt} by compensating the last two terms in \eqref{E:BasicEvol}. 

For each $i$, $M_{\widecheck B^i,f}(t) := f(i,0) \big(\widecheck B^i([0,t])-\int_0^t(b_{S_u}\widecheck m^i_{S_u},S_u)du\big)$ is a martingale with predictable quadratic variation $\big<M_{\widecheck B^i,f}\big>_t = f^2(i,0) \int_0^t (b_{S_u}\widecheck \gamma^{ii}_{S_u},S_u) du$, with $\widecheck B$ denoting the number of individuals born through bearings of mothers by time $t$.
Sum over $i$ to obtain the compensated process 
$$M_{\widecheck B,f}(t) := \sum_{i\in\K} f(i,0) \bigg(\widecheck B^i([0,t])-\int_0^t(b_{S_u}\widecheck m^i_{S_u},S_u)du\bigg)$$
as a martingale with the predictable quadratic variation 
\begin{equation} \label{E:MhatBQV}
\big<M_{\widecheck B,f}\big>_t = \sum_{i_1\in\K} \sum_{i_2\in\K} f(i_1,0) f(i_2,0) \int_0^t (b_{S_u}\widecheck \gamma^{i_1i_2}_{S_u},S_u) du.
\end{equation}
Note that $\big[M_{\widecheck B^{i_1},f}, M_{\widecheck B^{i_2},f}\big]_t = \sum_{u\le t} \Delta M_{\widecheck B^{i_1},f}(u) \Delta M_{\widecheck B^{i_2},f}(u)$.
For $i_1\neq i_2$, the product of jumps $\Delta M_{\widecheck B^{i_1},f}(u) \Delta M_{\widecheck B^{i_2},f}(u)$
is non-zero precisely when there is a birth of both types, in which case it is equal to
$$
\sum_{x\in I} f(i_1,0)f(i_2,0) \mathbf{1}_{\sigma_x>u}
\Big(\sum_{j\in\N} \mathbf{1}_{\tau_{xj}=u} \mathbf{1}_{\kappa_{xj}=i_1}\Big)
\Big(\sum_{j\in\N} \mathbf{1}_{\tau_{xj}=u} \mathbf{1}_{\kappa_{xj}=i_2}\Big).$$ 
Thus, 
$$\big<M_{\widecheck B^{i_1},f}, M_{\widecheck B^{i_2},f}\big>_t = f(i_1,0)f(i_2,0) \int_0^t (b_{S_u}\widecheck \gamma^{i_1,i_2}_{S_u},S_u) du$$
is a compensator of $\big[M_{\widecheck B^{i_1},f}, M_{\widecheck B^{i_2},f}\big]_t$, and, since 
$$\big<M_{\widecheck B,f}\big>_t = \sum_{i_1\in\K}\sum_{i_2\in\K} \big<M_{\widecheck B^{i_1},f}, M_{\widecheck B^{i_2},f}\big>_t,$$ \eqref{E:MhatBQV} follows.
In a similar manner, with $\widehat B$ denoting the number of individuals generated through splitting 
by time $t$, 
$M_{\widehat B,f}(t) := \sum_{i\in\K} f(i,0) \big(\widehat B^i([0,t])-\int_0^t(h_{S_u}\widehat m^i_{S_u},S_u)du\big)$
is a martingale with
$$\big<M_{\widehat B,f}\big>_t = \sum_{i_1\in\K} \sum_{i_2\in\K} f(i_1,0) f(i_2,0) \int_0^t (h_{S_u}\widehat \gamma^{i_1i_2}_{S_u},S_u) du,$$
and
$M_{D,f}(t) := \int_{\S\times[0,t]} f(s) D(ds,du) - \int_0^t(h_{S_u}f,S_u)du$
is a martingale with
$$\big<M_{D,f}\big>_t = \int_0^t (h_{S_u}f^2,S_u) du.$$
Now, let $M^f_t = M_{\widecheck B,f}(t) + M_{\widehat B,f}(t) - M_{D,f}(t)$.
Analysing cross terms as before and adding them together, we have, since $\big<M_{\widecheck B,f},M_{\widehat B,f}\big>_t =0$ and $\big<M_{\widecheck B,f},M_{D,f}\big>_t=0$, that
\begin{align*}
&\big<M^f\big>_t = \big<M_{\widecheck B,f}\big>_t + \big<M_{\widehat B,f}\big>_t + \big<M_{D,f}\big>_t - 2\big<M_{\widehat B,f},M_{D,f}\big>_t \\
&= \int_0^t \bigg(
\sum_{i_1\in\K} \sum_{i_2\in\K} f(i_1,0)f(i_2,0) w^{i_1i_2}_{S_u} + h_{S_u} f^2
- 2 \sum_{i\in\K} f(i,0) h_{S_u} \widehat m^{i}_{S_u} f, S_u \bigg) du.
\end{align*}
In fact, it can further be shown that, for $f,g$ on $\S$,
\begin{multline*}
\big<M^f,M^g\big>_t = \int_0^t \bigg(
\sum_{i_1\in\K} \sum_{i_2\in\K} f(i_1,0)g(i_2,0) w^{i_1i_2}_{S_u} + h_{S_u} fg \\
- \sum_{i\in\K} \big(f(i,0)g+g(i,0)f\big) h_{S_u} \widehat m^{i}_{S_u}, S_u \bigg) du.
\end{multline*}

\subsubsection{Proof of Remark \ref{R:MeasureM}} \label{S:MeasureMProof}

Let
\begin{align*}
M_t(ds)
&= \sum_{i\in\K} \delta_{(i,0)}(ds) \bigg(\widecheck B^i([0,t]) - \int_0^t (b_{S_u}\widecheck m^i_{S_u}, S_u) du\bigg) \\
&\q\q+ \sum_{i\in\K} \delta_{(i,0)}(ds) \bigg(\widehat B^i([0,t]) - \int_0^t (h_{S_u}\widehat m^i_{S_u}, S_u) du\bigg) \\
&- \bigg(\sum_{x\in I} \delta_{(\kappa_x,\lambda_x)}(ds) \mathbf{1}_{\sigma_x\le t} - \int_0^t \sum_{x\in I} \delta_{(\kappa_x,u-\tau_x)}(ds) h_{S_u}(s) \mathbf{1}_{\tau_x\le u<\sigma_x} du \bigg),
\end{align*}
then $(f,M_t) = M^f_t$.
For $g\in C^0(\S)$ and $\varphi\in C^0(\T)$, $\int_0^t \varphi(u) d(g,M_u)$ is a martingale with
\begin{multline*}
\bigg<\int_0^\cdot \varphi(u) d(g,M_u) \bigg>_t = \int_0^t \varphi^2(u) \bigg( \sum_{i_1\in\K} \sum_{i_2\in\K} g(i_1,0)g(i_2,0) w^{i_1i_2}_{S_u} \\
+ h_{S_u} g^2(\cdot,u) - 2 \sum_{i\in\K} g(i,0,u) h_{S_u} \widehat m^{i}_{S_u} g(\cdot,u), S_u \bigg) du.
\end{multline*}
Write $\int_0^t (\varphi(u)g,dM_u)$ for $\int_0^t \varphi(u) d(g,M_u)$.
Then, by the Monotone Class Theorem (see e.g. \cite[I.22.1]{DelMey78}),
$\int_0^t (f(\cdot,u),dM_u)$ is a martingale for any $f\in C^0(\S\times\T)$
whose predictable quadratic variation coincides with \eqref{E:QVMf2v}.

\subsection{Proof of the Law of Large Numbers} \label{S:LLNProof}

This is done by checking the tightness of the scaled sequence $\bar S^K$ and the uniqueness of the limiting process.
By Jakubowski's theorem \cite{Jak86},  $\{\bar S^K\}$ is tight in $\D(\T,\mathcal{M})$ if:
\begin{enumerate}
\item [(J1)]
For each $\eta>0$, there exists a compact set $\mathcal{C}_\eta \in \mathcal{M}$ such that
$$\liminf_{K\to\infty} \P\big(\bar S^K_t \in \mathcal{C}_\eta \, \forall t\in \T \big) > 1-\eta.$$
\item [(J2)]
For each $f\in C^1$, $\{(f,\bar S^K)\}$ is tight in $\D(\T,\R)$.
\end{enumerate}
In terms of the semimartingale decomposition $(f,\bar S^K_t) = V^{f,K}_t + \bar M^{f,K}_t$, 
(J2) reduces to the following Aldous-Rebolledo criteria:
\begin{enumerate}
\item [(J2a)]
For each $t\in\T$, $(f,\bar S^K_t)$ is tight,
that is, for each $\epsilon>0$, there exist $\delta>0$ such that for all $K$,
$\P\big( |(f,\bar S^K_t)| >\delta \big) <\epsilon.$
\item [(J2b)]
For each $\epsilon_1, \epsilon_2 >0$, there exist $\delta>0$ and $K_0 \ge1$ such that
for every sequence of stopping times $\tau^K \le T$,
\begin{gather}
\sup_{K>K_0} \sup_{\zeta<\delta} \mathbb{P} \big( | V_{\tau^K+\zeta\wedge T}^{f,K} - V_{\tau^K}^{f,K} | >\epsilon_1 \big) <\epsilon_2, \tag{i} \\
\sup_{K>K_0} \sup_{\zeta<\delta} \mathbb{P} \Big( \big| \big<\bar{M}^{f,K}\big>_{\tau^K+\zeta\wedge T} - \big<\bar{M}^{f,K}\big>_{\tau^K} \big| >\epsilon_1\Big) <\epsilon_2. \tag{ii}
\end{gather}
\end{enumerate}

\subsubsection{Preliminary estimates}

Recall the operator $L^K_S$ defined in \eqref{E:LK}.
Let $\hat L^K_S$ be such that
\begin{equation} \label{E:LhatK}
\widehat L^K_Sf = - h^K_Sf + \sum_{i\in\K} f(i,0) n_S^{i,K},
\end{equation}
so that $L^K_Sf = f' + \widehat L^K_Sf$.
Define also the operator $\Pi^K_S$ such that
$$\Pi^K_S f = \sum_{i_1\in\K} \sum_{i_2\in\K} f(i_1,0)f(i_2,0) w^{i_1i_2,K}_{S} + h^K_{S} f^2 \\
- 2 \sum_{i\in\K} f(i,0) h^K_{S} \widehat m^{i,K}_{S} f,$$
so that
$\big<M^{f,K}\big>_t = \int_0^t ( \Pi^K_{\bar S^K_u}f, S^K_u ) du$.

\begin{proposition} \label{P:PiLBnd}
Suppose (C0) holds.
Then, for any $f\in C^0$,
$$|\Pi^K_{S} f| \le c ||f||^2_{C^0} \q \textnormal{and} \q |\widehat L^K_{S} f| \le c ||f||_{C^0},$$
and for any $f\in C^1$,
$$|L^K_{S} f| \le c ||f||_{C^1}.$$
In particular,
$|\Pi^K_{S} 1| \le c$ and $|L^K_{S} 1| = |\widehat L^K_{S} 1| \le c$.
\end{proposition}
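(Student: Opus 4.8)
The plan is to bound each of the three operators termwise, using only the uniform boundedness supplied by (C0) together with the finiteness of the type space $\K$. The key preliminary observation is that the composite parameters $n_S^{i,K} = b^K_S\widecheck m^{i,K}_S + h^K_S\widehat m^{i,K}_S$ and $w^{i_1i_2,K}_S = b^K_S\widecheck\gamma^{i_1i_2,K}_S + h^K_S\widehat\gamma^{i_1i_2,K}_S$ are themselves uniformly bounded, over $s\in\S$, $K\ge1$ and $S\in\mathcal{M}(\S)$: each is a finite sum of products of the quantities $b,h,m,\gamma$, all of which are uniformly bounded by (C0), so there is a constant $c$, independent of $K$ and $S$, with $|n_S^{i,K}|\le c$ and $|w^{i_1i_2,K}_S|\le c$ for every $i,i_1,i_2\in\K$.

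For $\widehat L^K_S f$ I would apply the triangle inequality to $\widehat L^K_Sf = -h^K_Sf + \sum_{i\in\K}f(i,0)n_S^{i,K}$, bound $|h^K_S|\le c$, $|f|\le ||f||_{C^0}$ and $|f(i,0)|\le ||f||_{C^0}$, and use that the sum over $i\in\K$ has only finitely many terms, giving a bound of the form $c\,||f||_{C^0}$. Since $L^K_Sf = f' + \widehat L^K_Sf$ and $|f'|\le ||f||_{C^1}$, the bound on $L^K_S$ then follows from $||f||_{C^0}\le ||f||_{C^1}$, yielding $|L^K_Sf|\le c\,||f||_{C^1}$. For $\Pi^K_S f$ I would again split into its three groups of terms: the double sum $\sum_{i_1,i_2}f(i_1,0)f(i_2,0)w^{i_1i_2,K}_S$ is controlled by the bound on $w$ and the finiteness of $\K$, the term $h^K_Sf^2$ by $c\,||f||^2_{C^0}$, and the cross term $2\sum_i f(i,0)h^K_S\widehat m^{i,K}_S f$ likewise; summing the three contributions gives $|\Pi^K_Sf|\le c\,||f||^2_{C^0}$. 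The stated special cases are obtained by taking $f\equiv1$, for which $||f||_{C^0}=||f||_{C^1}=1$ and $f'\equiv0$, so that $L^K_S1=\widehat L^K_S1$.

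There is no genuine analytic obstacle here; the only point requiring care is bookkeeping, namely verifying that every generic constant $c$ depends at most on the uniform bounds from (C0) and on the cardinality of $\K$, and in particular not on $K$, $S$ or $f$. This uniformity is exactly what makes the proposition useful downstream (e.g.\ in the tightness estimates (J2a)--(J2b)), so I would be explicit that the passage from the bounds on $b,h,m,\gamma$ to the bounds on $n$ and $w$, and then to the operators, introduces only factors of $|\K|$ and products of the (C0) constants.
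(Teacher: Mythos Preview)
Your proposal is correct and follows exactly the same approach as the paper, which simply states that the bounds follow immediately from the boundedness of the parameters (C0) and the definition of $||f||_{C^j}$. You have merely written out explicitly the termwise bounds that the paper leaves implicit.
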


\begin{proof}
This follows immediately from the boundedness of the parameters and the definition of $||f||_{C^j}$.
\end{proof}

\begin{proposition} 
Suppose (C0) and (C3) hold.
Then,
\begin{equation} \label{E:1SK}
\E[(1,\bar S^K_t)] \le (1,\bar S^K_0) e^{ct}
\end{equation}
and
\begin{equation} \label{E:sup1SK}
\sup_{K\ge1} \E\Big[\sup_{t\le T}(1,\bar S^K_t)\Big] < \infty.
\end{equation}
\end{proposition}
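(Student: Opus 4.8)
The plan is to test the scaled evolution equation \eqref{E:SbarK} against the constant function $f\equiv1$ and then combine a Gr\"onwall argument with Doob's maximal inequality. Since $1'=0$, the operator in \eqref{E:LK} reduces to $L^K_S 1 = -h^K_S + \sum_{i\in\K} n^{i,K}_S = \widehat L^K_S 1$, which by Proposition \ref{P:PiLBnd} satisfies $|L^K_S 1|\le c$ uniformly. Because $\bar S^K_u$ is a nonnegative measure, this yields the pointwise estimate $(L^K_{\bar S^K_u}1,\bar S^K_u)\le c\,(1,\bar S^K_u)$, which drives both bounds.

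For \eqref{E:1SK} I would first address the fact that $M^{1,K}$ is only locally square integrable. Introducing a localizing sequence of stopping times $\tau_n\uparrow\infty$ reducing $M^{1,K}$ (for instance the hitting times $\tau_n=\inf\{t:(1,\bar S^K_t)>n\}$, which are finite localizers since the bounded rates prevent explosion), the stopped martingale has zero expectation, so evaluating \eqref{E:SbarK} at $t\wedge\tau_n$ and taking expectations gives
$$\E\big[(1,\bar S^K_{t\wedge\tau_n})\big] \le (1,\bar S^K_0) + c\int_0^t \E\big[(1,\bar S^K_{u\wedge\tau_n})\big]\,du.$$
Gr\"onwall's inequality then produces $\E[(1,\bar S^K_{t\wedge\tau_n})]\le (1,\bar S^K_0)e^{ct}$, uniformly in $n$, and letting $n\to\infty$ via Fatou's lemma establishes \eqref{E:1SK}.

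For \eqref{E:sup1SK}, rearranging \eqref{E:SbarK} and taking a supremum gives
$$\sup_{t\le T}(1,\bar S^K_t) \le (1,\bar S^K_0) + c\int_0^T (1,\bar S^K_u)\,du + \frac1K\sup_{t\le T}\big|M^{1,K}_t\big|.$$
The first two terms are controlled in expectation, uniformly in $K$, by \eqref{E:1SK} together with the bound $\sup_K(1,\bar S^K_0)<\infty$ from (C3). For the martingale term, note that once \eqref{E:1SK} holds, the quadratic variation formula $\langle M^{1,K}\rangle_T=\int_0^T(\Pi^K_{\bar S^K_u}1,S^K_u)\,du$, the estimate $|\Pi^K_S 1|\le c$ of Proposition \ref{P:PiLBnd}, and $S^K_u=K\bar S^K_u$ give $\E[\langle M^{1,K}\rangle_T]\le cK\int_0^T\E[(1,\bar S^K_u)]\,du<\infty$; hence $M^{1,K}$ is in fact a genuine square-integrable martingale on $[0,T]$, and Doob's $L^2$ inequality with Cauchy-Schwarz applies directly to yield
$$\frac1K\,\E\Big[\sup_{t\le T}\big|M^{1,K}_t\big|\Big] \le \frac{2}{K}\Big(\E\big[\langle M^{1,K}\rangle_T\big]\Big)^{1/2} \le \frac{c}{\sqrt K}\Big(\int_0^T \E[(1,\bar S^K_u)]\,du\Big)^{1/2},$$
which is $O(K^{-1/2})$ and thus bounded uniformly for $K\ge1$. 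Combining the three terms gives \eqref{E:sup1SK}.

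The main obstacle is purely technical and lies in the localization: since the relevant martingales are a priori only locally square integrable, the expectation-taking step in the Gr\"onwall estimate is not immediately justified, and one must carry the localizing sequence $\tau_n$ through before passing to the limit by Fatou. A second subtlety worth flagging is the mild circularity between the two bounds: the quadratic variation estimate needed for \eqref{E:sup1SK} relies on the first-moment bound \eqref{E:1SK} (which is also what upgrades $M^{1,K}$ from a local to a true martingale), so the two parts must be proved in this order.
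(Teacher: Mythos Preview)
Your proposal is correct and follows essentially the same route as the paper: test \eqref{E:SbarK} with $f\equiv 1$, use $|L^K_S 1|\le c$ from Proposition~\ref{P:PiLBnd} and Gr\"onwall for \eqref{E:1SK}, then take suprema and control the martingale term via Doob's inequality and the bound $|\Pi^K_S 1|\le c$ for \eqref{E:sup1SK}. If anything, you are more careful than the paper, which simply ``takes expectation'' of the local martingale without spelling out the localization/Fatou step you describe.
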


\begin{proof}
From \eqref{E:SbarK} and Proposition \ref{P:PiLBnd},
$$(1,\bar S^K_t) \le (1,\bar S^K_0) + c_1 \int_0^t (1,\bar S^K_u) du + \frac1K M^{1,K}_t.$$
Taking expectation and applying Gronwall's inequality, we establish the first statement,
$\E[(1,\bar S^K_t)] \le (1,\bar S^K_0)e^{c_1t}$.

For the second statement, note that
$$\sup_{t\le T}(1,\bar S^K_t) \le (1,\bar S^K_0) + c_1 \int_0^T (1,\bar S^K_u) du + \frac1K \sup_{t\le T} M^{1,K}_t.$$
Taking expectation and using the first statement,
$$\E\Big[\sup_{t\le T}(1,\bar S^K_t)\Big] \le (1,\bar S^K_0) + c_1 (1,\bar S^K_0) e^{c_1T}T + \frac1K \E\Big[\sup_{t\le T} M^{1,K}_t\Big].$$
Now,
$$\E\Big[\sup_{t\le T} M^{1,K}_t\Big]^2 \le \E\Big[\sup_{t\le T} \big(M^{1,K}_t\big)^2 \Big] \le 4\E\big[ \big<M^{1,K}\big>_T \big]$$
by Doob's inequality; using Proposition \ref{P:PiLBnd} and the first statement again,
$$\E\big[ \big<M^{1,K}\big>_T \big] \le c_2 \int_0^T \E[(1,S^K_u)] du \le c_2 K (1,\bar S^K_0) e^{c_1T}T.$$
Therefore,
$$\E\Big[\sup_{t\le T}(1,\bar S^K_t)\Big] \le (1,\bar S^K_0) (1+ c_1 e^{c_1T}T) + \frac1{\sqrt{K}} c_3 (1,\bar S^K_0)^{1/2} e^{c_4T}T^{1/2},$$
which is bounded in $K$ by (C3).
\end{proof}

\subsubsection{Tightness of $\bar S^K$}

\begin{proposition} \label{P:SbarKTight}
Suppose (C0) and (C3) hold.
Then, the sequence $\bar S^K$ is tight in $\D(\T,\mathcal{M})$.
\end{proposition}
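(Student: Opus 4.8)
The plan is to verify Jakubowski's two conditions (J1) and (J2), both of which become accessible once the moment estimates \eqref{E:1SK}--\eqref{E:sup1SK} and the operator bounds of Proposition \ref{P:PiLBnd} are in hand. The semimartingale decomposition \eqref{E:SbarK} supplies exactly the finite-variation/martingale splitting needed for the Aldous--Rebolledo reduction of (J2).

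For the compact containment (J1) I would exploit that $\S=\K\times\A$ is compact. On a compact metric space, Prokhorov's theorem says that a family of finite measures is relatively compact in the weak topology precisely when its total masses are uniformly bounded; hence the set $\mathcal{C}_\eta := \{\mu\in\mathcal{M} : (1,\mu)\le R_\eta\}$ is weakly compact for any finite $R_\eta$. Applying Markov's inequality to $\sup_{t\le T}(1,\bar S^K_t)$ and invoking the uniform bound \eqref{E:sup1SK}, I can choose $R_\eta$ so large that $\sup_K \P\big(\sup_{t\in\T}(1,\bar S^K_t)>R_\eta\big)<\eta$, which delivers (J1).

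For (J2) I would write $(f,\bar S^K_t)=V^{f,K}_t+\bar M^{f,K}_t$ as in \eqref{E:SbarK}, with $\bar M^{f,K}=\tfrac1K M^{f,K}$, and check the Aldous--Rebolledo criteria. Marginal tightness (J2a) follows from $|(f,\bar S^K_t)|\le ||f||_\infty (1,\bar S^K_t)$ together with \eqref{E:1SK} and Markov. For (J2b)(i), the increment $V^{f,K}_{\tau^K+\zeta\wedge T}-V^{f,K}_{\tau^K}=\int_{\tau^K}^{\tau^K+\zeta\wedge T}(L^K_{\bar S^K_u}f,\bar S^K_u)\,du$ is bounded, via $|L^K_Sf|\le c||f||_{C^1}$ from Proposition \ref{P:PiLBnd}, by $c||f||_{C^1}\,\zeta\,\sup_{u\le T}(1,\bar S^K_u)$; taking expectations, using \eqref{E:sup1SK}, and applying Markov yields the required uniform smallness. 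For (J2b)(ii), since $\langle\bar M^{f,K}\rangle_t=\tfrac1K\int_0^t(\Pi^K_{\bar S^K_u}f,\bar S^K_u)\,du$, the bound $|\Pi^K_Sf|\le c||f||^2_{C^0}$ controls the increment by $\tfrac{c}{K}||f||^2_{C^0}\,\zeta\,\sup_{u\le T}(1,\bar S^K_u)$, which is handled identically (and is in fact reinforced by the $1/K$ prefactor).

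The genuinely load-bearing step is the compact containment (J1): it is the only place where the topological structure of $\mathcal{M}$ enters, and it is what makes the compactness of the state space $\S$ essential, since it reduces weak precompactness to the single scalar bound on total mass already furnished by \eqref{E:sup1SK}. Everything else is a mechanical consequence of the uniform first-moment control and the linear-in-mass operator bounds, so I would expect no real obstacle beyond careful bookkeeping with the stopping times $\tau^K$ in the Aldous--Rebolledo estimates.
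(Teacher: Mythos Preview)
Your proposal is correct and follows essentially the same route as the paper: verify (J1) via Markov's inequality on $\sup_{t\le T}(1,\bar S^K_t)$ together with \eqref{E:sup1SK} and compactness of $\{\mu:(1,\mu)\le R\}$, then check the Aldous--Rebolledo conditions using the operator bounds of Proposition~\ref{P:PiLBnd} and the moment estimates \eqref{E:1SK}--\eqref{E:sup1SK}. Your account is in fact slightly more explicit than the paper's in justifying why the total-mass ball is weakly compact (via Prokhorov and compactness of $\S$), but otherwise the argument is identical.
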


\begin{proof}
First, we show that (J1) holds.
From Markov's inequality and \eqref{E:sup1SK} we have
$$\P\big(\sup_{t\le T}(1,\bar S^K_t) >\delta\big) \le \frac{c}{\delta}.$$
This ensures the existence of $\delta_{\eta}$ such that
$\P(\sup_{t\le T}(1,\bar S^K_t)>\delta_{\eta}) \le \eta$.
Let
$$\mathcal{C}(\delta) = \{\mu\in \mathcal{M}: (1,\mu)\le \delta\}.$$
Then $\mathcal{C}(\delta)$ is compact and for any $\eta>0$,
$$\liminf_{K\to\infty} \P\big(\bar S^K_t \in \mathcal{C}(\delta_\eta) \, \forall t\in \T\big) >1-\eta.$$

Next, we show that (J2a) and (J2b) hold.
Condition (J2a) is immediate from Markov's inequality
$$\P\big( |(f,\bar S^K_t)| >\delta \big) \le \frac1\delta \E\big[ |(f,\bar S^K_t)| \big] \le \frac1\delta ||f||_{C^1} \E\big[ (1,\bar S^K_t) \big]$$
and by using \eqref{E:1SK}.

For (J2b)(i), note that, using Proposition \ref{P:PiLBnd},
\begin{align*}
| V_{\tau^K+\zeta\wedge T}^{f,K} - V_{\tau^K}^{f,K} |
&\le c_1 ||f||_{C^1} \int_{\tau^K}^{\tau^K+\zeta\wedge T} (1, \bar S^K_u) du \\
&= c_1 ||f||_{C^1} \int_0^\zeta (1, \bar S^K_{\tau^K+u\wedge T}) du
\le c_1 ||f||_{C^1} \delta \sup_{t\le T} (1, \bar S^K_t)
\end{align*}
and thus
$$\E\big[ | V_{\tau^K+\zeta\wedge T}^{f,K} - V_{\tau^K}^{f,K} | \big] \le c_1 ||f||_{C^1} \delta \E\Big[ \sup_{t\le T} (1, \bar S^K_t) \Big]$$
is bounded by \eqref{E:sup1SK}.
Therefore, with Markov's inequality, we can choose $\delta$ such that (i) holds.

For (J2b)(ii), by Proposition \ref{P:PiLBnd},
\begin{multline*}
\big| \big<\bar{M}^{f,K}\big>_{\tau^K+\zeta\wedge T} - \big<\bar{M}^{f,K}\big>_{\tau^K} \big|
\le \frac1{K^2} \int_{\tau^K}^{\tau^K+\zeta\wedge T} ( |\Pi^K_{\bar S^K_u}f|, S^K_u ) du \\
\le c_2 ||f||_{C^1}^2 \frac1{K^2} \int_{\tau^K}^{\tau^K+\zeta\wedge T} ( 1, S^K_u ) du
\le c_2 ||f||_{C^1}^2 \delta \frac1{K^2} \sup_{t\le T} (1, S^K_t)
\end{multline*}
using a similar argument as for (i).
Thus,
$$\E\Big[ \big| \big<\bar{M}^{f,K}\big>_{\tau^K+\zeta\wedge T} - \big<\bar{M}^{f,K}\big>_{\tau^K} \big| \Big]
\le c_2 ||f||_{C^1}^2 \delta \frac1{K} \E\Big[ \sup_{t\le T} (1,\bar S^K_t) \Big]. $$
The proof is then complete by \eqref{E:sup1SK} and Markov's inequality.
\end{proof}

\begin{remark} \label{R:MKto0}
From the proof of Proposition \ref{P:SbarKTight}, we can see that the martingale $\bar M^{f,K} = \frac1K M^{f,K}$ is tight for any $f\in C^1$.
Moreover, it converges to 0 as $K$ tends to infinity, since the predictable quadratic variation vanishes.
\end{remark}

\subsubsection{Convergence of $\bar S^K$ and the limiting process}

Tightness implies the existence of a subsequence that converges.
We now identify the limit of $\bar S^K$ and show the uniqueness of the limiting process.

\begin{proposition}
Suppose (C0) -- (C3) hold.
Every limit point $\mathscr{S}$ of the sequence $\bar S^K$ satisfies the equation
\begin{equation} \label{E:fSbarLimit}
(f, \mathscr{S}_t) = (f, \mathscr{S}_0) + \int_0^t (L^\infty_{\mathscr{S}_u}f , \mathscr{S}_u) du
\end{equation}
for any $f\in C^1$ and $t\in\T$, 
where $L^\infty_Sf$ is as defined in \eqref{E:Linfty}.
\end{proposition}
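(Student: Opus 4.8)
The plan is to pass to the limit in the prelimit identity \eqref{E:SbarK} along the subsequence that realises a given limit point $\mathscr{S}$. Fixing $f\in C^1$, I would introduce for each $K$ the residual
\[
\Psi^{f,K}_t := (f,\bar S^K_t) - (f,\bar S^K_0) - \int_0^t (L^\infty_{\bar S^K_u}f, \bar S^K_u)\,du,
\]
so that, adding and subtracting $L^\infty$ in \eqref{E:SbarK},
\[
\Psi^{f,K}_t = \frac1K M^{f,K}_t + \int_0^t (L^K_{\bar S^K_u}f - L^\infty_{\bar S^K_u}f, \bar S^K_u)\,du.
\]
The goal is to show the right-hand side vanishes as $K\to\infty$, while the left-hand side converges in law to the residual built from $\mathscr{S}$; the limiting residual must then be identically zero, which is precisely \eqref{E:fSbarLimit}.

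The martingale term is harmless: by Remark \ref{R:MKto0}, $\frac1K M^{f,K}$ converges to $0$, its predictable quadratic variation being $\frac1K\int_0^t(\Pi^K_{\bar S^K_u}f,\bar S^K_u)\,du = O(1/K)$ by Proposition \ref{P:PiLBnd} and \eqref{E:sup1SK}. For the operator-difference term I would estimate, using \eqref{E:LK} and \eqref{E:Linfty},
\[
\|L^K_\mu f - L^\infty_\mu f\|_{C^0} \le \|f\|_{C^1}\Big(\|h^K_\mu - h^\infty_\mu\|_{C^0} + \sum_{i\in\K}\|n^{i,K}_\mu - n^{i,\infty}_\mu\|_{C^0}\Big),
\]
so the term is bounded by $\int_0^t \|L^K_{\bar S^K_u}f - L^\infty_{\bar S^K_u}f\|_{C^0}(1,\bar S^K_u)\,du$. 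By (C2) each rate difference tends to $0$ uniformly in the age--type variable and pointwise in the measure argument; to control the random, $K$-dependent argument $\bar S^K_u$ I would upgrade this to uniformity over the measures actually visited, using the uniform Lipschitz bound (C1) to make $\{q^K\}$ equicontinuous in $\mu$ together with the mass containment (J1) and \eqref{E:sup1SK}. This should force the second term to $0$ in probability.

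It remains to identify the limit of $\Psi^{f,K}$. Invoking the Skorokhod representation theorem on the convergent subsequence, I may assume $\bar S^K\to\mathscr{S}$ almost surely in $\D(\T,\mathcal{M})$; since every jump of $\bar S^K$ has mass $O(1/K)$, the limit $\mathscr{S}$ has continuous paths, so the convergence is uniform on $\T$. The endpoint terms then give $(f,\bar S^K_t)-(f,\bar S^K_0)\to (f,\mathscr{S}_t)-(f,\mathscr{S}_0)$, using (C3) so that $\mathscr{S}_0=\bar S_0$, while the drift $\int_0^t(L^\infty_{\bar S^K_u}f,\bar S^K_u)\,du\to\int_0^t(L^\infty_{\mathscr{S}_u}f,\mathscr{S}_u)\,du$ by dominated convergence, the dominating function coming from \eqref{E:sup1SK}, once $\mu\mapsto(L^\infty_\mu f,\mu)$ is known to be continuous along the realised paths.

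The step I expect to be delicate is exactly this last continuity, and the analogous uniform-convergence claim in the second paragraph: the population-dependence of the rates is controlled in (C1) through the norm $\|\cdot\|$, which on the compact space $\S$ is the total-variation norm, whereas $\bar S^K_u\to\mathscr{S}_u$ only weakly, so the two notions must be reconciled. My plan is to exploit that the relevant measures remain in the mass-bounded, weakly compact set $\mathcal{C}(\delta_\eta)$, that $h^\infty_\mu$ and $n^{i,\infty}_\mu$ are bounded (C0) and Lipschitz (C1), and --- as in the representative parametrisation $q(i,v,(1,\mu),(g,\mu))$ of Section \ref{S:ModelApplic2}, where the dependence on $\mu$ is through integrals of continuous functions and is therefore weakly continuous --- to conclude $(L^\infty_{\bar S^K_u}f,\bar S^K_u)\to(L^\infty_{\mathscr{S}_u}f,\mathscr{S}_u)$ for almost every $u$. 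Finally I would extend the resulting identity from a countable dense family of $f$ and of times $t$ to all $f\in C^1$ and $t\in\T$ by right-continuity and the continuity of both sides, exactly as in \cite{FanEtal19}, so that every limit point satisfies \eqref{E:fSbarLimit}.
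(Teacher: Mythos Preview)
Your overall strategy matches the paper's: kill the martingale via Remark~\ref{R:MKto0}, pass the initial datum using (C3), and show the drift $\int_0^t(L^K_{\bar S^K_u}f,\bar S^K_u)\,du$ converges to $\int_0^t(L^\infty_{\mathscr{S}_u}f,\mathscr{S}_u)\,du$. The difference lies in how the drift is decomposed. You insert the intermediate $L^\infty_{\bar S^K_u}$, so that you first compare $L^K_\mu$ with $L^\infty_\mu$ at the \emph{moving} argument $\mu=\bar S^K_u$, and then separately argue $(L^\infty_{\bar S^K_u}f,\bar S^K_u)\to(L^\infty_{\mathscr{S}_u}f,\mathscr{S}_u)$. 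The paper instead inserts $L^K_{\mathscr{S}_u}$: it writes
\[
\|q^K_{\bar S^K_u}-q^\infty_{\mathscr{S}_u}\|_\infty \le \|q^K_{\bar S^K_u}-q^K_{\mathscr{S}_u}\|_\infty + \|q^K_{\mathscr{S}_u}-q^\infty_{\mathscr{S}_u}\|_\infty,
\]
controls the first piece by (C1), and applies (C2) to the second piece at the \emph{fixed} point $\mathscr{S}_u$. This buys something concrete: (C2) is stated only as pointwise-in-$\mu$ convergence, so evaluating it at the deterministic limit $\mathscr{S}_u$ is immediate, whereas your route forces you to upgrade (C2) to uniformity over the visited measures via the equicontinuity coming from (C1). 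That upgrade is doable, but the paper's splitting sidesteps it entirely.

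On the delicate point you flag: you are right that (C1) is Lipschitz in the norm $\|\mu\|=\sup_{\|f\|_\infty\le 1}|(f,\mu)|$, which is total variation, while the tightness argument only delivers weak convergence along the subsequence. The paper's proof glosses over this too, simply writing $\|\bar S^K_u-\mathscr{S}_u\|\to 0$ and $(L^\infty_{\mathscr{S}_u}f,\bar S^K_u-\mathscr{S}_u)\to 0$ without further comment. Your instinct to appeal to the concrete parametrisation of Section~\ref{S:ModelApplic2} (dependence through $(g,\mu)$ for continuous $g$, hence weakly continuous) is the right \emph{kind} of fix, but it is strictly narrower than the stated hypotheses (C0)--(C3); you should not present it as a proof under those hypotheses alone. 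Either strengthen the assumption to weak continuity of $\mu\mapsto q^\infty_\mu$, or note explicitly that the paper's own argument relies on the same unarticulated passage.
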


\begin{proof}
From Remark \ref{R:MKto0}, the martingale sequence $\bar M^{f,K}$ vanishes as $K$ tends to infinity.
This together with the convergence of $\bar S^K_0$ by (C3), it remains to show the convergence of
$\int_0^t (L^K_{\bar S^K_u} f, \bar S^K_u) du$ to $\int_0^t (L^\infty_{\mathscr{S}_u} f, \mathscr{S}_u) du$.
Note that, by (C1) and (C2),
$$||h^K_{\bar S^K_u} - h^\infty_{\mathscr{S}_u}||_\infty
\le ||h^K_{\bar S^K_u} - h^K_{\mathscr{S}_u}||_\infty + ||h^K_{\mathscr{S}_u} - h^\infty_{\mathscr{S}_u}||_\infty
\to 0;$$
similarly for other model parameters.
Thus, $|| L^K_{\bar S^K_u} f - L^\infty_{\mathscr{S}_u} f ||_\infty \to 0$
and
$$\int_0^t \big| \big( L^K_{\bar S^K_u} f - L^\infty_{\mathscr{S}_u} f , \bar S^K_u \big)\big| du
\le \int_0^t \big|\big| L^K_{\bar S^K_u} f - L^\infty_{\mathscr{S}_u} f \big|\big|_\infty (1,\bar S^K_u) du
\to 0$$
by dominated convergence theorem.
Note also that, for any $f\in C^1$,
$$|L^\infty_S f| \le ||f'||_\infty + c_1||f||_\infty \le c_2||f||_{C^1}.$$
Thus,
$$\int_0^t \big| \big(L^\infty_{\mathscr{S}_u} f, \bar S^K_u-\mathscr{S}_u\big) \big| du
\le \int_0^t ||L^\infty_{\mathscr{S}_u} f||_\infty ||\bar S^K_u-\mathscr{S}_u|| du 
\le c_2 ||f||_{C^1} \int_0^t ||\bar S^K_u-\mathscr{S}_u|| du$$
vanishes as $K\to\infty$.
Hence,
\begin{multline*}
\bigg| \int_0^t \big( L^K_{\bar S^K_u} f, \bar S^K_u \big) du - \int_0^t \big( L^\infty_{\mathscr{S}_u}f, \mathscr{S}_u\big) du \bigg| \\
\le \int_0^t \bigg( \big| \big(L^K_{\bar S^K_u} f - L^\infty_{\mathscr{S}_u} f, \bar S^K_u\big) \big| + \big| \big(L^\infty_{\mathscr{S}_u} f, \bar S^K_u - \mathscr{S}_u\big) \big| \bigg) du
\end{multline*}
converges to zero.
\end{proof}

It remains to show the uniqueness of the solution to \eqref{E:fSbarLimit}.
To do this, we introduce an alternative representation to \eqref{E:fSbarLimit}.

\begin{proposition} \label{P:AltRepS}
For $\phi \in C^1$, define the shift operator $\widehat\varTheta_r$ such that
for each $r\in \T$ and $s=(i,v)$, $v\in[0,\omega-r]$, 
$\widehat\varTheta_r\phi(s) = \widehat\varTheta_r\phi(i,v) = \phi(i,v+r)$ 
and $\widehat\varTheta_r\phi \in C^0$ with $||\widehat\varTheta_r\phi||_{C^0} \le c||\phi||_{C^0}$.
(This is possible by reflecting $\phi$ about $v=\omega-r$, 
in which case $||\widehat\varTheta_r\phi||_{C^0} \le ||\phi||_{C^0}$.)
Then \eqref{E:fSbarLimit} is equivalent to the following:
for $\phi\in C^1$,
\begin{equation} \label{E:phiSbarLimit}
(\phi,\mathscr{S}_t) = (\widehat\varTheta_t\phi, \mathscr{S}_0) + \int_0^t \bigg( -h^\infty_{\mathscr{S}_u} \widehat\varTheta_{t-u}\phi + \sum_{i\in\K} \widehat\varTheta_{t-u}\phi(i,0) n^{i,\infty}_{\mathscr{S}_u} , \mathscr{S}_u \bigg) du.
\end{equation}
\end{proposition}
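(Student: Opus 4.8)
The plan is to exploit the fact that the shift operator $\widehat\varTheta$ is engineered precisely to absorb the age–transport term $f'$ in $L^\infty_S$. The key observation is the elementary identity $\partial_r\,\widehat\varTheta_r\phi=(\widehat\varTheta_r\phi)'$ on the region $v\in[0,\omega-r]$, so that the time-dependent test function $f_u:=\widehat\varTheta_{t-u}\phi$ satisfies $\partial_u f_u=-(f_u)'$. Feeding this into $L^\infty$ the transport terms cancel: $\partial_u f_u+L^\infty_{\mathscr{S}_u}f_u=-h^\infty_{\mathscr{S}_u}\widehat\varTheta_{t-u}\phi+\sum_{i\in\K}\widehat\varTheta_{t-u}\phi(i,0)\,n^{i,\infty}_{\mathscr{S}_u}$, which is exactly the integrand of \eqref{E:phiSbarLimit}. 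I would prove the two implications separately.

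For \eqref{E:fSbarLimit}$\Rightarrow$\eqref{E:phiSbarLimit} I would first upgrade \eqref{E:fSbarLimit} to a time-dependent weak formulation: for $f\in C^{1,1}(\S\times\T)$, $(f_t,\mathscr{S}_t)=(f_0,\mathscr{S}_0)+\int_0^t(\partial_t f_u+L^\infty_{\mathscr{S}_u}f_u,\mathscr{S}_u)\,du$, where $\partial_t f$ is the time derivative. This follows from \eqref{E:fSbarLimit} by freezing the test function on a fine partition of $[0,t]$ and passing to the limit; the needed input is that $u\mapsto(g,\mathscr{S}_u)$ is Lipschitz for each fixed $g\in C^1$, which is itself a consequence of \eqref{E:fSbarLimit}, the bound $|L^\infty_{S}g|\le c\|g\|_{C^1}$ established above, and the finiteness of $(1,\mathscr{S}_u)$ (obtained by taking $g=1$ in \eqref{E:fSbarLimit}). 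Applying the time-dependent equation with $f_u=\widehat\varTheta_{t-u}\phi$ and invoking the cancellation yields \eqref{E:phiSbarLimit}, since $f_t=\widehat\varTheta_0\phi=\phi$ and $f_0=\widehat\varTheta_t\phi$. One point to check is the reflection used to extend $\widehat\varTheta_{t-u}\phi$ to all of $\A$: on the support of $\mathscr{S}_u$ the ages do not exceed $u+a^*\le\omega-(t-u)$ for $t\le T$, so the pairing only sees the genuine shift $\phi(i,v+t-u)$ and the identity $\partial_u f_u=-(f_u)'$ holds where it is integrated.

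For the converse \eqref{E:phiSbarLimit}$\Rightarrow$\eqref{E:fSbarLimit} I would differentiate in $t$ the right-hand side of \eqref{E:phiSbarLimit}, call it $R(t)$. Leibniz' rule produces a boundary contribution at $u=t$, namely $\big(-h^\infty_{\mathscr{S}_t}\phi+\sum_{i}\phi(i,0)n^{i,\infty}_{\mathscr{S}_t},\mathscr{S}_t\big)$ using $\widehat\varTheta_0\phi=\phi$, together with the $t$-derivatives of the term $(\widehat\varTheta_t\phi,\mathscr{S}_0)$ and of the integrand; since $\partial_t\widehat\varTheta_{t-u}\phi=\widehat\varTheta_{t-u}(\phi')$ and $\partial_t\widehat\varTheta_t\phi=\widehat\varTheta_t(\phi')$, these remaining pieces reassemble into $R_{\phi'}(t)$, i.e. the right-hand side of \eqref{E:phiSbarLimit} with $\phi$ replaced by $\phi'$, which equals $(\phi',\mathscr{S}_t)$ by \eqref{E:phiSbarLimit} itself. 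Hence $\frac{d}{dt}(\phi,\mathscr{S}_t)=(\phi'-h^\infty_{\mathscr{S}_t}\phi+\sum_i\phi(i,0)n^{i,\infty}_{\mathscr{S}_t},\mathscr{S}_t)=(L^\infty_{\mathscr{S}_t}\phi,\mathscr{S}_t)$, and integrating from $0$ recovers \eqref{E:fSbarLimit}. Invoking \eqref{E:phiSbarLimit} for $\phi'$ requires $\phi\in C^2$; for general $\phi\in C^1$ I would approximate $\phi$ in the $C^1$ norm by $C^2$ functions and pass to the limit, using the finite mass of $\mathscr{S}_u$ to control the error.

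The main obstacle is the rigorous handling of the moving test function: establishing and applying the two-variable weak equation, equivalently justifying the differentiation through the pairing $(\widehat\varTheta_{t-u}\phi,\mathscr{S}_u)$ in which both factors vary with $u$, together with the bookkeeping at the age boundary $v=\omega$ via the reflection in $\widehat\varTheta$. Both reduce to routine approximation once the weak continuity of $u\mapsto\mathscr{S}_u$ and the uniform estimates of Proposition \ref{P:PiLBnd} and \eqref{E:1SK} are in hand; the cancellation of the transport term is what makes the identity work and is the conceptual heart of the argument.
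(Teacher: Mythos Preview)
Your argument is correct and follows the same conceptual line as the paper: pass to a two-variable weak formulation, then substitute $f_u=\widehat\varTheta_{t-u}\phi$ so that the transport term cancels; for the converse, recover the missing $(\phi',\mathscr{S}_t)$ by applying the shifted equation to $\phi'$ (needing $\phi\in C^2$ and then approximating). The only differences are in execution rather than substance. For the forward step, the paper obtains the time-dependent equation by first checking it on products $g(s,u)=f(s)\varphi(u)$ and then invoking the Monotone Class Theorem, whereas you propose a partition/Riemann-sum argument; both are standard and equivalent here. For the converse, the paper substitutes the identity $\widehat\varTheta_{t-u}\phi=\phi+\int_u^t\widehat\varTheta_{w-u}\phi'\,dw$ directly into \eqref{E:phiSbarLimit} and applies Fubini, which after swapping the order of integration makes \eqref{E:phiSbarLimit} for $\phi'$ appear as the inner integral; your differentiation in $t$ is the same computation read in the other direction. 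Your explicit remark that $\mathscr{S}_u$ is supported in $[0,u+a^*]\subset[0,\omega-(t-u)]$, so only the genuine shift is ever integrated, is a useful clarification that the paper leaves implicit.
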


\begin{proof}
Let $g(s,t) = f(s)\varphi(t)$,
where $f$ and $\varphi$ are functions in $C^1(\T)$ and $C^1(\S)$ respectively.
Then,
\begin{align*}
(g(\cdot,t),\mathscr{S}_t) 
&= \varphi(0) (f,\mathscr{S}_0) + \int_0^t \varphi'(u) (f,\mathscr{S}_u) du + \int_0^t \varphi(u) (L^\infty_{\mathscr{S}_u} f, \mathscr{S}_u) du \\
&= (g(\cdot,0),\mathscr{S}_0) + \int_0^t \Big( \partial_1g(\cdot,u) + \partial_2g(\cdot,u) - h^\infty_{\mathscr{S}_u}g(\cdot,u) 
+ \sum_{i\in\K} g(i,0,u) n^{i,\infty}_{\mathscr{S}_u} , \mathscr{S}_u \Big) du. 
\end{align*}
By the Monotone Class Theorem (e.g. \cite[I.22.1]{DelMey78}), 
this holds for any $g \in C^{1,1}(\S\times\T)$.
Now, fix $t\in\T$ and $\phi\in C^1(\S)$, take $g(s,u) = \widehat\varTheta_{t-u}\phi(s) = \phi(i,v+t-u)$ for $s\in\S$ and $u\in[0,t]$, then we obtain \eqref{E:phiSbarLimit}.
From \eqref{E:phiSbarLimit}, we can also recover \eqref{E:fSbarLimit} by noting that
$\widehat\varTheta_{t-u}\phi(s) = \phi(s) + \int_u^t \widehat\varTheta_{w-u} \phi'(s) dw$
and applying Fubini's theorem.
\end{proof}

Now, we can show the uniqueness of the solution to \eqref{E:fSbarLimit}
by showing the uniqueness of the solution to \eqref{E:phiSbarLimit}.

\begin{proposition}
If $\mathscr{S}^1$ and $\mathscr{S}^2$ both satisfy \eqref{E:phiSbarLimit} with $\mathscr{S}^1_0=\mathscr{S}^2_0$, then $\mathscr{S}^1=\mathscr{S}^2$.
\end{proposition}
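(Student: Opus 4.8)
The plan is to show uniqueness by a Gronwall estimate on the total-variation distance between two solutions. Write $D_t := \mathscr{S}^1_t-\mathscr{S}^2_t$, a finite signed measure on $\S$, and recall that by the Riesz representation theorem on the compact space $\S$ the norm $||\cdot||$ coincides with the total-variation norm, so that $|(g,\mu)|\le ||g||_\infty\,||\mu||$ for \emph{every} bounded measurable $g$ and every finite signed measure $\mu$ (no continuity of $g$ in $s$ is needed). Since $\mathscr{S}^1_0=\mathscr{S}^2_0$, the boundary terms in \eqref{E:phiSbarLimit} cancel, and for $\phi\in C^1$ with $||\phi||_\infty\le1$ the difference becomes
\begin{equation*}
(\phi,D_t)=\int_0^t\big[(\Psi^1_u,\mathscr{S}^1_u)-(\Psi^2_u,\mathscr{S}^2_u)\big]\,du,\qquad \Psi^j_u:=-h^\infty_{\mathscr{S}^j_u}\,\widehat\varTheta_{t-u}\phi+\sum_{i\in\K}\widehat\varTheta_{t-u}\phi(i,0)\,n^{i,\infty}_{\mathscr{S}^j_u}.
\end{equation*}

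First I would record an a priori mass bound. Taking $\phi=1$ in \eqref{E:phiSbarLimit} and using $|L^\infty_S1|=|-h^\infty_S+\sum_i n^{i,\infty}_S|\le c$ (which follows from (C0)), Gronwall's inequality gives $(1,\mathscr{S}^j_u)\le(1,\bar S_0)e^{cu}\le C_T$ on $\T$. In particular $||D_u||\le 2C_T$ is bounded, and since $\S$ is a compact metric space, $C^0(\S)$ is separable, so $||D_u||$ is a countable supremum of the measurable maps $u\mapsto(f,D_u)$ and is itself measurable. Next I would split the integrand as $(\Psi^1_u-\Psi^2_u,\mathscr{S}^1_u)+(\Psi^2_u,D_u)$. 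For the first term, the normed Lipschitz bound (C1)—which passes to the limiting parameters through the uniform convergence (C2), and extends from $b,h,m$ to $n^{i,\infty}$ via (C0) because $n=b\widecheck m+h\widehat m$ is a product of bounded Lipschitz factors—yields $||\Psi^1_u-\Psi^2_u||_\infty\le c\,||\phi||_\infty\,||D_u||$, hence $|(\Psi^1_u-\Psi^2_u,\mathscr{S}^1_u)|\le c\,C_T\,||D_u||$ using $(1,\mathscr{S}^1_u)\le C_T$. For the second term, boundedness (C0) and $||\widehat\varTheta_{t-u}\phi||_\infty\le||\phi||_\infty$ give $||\Psi^2_u||_\infty\le c\,||\phi||_\infty$, so $|(\Psi^2_u,D_u)|\le c\,||D_u||$. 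Combining, $|(\phi,D_t)|\le C\int_0^t||D_u||\,du$ uniformly over $\phi\in C^1$ with $||\phi||_\infty\le1$.

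Finally I would upgrade this to a bound on $||D_t||$ itself. Since $C^1(\S)$ is dense in $C^0(\S)$ in the uniform norm (Stone--Weierstrass on the compact $\S$), any continuous $f$ with $||f||_\infty\le1$ is a uniform limit of $\phi_n\in C^1$ with $||\phi_n||_\infty\to||f||_\infty\le1$; as $D_t$ is a finite measure, $(\phi_n,D_t)\to(f,D_t)$, so $|(f,D_t)|\le C\int_0^t||D_u||\,du$, and taking the supremum over all such $f$ gives $||D_t||\le C\int_0^t||D_u||\,du$. Because $||D_0||=0$ and $u\mapsto ||D_u||$ is bounded and measurable, Gronwall's inequality forces $||D_t||=0$ for all $t\in\T$, i.e. $\mathscr{S}^1=\mathscr{S}^2$.

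\textbf{Main obstacle.} The routine calculations (the Lipschitz estimate, the mass bound, the Gronwall step) are standard; the point requiring care is the mismatch between the two classes of test functions. Equation \eqref{E:phiSbarLimit} is only available for $\phi\in C^1$, whereas $||D_t||$ is a supremum over continuous functions, so the crux is the density/approximation argument bridging $C^1$ and $C^0$, together with the observation that identifying $||\cdot||$ with the total-variation norm lets one pair the (possibly non-continuous in $s$) parameter functions $\Psi^j_u$ against the signed measures without assuming regularity of $h^\infty$ and $n^{i,\infty}$ in the age variable.
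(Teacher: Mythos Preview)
Your proposal is correct and follows essentially the same route as the paper's proof: split the integrand as $(\Psi^1_u-\Psi^2_u,\mathscr{S}^1_u)+(\Psi^2_u,D_u)$, pass the Lipschitz bound (C1) to the limiting parameters via (C2), use the a priori mass bound from Gronwall, and then upgrade from $C^1$ to $C^0$ test functions by density before applying Gronwall to $||D_t||$. Your additional remarks on measurability of $u\mapsto||D_u||$ and on interpreting $||\cdot||$ as total variation (so that the possibly non-continuous $\Psi^j_u$ can be paired with the signed measure) are valid refinements that the paper leaves implicit.
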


\begin{proof}
Let
$$\widehat L^\infty_Sf = -h^\infty_Sf + \sum_{i\in\K}f(i,0)n^{i,\infty}_S$$
so that \eqref{E:phiSbarLimit} becomes 
$(\phi,\mathscr{S}_t) = (\widehat\varTheta_t\phi, \mathscr{S}_0) + \int_0^t \big( \widehat L^\infty_{\mathscr{S}_u} \widehat\varTheta_{t-u}\phi, \mathscr{S}_u \big) du$.
We have, for $\phi\in C^1$,
\begin{align*}
|(\phi,\mathscr{S}^1_t-\mathscr{S}^2_t)| 
&\le \int_0^t \big|\big( \widehat L^\infty_{\mathscr{S}^1_u} \widehat\varTheta_{t-u} \phi - \widehat L^\infty_{\mathscr{S}^2_u} \widehat\varTheta_{t-u} \phi, \mathscr{S}^1_u\big)\big| + \big|\big( \widehat L^\infty_{\mathscr{S}^2_u} \widehat\varTheta_{t-u} \phi, \mathscr{S}^1_u-\mathscr{S}^2_u \big)\big| du \\
&\le \int_0^t || \widehat L^\infty_{\mathscr{S}^1_u} \widehat\varTheta_{t-u} \phi - \widehat L^\infty_{\mathscr{S}^2_u} \widehat\varTheta_{t-u} \phi||_\infty (1, \mathscr{S}^1_u) + || \widehat L^\infty_{\mathscr{S}^2_u} \widehat\varTheta_{t-u} \phi||_\infty ||\mathscr{S}^1_u-\mathscr{S}^2_u|| du.
\end{align*}
Now, by (C1),
\begin{align*}
||h^\infty_{\mu} - h^\infty_{\nu}||_\infty
&\le ||h^\infty_{\mu} - h^K_{\mu}||_\infty + ||h^K_{\mu} - h^K_{\nu}||_\infty + ||h^K_{\nu} - h^\infty_{\nu}||_\infty \\
&\le ||h^\infty_{\mu} - h^K_{\mu}||_\infty + c||\mu-\nu|| + ||h^K_{\nu} - h^\infty_{\nu}||_\infty;
\end{align*}
taking the limit $K\to\infty$ on both sides, we obtain
$||h^\infty_{\mu} - h^\infty_{\nu}||_\infty \le c_1||\mu-\nu||$.
Similarly for other model parameters.
Thus, $||\widehat L^\infty_{\mu}f - \widehat L^\infty_{\nu}f||_\infty \le c_2 ||f||_\infty ||\mu-\nu||$.
We also have by (C0) and (C2) that
$||\widehat L^\infty_{\mu}f||_\infty \le c_3||f||_\infty$.
Finally, since also $||\widehat\varTheta_r\phi||_{C^0} \le c ||\phi||_{C^0}$ for any $r\in\T$,
we have
$$|(\phi,\mathscr{S}^1_t-\mathscr{S}^2_t)| \le \int_0^t \big( c_4 ||\phi||_{C^0} ||\mathscr{S}^1_u-\mathscr{S}^2_u|| (1,\mathscr{S}^1_u) + c_5 ||\phi||_{C^0} ||\mathscr{S}^1_u-\mathscr{S}^2_u|| \big) du.$$
Using (C0) and Gronwall's inequality, we can show that
$(1,\mathscr{S}^1_u) \le (1,\mathscr{S}^1_0)e^{c_6u}$.
Therefore, it follows that
\begin{equation} \label{E:LLNUniq}
|(\phi,\mathscr{S}^1_t-\mathscr{S}^2_t)| \le c_7 ||\phi||_{C^0} \big( 1+ (1,\mathscr{S}^1_0)e^{c_6T} \big) \int_0^t  ||\mathscr{S}^1_u-\mathscr{S}^2_u|| du.
\end{equation}
Note that for any $\phi\in C^0$, there exists a sequence $\{\phi_n\}$ such that $\phi_n\in C^1$ and $\phi_n$ converges uniformly to $\phi$.
Thus, \eqref{E:LLNUniq} holds for any $\phi\in C^0$,
and $||\mathscr{S}^1_t-\mathscr{S}^2_t|| =0$ by Gronwall's inequality.
\end{proof}

Putting all together, we established the convergence of $\bar S^K$ as stated in Theorem \ref{T:LLN}.


\subsection{Proof of the Central Limit Theorem} \label{S:CLTProof}

This is in line with the single-type case \cite{FanEtal19} and 
starts from an alternative representation of $Z^K_t$, which is a similar trick as in Proposition \ref{P:AltRepS}.

\subsubsection{Alternative representation} \label{S:AltRep}

\begin{remark} \label{R:ShiftOp}
Like in \cite{FanEtal19}, we take $\varTheta_r\phi$ as the function on $\S$ such that
for each $r\in\T$ and $s=(i,v)$, $v\in[0,\omega-r]$, 
$\varTheta_r\phi(s) = \varTheta_r\phi(i,v) = \phi(i,v+r)$ 
and, whenever $\phi\in W^j$, then $\varTheta_r\phi \in W^j$ with
\begin{equation} \label{E:ThetaPhiNorm}
||\varTheta_r\phi||_{W^j} \le c ||\phi||_{W^j}.
\end{equation}
The existence of such a function was proved in \cite{FanEtal19}.
\end{remark}

Equation \eqref{E:ZKL} can also be extended to
test functions dependent on $t$, that is, for $f\in C^{1,1}(\S\times\T)$.
Then, for fix $t$, by taking $f(s,u) = \varTheta_{t-u}\phi(s)$ for $u\le t$,
we have the following equation.

\begin{proposition} \label{P:ZAltRep}
For $\phi\in C^1$ and $t\in\T$,
\begin{multline} \label{E:phiZK}
(\phi, Z^K_t) = (\varTheta_t\phi, Z^K_0) \\
+ \sqrt{K} \int_0^t \Big( - (h_{\bar S_u^K}^K - h^\infty_{\bar{S}_u}) \varTheta_{t-u}\phi + \sum_{i\in\K} \varTheta_{t-u}\phi(i,0) (n_{\bar S_u^K}^{i,K} - n^{i,\infty}_{\bar{S}_u}) , \bar{S}_u \Big) du \\
+ \int_0^t \Big( - h_{\bar S_u^K}^K \varTheta_{t-u}\phi + \sum_{i\in\K} \varTheta_{t-u}\phi(i,0) n_{\bar S_u^K}^{i,K} , Z_u^K \Big) du
+ \int_0^t (\varTheta_{t-u}\phi, d\tilde M^K_u),
\end{multline}
where $\tilde M=\frac1{\sqrt{K}}M$ and $M$ is the measure as defined in Remark \ref{R:MeasureM}.
\end{proposition}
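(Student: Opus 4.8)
The plan is to repeat, at the level of the fluctuation process, the time-shift device already used for the Law of Large Numbers in Proposition~\ref{P:AltRepS}. The representation \eqref{E:ZKL} is stated for time-independent $f\in C^1$, so the first task is to extend it to test functions $f\in C^{1,1}(\S\times\T)$. Following the time-dependent extension of \eqref{E:fSt} recorded earlier and the opening of the proof of Proposition~\ref{P:AltRepS}, I would first establish the extension for product functions $f(s,u)=\varphi(u)\psi(s)$, with $\varphi\in C^1(\T)$ and $\psi\in C^1(\S)$, by applying \eqref{E:ZKL} to the fixed function $\psi$ and using the product rule (integration by parts) in $u$ against $\varphi$; the Monotone Class Theorem (\cite[I.22.1]{DelMey78}) then promotes the identity to arbitrary $f\in C^{1,1}(\S\times\T)$. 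Writing $f_u=f(\cdot,u)$ and letting $\partial_2 f_u$ denote the time derivative, the extended identity reads
$$(f_t,Z^K_t)=(f_0,Z^K_0)+\sqrt K\int_0^t\big(L^K_{\bar S^K_u}f_u-L^\infty_{\bar S_u}f_u,\bar S_u\big)\,du+\int_0^t\big(\partial_2 f_u+L^K_{\bar S^K_u}f_u,Z^K_u\big)\,du+\tilde M^{f,K}_t.$$
The time derivative drops out of the first integral because the contributions $(\partial_2 f_u,\bar S_u)$ arising from the dynamics of $\bar S^K$ and of $\bar S$ cancel, leaving only the operator difference acting on the common $f_u$; it therefore survives only in the $Z^K_u$ integral. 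For the martingale term I would invoke Remark~\ref{R:MeasureM} to write $\tilde M^{f,K}_t=\int_0^t(f_u,d\tilde M^K_u)$ with $\tilde M=\tfrac1{\sqrt K}M$, which is what makes the final integral in \eqref{E:phiZK} well defined via the $C^0(\S\times\T)$ martingale integral of that remark.

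The crux is then the substitution $f(s,u)=\varTheta_{t-u}\phi(s)$ for fixed $t$ and $\phi\in C^1$. Since $\varTheta_{t-u}\phi(i,v)=\phi(i,v+t-u)$, one has $\partial_2 f_u=-(\varTheta_{t-u}\phi)'$, whereas the defining formula \eqref{E:LK} contributes $f_u'=+(\varTheta_{t-u}\phi)'$ inside $L^K_{\bar S^K_u}f_u$. These two derivative terms cancel, so that $\partial_2 f_u+L^K_{\bar S^K_u}f_u=\widehat L^K_{\bar S^K_u}\varTheta_{t-u}\phi$ with $\widehat L$ as in \eqref{E:LhatK}; likewise, in the first integral the common derivative $f_u'$ cancels between $L^K_{\bar S^K_u}$ and $L^\infty_{\bar S_u}$, leaving $-(h^K_{\bar S^K_u}-h^\infty_{\bar S_u})\varTheta_{t-u}\phi+\sum_{i\in\K}\varTheta_{t-u}\phi(i,0)(n^{i,K}_{\bar S^K_u}-n^{i,\infty}_{\bar S_u})$. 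Collecting these, together with $(f_0,Z^K_0)=(\varTheta_t\phi,Z^K_0)$ and the rewritten martingale term, yields precisely \eqref{E:phiZK}.

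I expect the only genuinely delicate point to be the regularity of the time-shifted test function: the whole scheme requires $(s,u)\mapsto\varTheta_{t-u}\phi(s)$ to be an admissible test function for the extended $C^{1,1}$ identity, so that $\partial_2 f_u$ exists and equals $-(\varTheta_{t-u}\phi)'$ exactly. Because the argument $v+t-u$ may exceed $\omega$, $\varTheta_r$ must be realised through the boundary-extension construction of Remark~\ref{R:ShiftOp}, inherited from \cite{FanEtal19}, which guarantees that the shift preserves the relevant regularity and obeys the norm bound \eqref{E:ThetaPhiNorm}. Granting this construction (and the continuity needed to apply the Monotone Class extension to the measure-valued martingale integral), the remaining steps are the routine cancellations above, and no further probabilistic estimates are required at this stage.
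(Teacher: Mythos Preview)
Your proposal is correct and follows essentially the same route as the paper: first extend \eqref{E:ZKL} to $C^{1,1}(\S\times\T)$ test functions via products $\varphi(u)\psi(s)$ and the Monotone Class Theorem (obtaining exactly the paper's intermediate identity \eqref{E:gZK}), then substitute $g(s,u)=\varTheta_{t-u}\phi(s)$ so that $\partial_1 g+\partial_2 g=0$ collapses the $Z^K_u$-integrand to $\widehat L^K_{\bar S^K_u}\varTheta_{t-u}\phi$. Your added remarks on why $\partial_2 f_u$ survives only in the $Z^K_u$ integral and on the regularity of the shifted test function are accurate and, if anything, slightly more explicit than the paper's terse ``taking $g(s,u)=\varTheta_{t-u}\phi(s)$ we conclude''.
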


\begin{proof}
Let $f\in C^1(\S)$ and $\varphi\in C^1(\T)$.
For $g:\S\times\T\to\R$ such that $g(s,t) = \varphi(t)f(s)$,
\begin{align*}
&(g(\cdot,t),Z^K_t) 
= (\varphi(0)f,Z^K_0) + \int_0^t \varphi'(u)(f,Z^K_u)du \\
&\q\q\q+ \sqrt{K} \int_0^t \varphi(u) \Big( - (h_{S_u^K}^K - h^\infty_{\bar{S}_u}) f + \sum_{i=1}^k f(i,0) (n_{S_u^K}^{i,K} - n^{i,\infty}_{\bar{S}_u}) , \bar{S}_u \Big) du \\
&\q\q\q+ \int_0^t \varphi(u) \Big( f' - h_{S_u^K}^K f + \sum_{i=1}^k f(i,0) n_{S_u^K}^{i,K} , Z_u^K \Big) du
+ \int_0^t \varphi(u) d(f,\tilde M^K_u).
\end{align*}
We shall write $\int_0^t (\varphi(u)f,d\tilde M^K_u)$ for $\int_0^t \varphi(u) d(f,\tilde M^K_u)$.
Then, we have
\begin{equation} \label{E:gZK}
\begin{split}
&(g(\cdot,t),Z^K_t) = (g(\cdot,0),Z^K_0) \\
&\q\q+ \sqrt{K} \int_0^t \Big( - (h_{S_u^K}^K - h^\infty_{\bar{S}_u}) g(\cdot,u) + \sum_{i=1}^k g((i,0),u) (n_{S_u^K}^{i,K} - n^{i,\infty}_{\bar{S}_u}) , \bar{S}_u \Big) du \\
&\q\q+ \int_0^t \Big( \partial_1g(\cdot,u) + \partial_2g(\cdot,u) - h_{S_u^K}^K g(\cdot,u) + \sum_{i=1}^k g((i,0),u) n_{S_u^K}^{i,K} , Z_u^K \Big) du \\
&\q\q+ \int_0^t (g(\cdot,u), d\tilde M^K_u).
\end{split}
\end{equation}
By the Monotone Class Theorem (e.g. \cite[I.22.1]{DelMey78}), \eqref{E:gZK} holds for $g\in C^{1,1}(\S\times\T)$.
Now, taking $g(s,u) = \varTheta_{t-u}\phi(s) = \varTheta_{t-u}\phi(i,v) := \phi(i,v+t-u)$,
we can conclude \eqref{E:phiZK}.
\end{proof}

\subsubsection{Preliminary estimates}

Recall the operator $\widehat L^K_S$ defined in \eqref{E:LhatK} and its limit $\widehat L^\infty_S$.

\begin{proposition} \label{P:ParaBound}
Suppose (A2) and (A3) hold.
Then, for any $f\in W^j$, $j\in\N$, and $t\in \T$,
\begin{equation} \label{E:KDiffLBnd}
\big|\big|\sqrt{K}(\widehat L^K_{\bar S^K_t}-\widehat L^\infty_{\bar S_t}) f\big|\big|_\infty
\le c (1+ ||Z^K_t||_{W^{-4}}) ||f||_{W^j}.
\end{equation}
\end{proposition}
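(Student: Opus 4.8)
The plan is to reduce the operator bound to bounds on the rescaled parameter differences $\sqrt K\,(q^K_{\bar S^K_t} - q^\infty_{\bar S_t})$ for $q=h$ and $q=n^i$, and then reassemble them through the defining formula for $\widehat L$. Writing
$$\sqrt K\,(\widehat L^K_{\bar S^K_t} - \widehat L^\infty_{\bar S_t})f = -\sqrt K\,(h^K_{\bar S^K_t} - h^\infty_{\bar S_t})f + \sum_{i\in\K} f(i,0)\,\sqrt K\,(n^{i,K}_{\bar S^K_t} - n^{i,\infty}_{\bar S_t}),$$
and using $|f(i,0)|\le ||f||_\infty$ together with the embedding $W^j\hookrightarrow C^{j-1}\hookrightarrow C^0$ (valid for $j\ge1$) to pass from $||f||_\infty$ to $||f||_{W^j}$, it suffices to prove $||\sqrt K\,(q^K_{\bar S^K_t} - q^\infty_{\bar S_t})||_\infty \le c\,(1+||Z^K_t||_{W^{-4}})$ for each of these parameters. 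Since $\K$ is finite, the finitely many resulting terms are absorbed into $c$. Note here that $\bar S^K_t-\bar S_t$ is a signed measure, hence lies in $C^{-0}\hookrightarrow W^{-4}$, so $Z^K_t=\sqrt K(\bar S^K_t-\bar S_t)\in W^{-4}$ and all the norms below make sense.

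For each such $q$ I would split
$$\sqrt K\,(q^K_{\bar S^K_t} - q^\infty_{\bar S_t}) = \sqrt K\,(q^K_{\bar S^K_t} - q^\infty_{\bar S^K_t}) + \sqrt K\,(q^\infty_{\bar S^K_t} - q^\infty_{\bar S_t}).$$
The first term is handled by (A2): since $||\sqrt K\,(q^K_{\bar S^K_t} - q^\infty_{\bar S^K_t})||_\infty \le \sqrt K\,\sup_\mu ||q^K_\mu - q^\infty_\mu||_\infty \to 0$, it is uniformly bounded by a constant. For $q=n^i$ this reduces to the cases $q=b,h,m$ by expanding $n^i = b\widecheck m^i + h\widehat m^i$ into telescoping differences, each a bounded factor times a parameter difference, using (C0) and (A2).

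The crux is the second term, where I would use (A3). The key observation is that $\mathcal{M}(\S)$ is convex, so the segment $\gamma(r)=(1-r)\bar S_t + r\bar S^K_t$, $r\in[0,1]$, lies entirely in $\mathcal{M}(\S)$, where $q^\infty$ is Fr\'echet differentiable. The mean value inequality in Banach spaces (which needs only differentiability along the segment, via a norming-functional argument, not differentiability on an open set) then gives, pointwise in $s$ and hence in $||\cdot||_\infty$,
$$||q^\infty_{\bar S^K_t} - q^\infty_{\bar S_t}||_\infty \le \sup_{r\in[0,1]} ||\partial_S q^\infty_{\gamma(r)}||_{\mathbb{L}^{-4}}\, ||\bar S^K_t - \bar S_t||_{W^{-4}} \le c\, ||\bar S^K_t - \bar S_t||_{W^{-4}},$$
using $\sup_\mu ||\partial_S q^\infty_\mu||_{\mathbb{L}^{-4}} \le c$ from (A3). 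Multiplying by $\sqrt K$ and recalling $Z^K_t = \sqrt K(\bar S^K_t - \bar S_t)$ converts the right-hand side into $c\,||Z^K_t||_{W^{-4}}$. For $q=n^i$ the product rule shows $n^{i,\infty}$ is Fr\'echet differentiable with $\sup_\mu ||\partial_S n^{i,\infty}_\mu||_{\mathbb{L}^{-4}}\le c$ (each factor bounded by (C0), each factor-derivative bounded by (A3)), so the same bound applies. Combining the two terms yields $||\sqrt K\,(q^K_{\bar S^K_t} - q^\infty_{\bar S_t})||_\infty \le c\,(1+||Z^K_t||_{W^{-4}})$, and the assembly in the first paragraph completes the proof.

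I expect the main obstacle to be the passage from pointwise-in-$\mu$ Fr\'echet differentiability with uniformly bounded derivative to a genuine Lipschitz estimate \emph{in the weak norm} $||\cdot||_{W^{-4}}$: one must verify that the mean value inequality applies along the measure segment (relying on convexity of $\mathcal{M}(\S)$ and differentiability at each point of the segment, not on an ambient open set), and that it is the weak norm $||\cdot||_{W^{-4}}$ — rather than the stronger norm $||\cdot||$ of (C1) — that appears, so that the $\sqrt K$ factor exactly produces $||Z^K_t||_{W^{-4}}$. A secondary technical point is extending (A2)--(A3) from $b,h,m$ to the composite parameter $n^i = b\widecheck m^i + h\widehat m^i$ via the product rule and telescoping.
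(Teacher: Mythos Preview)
Your proof is correct and shares the paper's overall structure: both decompose $\widehat L^K_{\bar S^K_t} - \widehat L^\infty_{\bar S_t}$ in terms of the parameter differences $q^K_{\bar S^K_t} - q^\infty_{\bar S_t}$, split these as $(q^K_{\bar S^K_t} - q^\infty_{\bar S^K_t}) + (q^\infty_{\bar S^K_t} - q^\infty_{\bar S_t})$, and control the first piece via (A2). The divergence is in the second piece. The paper inserts the linear approximation $\partial_S q^\infty_{\bar S_t}(\bar S^K_t - \bar S_t)$, bounds that linear part by $c\,||Z^K_t||_{W^{-4}}$ using the operator bound in (A3), and then asserts that the Fr\'echet remainder divided by $||\bar S^K_t - \bar S_t||_{W^{-4}}$ is bounded (from the differentiability clause of (A3) near zero, together with a crude estimate away from zero, though the paper does not spell this out). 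You instead apply the mean value inequality along the convex segment in $\mathcal{M}(\S)$ joining $\bar S_t$ to $\bar S^K_t$, obtaining $||q^\infty_{\bar S^K_t} - q^\infty_{\bar S_t}||_\infty \le c\,||\bar S^K_t - \bar S_t||_{W^{-4}}$ directly from the uniform bound $\sup_\mu ||\partial_S q^\infty_\mu||_{\mathbb{L}^{-4}} \le c$ in (A3). Your route is a bit cleaner: it uses only the derivative bound in (A3) and sidesteps any separate argument about the remainder ratio; the paper's route makes the first-order expansion explicit but leaves the boundedness of that ratio implicit. Your explicit reduction of the $n^i$ case to $b,h,m$ via telescoping and the product rule is also more carefully stated than the paper's ``similarly''.
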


\begin{proof}
By the triangle inequality,
\begin{align*}
||h^K_{\bar S^K_t} - h^\infty_{\bar S_t}||_\infty
&\le ||h^K_{\bar S^K_t} - h^\infty_{\bar S^K_t}||_\infty + ||h^\infty_{\bar S^K_t} - h^\infty_{\bar S_t} - \partial_S h^\infty_{\bar S_t}(\bar S^K_t-\bar S_t)||_\infty \\
&\q\q+ ||\partial_S h^\infty_{\bar S_t}||_{\mathbb{L}^{-4}} ||\bar S^K_t-\bar S_t||_{W^{-4}}
\end{align*}
and thus,
\begin{multline*}
\sqrt{K}||h^K_{\bar S^K_t} - h^\infty_{\bar S_t}||_\infty
\le \sqrt{K} ||h^K_{\bar S^K_t} - h^\infty_{\bar S^K_t}||_\infty \\
+ \frac{||Z^K_t||_{W^{-4}}}{||\bar S^K_t-\bar S_t||_{W^{-4}}}||h^\infty_{\bar S^K_t} - h^\infty_{\bar S_t} - \partial_S h^\infty_{\bar S_t}(\bar S^K_t-\bar S_t)||_\infty
+ c_1 ||Z^K_t||_{W^{-4}},
\end{multline*}
where the bound in the last term is due to (A3).
The first term is bounded by (A2) and the second term is bounded by (A3).
Thus, $\sqrt{K} ||h^K_{\bar S^K_t} - h^\infty_{\bar S_t}||_\infty \le c (1+ ||Z^K_t||_{W^{-4}})$ for $t\in \T$.
Similarly, $\sqrt{K} ||n^{i,K}_{\bar S^K_t} - n^{i,\infty}_{\bar S_t}||_\infty \le c (1+ ||Z^K_t||_{W^{-4}})$ for any $i\in\K$.
From these two inequalities, we have \eqref{E:KDiffLBnd} immediately.
\end{proof}

Notice that the operator $L^K_S$ maps a function from $W^j$ to $W^{j-1}$ due to the derivative $f'$.
We shall write $\mathcal{L}^{j,j'} = L(W^j,W^{j'})$ for the space of linear operators from $W^j$ to $W^{j'}$.
Then, we have the following results.

\begin{proposition} \label{P:LKNormBnd}
Suppose (A2) holds. Then we have
\begin{gather}
\tag{i} \label{E:LhatKBnd} \sup_{K,S} ||\widehat L^K_S||_{\mathcal{L}^{j,j}} \le c, \q j\le 3; \\
\tag{ii} \label{E:LKBnd} \sup_{K,S} ||L^K_S||_{\mathcal{L}^{j,j-1}} \le c, \q 2\le j\le 4.
\end{gather}
\end{proposition}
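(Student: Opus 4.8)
The plan is to prove (i) directly, by bounding $||\widehat L^K_S f||_{W^j}$ in terms of $||f||_{W^j}$ for each fixed $K$ and $S$ with a constant independent of $K$ and $S$, and then to obtain (ii) cheaply from (i). Since $||\widehat L^K_S||_{\mathcal{L}^{j,j}} = \sup_{||f||_{W^j}\le 1} ||\widehat L^K_S f||_{W^j}$, such a pointwise-in-$f$ estimate is exactly what is needed. I would split $\widehat L^K_S f = -h^K_S f + \sum_{i\in\K} f(i,0)\, n^{i,K}_S$ into its multiplication part and its boundary part and treat them separately.

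For the multiplication part I would invoke the Leibniz rule for weak derivatives: for $\iota\le j$, $(h^K_S f)^{(\iota)} = \sum_{\ell=0}^{\iota}\binom{\iota}{\ell}(h^K_S)^{(\ell)} f^{(\iota-\ell)}$, so that $||(h^K_S f)^{(\iota)}||_{L^2}\le \sum_{\ell}\binom{\iota}{\ell} ||(h^K_S)^{(\ell)}||_\infty\, ||f^{(\iota-\ell)}||_{L^2}$. By (A2) the finite-$K$ parameters satisfy $\sup_{K,S}||h^K_S||_{C^3}\le c$, which controls $||(h^K_S)^{(\ell)}||_\infty$ for all $\ell\le j\le 3$, giving $||h^K_S f||_{W^j}\le c\,||f||_{W^j}$. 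For the boundary part I would note that $n^{i,K}_S = b^K_S\widecheck m^{i,K}_S + h^K_S\widehat m^{i,K}_S$ is a sum of products of $C^3$ functions whose $C^3$ norms are uniformly bounded by (A2), whence $||n^{i,K}_S||_{C^3}\le c$ and therefore $||n^{i,K}_S||_{W^j}\le c\,||n^{i,K}_S||_{C^j}\le c$ via the embedding $C^j\hookrightarrow W^j$. The coefficients $f(i,0)$ are controlled by the Sobolev embedding $W^1\hookrightarrow C^0$ (available because $\A$ is a bounded interval and $\K$ is finite), so $|f(i,0)|\le ||f||_{C^0}\le c\,||f||_{W^1}\le c\,||f||_{W^j}$ for $j\ge1$. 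Summing the two parts over the finite type set $\K$ yields (i) for $1\le j\le 3$.

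Part (ii) then follows from the decomposition $L^K_S f = f' + \widehat L^K_S f$. Differentiation maps $W^j$ into $W^{j-1}$ with $||f'||_{W^{j-1}}\le ||f||_{W^j}$, since $||f'||_{W^{j-1}}^2 = \sum_{\iota=1}^{j}\sum_{i\in\K}\int_\A (f^{(\iota)})^2\,dv \le ||f||_{W^j}^2$. Applying (i) with index $j-1$, legitimate precisely when $1\le j-1\le 3$, i.e. $2\le j\le 4$, gives $||\widehat L^K_S f||_{W^{j-1}}\le c\,||f||_{W^{j-1}}\le c\,||f||_{W^j}$, and the triangle inequality closes the bound.

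The main delicacy is the boundary term $f(i,0)\, n^{i,K}_S$: the point evaluation $f\mapsto f(i,0)$ is not continuous on $W^0=L^2$, which is exactly why part (i) must be restricted to $j\ge1$, and it is the one-dimensional Sobolev embedding on the compact age interval that rescues it. The other structural constraint, the ceilings $j\le 3$ in (i) and $j\le 4$ in (ii), is forced by (A2) supplying only uniform $C^3$ (not $C^4$) control of the finite-$K$ parameters $b^K,h^K,m^K$, the $C^4$ bound there being available only for the limiting parameters. Beyond this, the argument is routine bookkeeping of the Leibniz rule and the embedding constants, all of which are independent of $K$ and $S$.
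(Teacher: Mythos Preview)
Your proof is correct and follows essentially the same approach as the paper: bound the multiplication term via the product estimate $||h^K_S f||_{W^j}\le c\,||h^K_S||_{C^j}||f||_{W^j}$ (which you derive explicitly from the Leibniz rule, while the paper states it as a lemma), control the boundary term via the Sobolev embedding $W^1\hookrightarrow C^0$ for the point evaluation and the $C^3$ bound on $n^{i,K}_S$, and then obtain (ii) from (i) at index $j-1$ plus $||f'||_{W^{j-1}}\le ||f||_{W^j}$. Your version is in fact more careful than the paper's in two respects: you make explicit that $j\ge 1$ is needed for point evaluation to be continuous, and you use the triangle inequality in (ii) where the paper writes an equality sign.
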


\begin{proof}
First, note that if $f\in C^j$ and $g\in W^j$, then 
\begin{equation} \label{E:fgNorm}
||fg||_{W^j} \le c ||f||_{C^j} ||g||_{W^j}.
\end{equation}
The triangle inequality then yields 
$$||\widehat L^K_{\bar S^K_t} f||_{W^j}
\le ||h^K_{\bar S^K_t}||_{C^j}||f||_{W^j} + ||f||_\infty \sum_{i\in\K} ||n^{i,K}_{\bar S^K_t}||_{W^j}
\le c_1 ||f||_{W^{j}}$$
by (A2) and embedding.
Thus, $||\widehat L^K_{\bar S^K_t}||_{\mathcal{L}^{j,j}} \le c_1$ and \eqref{E:LhatKBnd} follows.
For \eqref{E:LKBnd},
$$||L^K_S f||_{W^{j-1}} = ||f'||_{W^{j-1}} + ||\widehat L^K_Sf||_{W^{j-1}}
\le ||f||_{W^{j}} + c_1 ||f||_{W^{j-1}}
\le c_2 ||f||_{W^{j}},$$
by \eqref{E:LhatKBnd} and again embedding.
\end{proof}

Now, define the operator $\Lambda^K_t$ as
$$\Lambda^K_t f = \sqrt{K}\big( (\widehat L^K_{\bar S^K_t}-\widehat L^\infty_{\bar S_t}) f, \bar S_t\big)
+ \big(L^K_{\bar S^K_t}f, Z^K_t\big).$$
Then, Propositions \ref{P:ParaBound} and \ref{P:LKNormBnd} imply the following.

\begin{corollary} \label{C:Lambdaf}
Suppose that (A2) and (A3) hold.
Let $2\le j\le 4$. For $t\in\T$,
$$||\Lambda^K_t||_{W^{-j}} \le c_1 \big(1+ (1, \bar S_0)e^{c_2t} \big) \big(1+ ||Z^K_t||_{W^{-(j-1)}} \big).$$
\end{corollary}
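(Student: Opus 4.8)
The plan is to read the bound off directly from the two preceding propositions, treating $\Lambda^K_t$ as an element of $W^{-j}$ and estimating its two defining summands separately. I would start from $||\Lambda^K_t||_{W^{-j}} = \sup_{||f||_{W^j}\le1} |\Lambda^K_t f|$ and split, for fixed $f$ in the unit ball of $W^j$,
$$\Lambda^K_t f = \sqrt{K}\big((\widehat L^K_{\bar S^K_t}-\widehat L^\infty_{\bar S_t})f,\bar S_t\big) + \big(L^K_{\bar S^K_t}f,Z^K_t\big),$$
so that it suffices to bound each term uniformly over that ball.

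For the first term, since $\bar S_t$ is a nonnegative finite measure, $\big|\big((\widehat L^K_{\bar S^K_t}-\widehat L^\infty_{\bar S_t})f,\bar S_t\big)\big| \le ||(\widehat L^K_{\bar S^K_t}-\widehat L^\infty_{\bar S_t})f||_\infty\,(1,\bar S_t)$, and after multiplying by $\sqrt K$ I would invoke Proposition \ref{P:ParaBound} to obtain $||\sqrt K(\widehat L^K_{\bar S^K_t}-\widehat L^\infty_{\bar S_t})f||_\infty \le c(1+||Z^K_t||_{W^{-4}})||f||_{W^j}$. The total mass $(1,\bar S_t)$ is controlled by taking $f=1$ in \eqref{E:fSInfty}, using $|L^\infty_{\bar S_u}1|\le c$ together with Gronwall's inequality exactly as in the uniqueness argument, which yields $(1,\bar S_t)\le(1,\bar S_0)e^{c_2 t}$. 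Finally, because $2\le j\le4$, the Hilbert--Schmidt chain $W^{-(j-1)}\hookrightarrow\cdots\hookrightarrow W^{-4}$ lets me replace $||Z^K_t||_{W^{-4}}$ by $c\,||Z^K_t||_{W^{-(j-1)}}$. Collecting these estimates bounds the first term by $c(1,\bar S_0)e^{c_2 t}(1+||Z^K_t||_{W^{-(j-1)}})||f||_{W^j}$.

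For the second term I would use the duality pairing between $W^{j-1}$ and $W^{-(j-1)}$, namely $\big|\big(L^K_{\bar S^K_t}f,Z^K_t\big)\big| \le ||L^K_{\bar S^K_t}f||_{W^{j-1}}\,||Z^K_t||_{W^{-(j-1)}}$. Here Proposition \ref{P:LKNormBnd}(ii), valid precisely for $2\le j\le4$, gives $||L^K_{\bar S^K_t}f||_{W^{j-1}}\le c\,||f||_{W^j}$, so this term is bounded by $c\,||f||_{W^j}\,||Z^K_t||_{W^{-(j-1)}}$.

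Adding the two bounds, dividing by $||f||_{W^j}$ and taking the supremum over the unit ball, the two contributions combine into $c_1\big(1+(1,\bar S_0)e^{c_2 t}\big)\big(1+||Z^K_t||_{W^{-(j-1)}}\big)$, as claimed. There is no genuine obstacle here, since the statement is essentially a bookkeeping corollary of Propositions \ref{P:ParaBound} and \ref{P:LKNormBnd}; the only points needing care are the index constraints. The upper bound $j\le4$ is what makes the embedding $W^{-(j-1)}\hookrightarrow W^{-4}$ available, so that the $W^{-4}$ estimate of Proposition \ref{P:ParaBound} can be downgraded to $W^{-(j-1)}$, while the lower bound $j\ge2$ is what makes Proposition \ref{P:LKNormBnd}(ii) applicable and the pairing in $W^{-(j-1)}$ with $j-1\ge1$ legitimate.
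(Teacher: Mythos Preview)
Your proposal is correct and follows essentially the same approach as the paper: split $\Lambda^K_t f$ into its two summands, control the first via Proposition~\ref{P:ParaBound} together with $(1,\bar S_t)\le(1,\bar S_0)e^{c_2 t}$ and the embedding $W^{-(j-1)}\hookrightarrow W^{-4}$, and the second via Proposition~\ref{P:LKNormBnd}(ii) and the $W^{j-1}/W^{-(j-1)}$ duality. Your explicit discussion of why the index constraints $2\le j\le 4$ are needed matches the paper's use of those bounds.
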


\begin{proof}
Since
\begin{align*}
|\Lambda^K_t f|
&\le \big(| \sqrt{K}(\widehat L^K_{\bar S^K_t}-\widehat L^\infty_{\bar S_t}) f|, \bar S_t\big)
+ \big|\big| L_{\bar S_t^K}^K f\big|\big|_{W^{j-1}} ||Z_t^K||_{W^{-(j-1)}},
\end{align*}
by Propositions \ref{P:ParaBound} and \ref{P:LKNormBnd}\eqref{E:LKBnd}, we have that
$$|\Lambda^K_t f|
\le c_1 \big(1+||Z^K_t||_{W^{-4}}\big) ||f||_{W^{j}} (1, \bar S_t)
+ c_2 ||f||_{W^{j-1}} ||Z_t^K||_{W^{-(j-1)}}.$$
Using embedding and the bound
\begin{equation} \label{E:1SInfty}
(1, \bar S_t) \le (1, \bar S_0) e^{ct}
\end{equation}
(due to Gronwall's inequality), we have
$$|\Lambda^K_t f| \le c_3 \big(1+||Z^K_t||_{W^{-(j-1)}}\big) ||f||_{W^{j}} (1, \bar S_0)e^{c_4t}
+ c_2 ||f||_{W^{j}} ||Z_t^K||_{W^{-(j-1)}}.$$
Rearranging and noting that $x\le 1+x$ for any $x$ complete the proof.
\end{proof}

Now, define the operator $\Gamma^K_t$ as
\begin{multline} \label{E:GammaKt}
\Gamma^K_t f = (\Pi^K_{\bar S^K_t}f, \bar S^K_t) \\
= \bigg( \sum_{i_1\in\K} \sum_{i_2\in\K} f(i_1,0)f(i_2,0) w^{i_1i_2,K}_{\bar S^K_t} + h^K_{\bar S^K_t} f^2
- 2 \sum_{i\in\K} f(i,0) h^K_{\bar S^K_t} \widehat m^{i,K}_{\bar S^K_t} f , \bar S^K_t \bigg),
\end{multline}
and let $(p^j_l)_{l\ge1}$ be a complete orthonormal basis of $W^j$.

\begin{proposition}
Let $j\in\N$. Suppose (C0) holds. Then, for $t\in\T$,
\begin{equation} \label{E:SumGamPj}
\Big|\sum_{l\ge1} \Gamma^K_t p^j_l\Big| \le c(1, \bar S^K_t)
\end{equation}
and for $u\le t$,
\begin{equation} \label{E:SumGamThetaPj}
\Big|\sum_{l\ge1} \Gamma^K_u \varTheta_{t-u} p^j_l\Big| \le c(1, \bar S^K_u).
\end{equation}
\end{proposition}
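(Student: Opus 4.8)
The plan is to exploit the quadratic structure of $\Gamma^K_t$ together with the reproducing-kernel structure of $W^j$. Writing out \eqref{E:GammaKt}, the map $f \mapsto \Gamma^K_t f$ splits into three quadratic pieces: a point-evaluation piece $\sum_{i_1,i_2\in\K} f(i_1,0)f(i_2,0)\,(w^{i_1i_2,K}_{\bar S^K_t},\bar S^K_t)$, a pure-square piece $(h^K_{\bar S^K_t}f^2,\bar S^K_t)$, and a cross piece $-2\sum_{i\in\K}f(i,0)\,(h^K_{\bar S^K_t}\widehat m^{i,K}_{\bar S^K_t}f,\bar S^K_t)$. By (C0) the coefficient functions $w^{i_1i_2,K}$, $h^K$ and $h^K\widehat m^{i,K}$ are uniformly bounded, so it suffices to sum each piece over the basis $(p^j_l)_{l\ge1}$ and bound the outcome by $c(1,\bar S^K_t)$. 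Absolute convergence of $\sum_l \Gamma^K_t p^j_l$ will fall out of the individual estimates: the point-evaluation piece is a finite $\K$-sum of convergent series, the square piece converges by monotone convergence, and the cross piece converges absolutely via Cauchy--Schwarz.

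The key tool is that, for $j\ge1$, the embedding $W^j\hookrightarrow C^0$ makes every evaluation functional $f\mapsto f(s)$ bounded on $W^j$ uniformly in $s$; letting $k_s\in W^j$ be its Riesz representer, one has $f(s)=\langle f,k_s\rangle_{W^j}$ and $\sup_s\|k_s\|_{W^j}\le C$. Parseval then yields the uniform kernel-diagonal bound $\sum_l p^j_l(s)^2=\|k_s\|_{W^j}^2\le C^2$, and $\sum_l p^j_l(i_1,0)p^j_l(i_2,0)=\langle k_{(i_1,0)},k_{(i_2,0)}\rangle_{W^j}$ is finite. Hence the point-evaluation piece, being a finite sum over $i_1,i_2\in\K$, is bounded by $c(1,\bar S^K_t)$. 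For the pure-square piece the summands are nonnegative (both $h^K$ and $\bar S^K_t$ are nonnegative), so monotone convergence gives $\sum_l (h^K_{\bar S^K_t}(p^j_l)^2,\bar S^K_t)=\int h^K_{\bar S^K_t}\sum_l p^j_l(s)^2\,\bar S^K_t(ds)\le C^2\,(h^K_{\bar S^K_t},\bar S^K_t)\le c(1,\bar S^K_t)$. For the cross piece I apply Cauchy--Schwarz in $l$: the first factor $(\sum_l p^j_l(i,0)^2)^{1/2}$ is controlled by the kernel diagonal, while the second factor $\big(\sum_l (h^K_{\bar S^K_t}\widehat m^{i,K}_{\bar S^K_t}p^j_l,\bar S^K_t)^2\big)^{1/2}$ equals, by Parseval, the $W^j$-norm of the Riesz representer of the functional $f\mapsto(h^K_{\bar S^K_t}\widehat m^{i,K}_{\bar S^K_t}f,\bar S^K_t)$. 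Since that functional has dual norm at most $c(1,\bar S^K_t)$ (again through $W^j\hookrightarrow C^0$), the cross piece is also $\le c(1,\bar S^K_t)$, which establishes \eqref{E:SumGamPj}.

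For the shifted estimate \eqref{E:SumGamThetaPj} I would rerun the same three bounds with $\varTheta_{t-u}p^j_l$ in place of $p^j_l$. The shift acts by re-evaluation, $\varTheta_{t-u}p^j_l(i,0)=p^j_l(i,t-u)$ and $\varTheta_{t-u}p^j_l(i,v)=p^j_l(i,v+(t-u))$, with the boundary reflection of Remark~\ref{R:ShiftOp}; since $u\le t\le T\le\omega$, all shifted arguments remain in $\A$, so the kernel-diagonal and point-evaluation bounds carry over verbatim with $s$ replaced by a shifted state and remain uniform. The only modification is in the cross piece, where bounding the Riesz representer picks up the harmless operator factor from \eqref{E:ThetaPhiNorm}. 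I expect the cross piece to be the main obstacle: one must legitimately pass the $l$-summation through the measure integral and recognise the resulting series as a squared Hilbert-space norm, and it is precisely the uniform kernel-diagonal bound and the $W^j\hookrightarrow C^0$ embedding that make this step valid; the remainder reduces to the boundedness of the parameters from (C0) and the finiteness of $\K$.
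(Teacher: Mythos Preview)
Your proof is correct and follows essentially the same route as the paper: both hinge on the reproducing-kernel bound $\sup_s \sum_l p^j_l(s)^2 \le c$, obtained via Riesz representation of the evaluation functionals on $W^j$, together with (C0) and the support constraint $\bar S^K_u(\K\times(u+a^*,\omega])=0$ for the shifted version. The paper packages this as the single two-point estimate $\sup_{s_1,s_2}\bigl|\sum_l p^j_l(s_1)p^j_l(s_2)\bigr|\le c$ and applies it uniformly to all three pieces of $\Gamma^K_t$, whereas you handle the cross piece via Cauchy--Schwarz in $l$ and a second Riesz representer; this is a cosmetic variation, not a substantive one.
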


\begin{proof}
First, note that
\begin{equation} \label{E:CONB}
\sup_{s_1\in\S}\sup_{s_2\in\S} \Big|\sum_{l\ge1} p^j_l(s_1)p^j_l(s_2)\Big| \le c.
\end{equation}
This can be seen by considering the operator $\mathcal{H}_s:f \mapsto f(s)$ on $W^j$.
The Riesz Representation Theorem and Parseval's identity yield
$||\mathcal{H}_s||^2_{W^{-j}} = \sum_{l\ge1} (p^j_l(s))^2$ on one hand,
and
$|\mathcal{H}_sf| = |f(s)| \le ||f||_{C^0} \le c_1||f||_{W^j}$ gives
$||\mathcal{H}_s||_{W^{-j}} \le c_1$ on the other hand.
Thus, $\sum_{l\ge1} (p^j_l(s))^2 \le c_2$ for all $s\in\S$.
Now, for any $s_1\in\S$ and $s_2\in\S$,
$$|\sum_{l\ge1} p^j_l(s_1)p^j_l(s_2)|^2
\le c_3||\sum_{l\ge1} p^j_l(s_1)p^j_l ||^2_{W^j}
= c_3 \sum_{l\ge1} (p^j_l(s_1))^2 \le c_4.$$
Thus, \eqref{E:CONB} follows.
By this and (C0), \eqref{E:SumGamPj} follows immediately from \eqref{E:GammaKt}.

For \eqref{E:SumGamThetaPj}, observe that by (C0),
\begin{multline*}
\Big|\sum_{l\ge1} \Gamma^K_u \varTheta_{t-u} p^j_l \Big|
\le c_5 \bigg( \sum_{i_1\in\K} \sum_{i_2\in\K} \Big|\sum_{l\ge1} \varTheta_{t-u}p^j_l(i_1,0) \varTheta_{t-u}p^j_l(i_2,0) \Big| \\
+ \Big|\sum_{l\ge1} (\varTheta_{t-u} p^j_l)^2 \Big|
+ \sum_{i\in\K} \Big|\sum_{l\ge1} \varTheta_{t-u}p^j_l(i,0) \varTheta_{t-u}p^j_l \Big| ,\bar S^K_u \bigg),
\end{multline*}
and
\begin{align*}
&\q\q\sup_{s_1\in\K\times[0,u+a^*]} \sup_{s_2\in\K\times[0,u+a^*]} \Big| \sum_{l\ge1} \varTheta_{t-u}p^j_l(s_1) \varTheta_{t-u}p^j_l(s_2) \Big| \\
&= \sup_{i_1\in\K,w_1\in[0,u+a^*]} \sup_{i_2\in\K,w_2\in[0,u+a^*]} \Big| \sum_{l\ge1} p^j_l(i_1,w_1+t-u) p^j_l(i_2,w_2+t-u) \Big| \\
&\le \sup_{s_1\in\K\times[t-u,t+a^*]} \sup_{s_2\in\K\times[t-u,t+a^*]} \Big| \sum_{l\ge1} p^j_l(s_1) p^j_l(s_2) \Big|,
\end{align*}
which is finite by \eqref{E:CONB}.
Hence, \eqref{E:SumGamThetaPj} follows.
\end{proof}

\subsubsection{Bounds on the norm of $Z^K$}

\begin{proposition} \label{P:ZW-2Bound}
$$\sup_{t\le T} \sup_{K\ge1} \E \big[ ||Z^K_t||_{W^{-2}} \big] <\infty.$$
\end{proposition}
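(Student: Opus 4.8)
The plan is to control $\E[\|Z^K_t\|_{W^{-2}}]$ by expanding this norm through a complete orthonormal basis $(p^2_l)_{l\ge1}$ of $W^2$ and applying the alternative representation of Proposition \ref{P:ZAltRep} coordinate-by-coordinate. By Parseval's identity, $\|Z^K_t\|_{W^{-2}}^2 = \sum_{l\ge1} (p^2_l, Z^K_t)^2$, so I would first bound $\E\big[\sum_l (p^2_l,Z^K_t)^2\big]$ and then pass to $\E[\|Z^K_t\|_{W^{-2}}]$ via Jensen's inequality. Substituting $\phi = p^2_l$ into \eqref{E:phiZK} and squaring, each term $(p^2_l,Z^K_t)^2$ splits into four contributions: the initial datum $(\varTheta_t p^2_l, Z^K_0)^2$, the rescaled drift-difference term involving $\sqrt{K}(\widehat L^K_{\bar S^K_u}-\widehat L^\infty_{\bar S_u})\varTheta_{t-u}p^2_l$, the linear term $(\widehat L^K_{\bar S^K_u}\varTheta_{t-u}p^2_l, Z^K_u)$, and the martingale increment $\int_0^t(\varTheta_{t-u}p^2_l, d\tilde M^K_u)$.

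The key steps, after summing over $l$, are as follows. For the initial term, $\sum_l (\varTheta_t p^2_l, Z^K_0)^2 = \|Z^K_0\|_{W^{-2}}^2$ up to the bounded shift-operator norm constant from \eqref{E:ThetaPhiNorm}, which is bounded uniformly in $K$ by (A4). For the drift-difference and linear terms, I would use Cauchy--Schwarz to pull the sum over $l$ inside the time integral, then apply Proposition \ref{P:ParaBound} together with the orthonormal-basis bound \eqref{E:SumGamThetaPj}-style estimates to dominate $\sum_l \big(\sqrt K(\widehat L^K_{\bar S^K_u}-\widehat L^\infty_{\bar S_u})\varTheta_{t-u}p^2_l, \bar S_u\big)^2$ by something of the form $c(1+\|Z^K_u\|_{W^{-4}}^2)(1,\bar S_u)^2$, and similarly the linear term by $c\,\|Z^K_u\|_{W^{-2}}^2$ via Proposition \ref{P:LKNormBnd}. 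For the martingale term, the Burkholder--Davis--Gundy inequality (or simply taking expectations, since these are true martingales) reduces $\E\big[\sum_l(\int_0^t \varTheta_{t-u}p^2_l\, d\tilde M^K_u)^2\big]$ to $\E\big[\int_0^t \tfrac1K \sum_l \Gamma^K_u \varTheta_{t-u}p^2_l\, du\big]$, which by \eqref{E:SumGamThetaPj} is bounded by $\tfrac{c}{K}\int_0^t \E[(1,S^K_u)]\,du = c\int_0^t \E[(1,\bar S^K_u)]\,du$, and this is finite uniformly in $K$ by \eqref{E:1SK}. Collecting these, I obtain an integral inequality of the form
$$\E\big[\|Z^K_t\|_{W^{-2}}^2\big] \le c_1 + c_2 \int_0^t \E\big[\|Z^K_u\|_{W^{-2}}^2\big]\,du + c_3\int_0^t\E\big[\|Z^K_u\|_{W^{-4}}^2\big]\,du,$$
and the conclusion would follow by Gronwall's inequality, after which Jensen gives the stated bound on $\E[\|Z^K_t\|_{W^{-2}}]$.

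The main obstacle I anticipate is the coupling between the $W^{-2}$ and $W^{-4}$ norms introduced by the drift-difference term: Proposition \ref{P:ParaBound} naturally produces a factor $\|Z^K_u\|_{W^{-4}}$, not $\|Z^K_u\|_{W^{-2}}$, so a naive Gronwall argument in $W^{-2}$ alone does not close. The cleanest resolution is to use the embedding $W^{-2}\hookrightarrow W^{-4}$ to dominate $\|Z^K_u\|_{W^{-4}}\le c\|Z^K_u\|_{W^{-2}}$, folding the $W^{-4}$ integral back into the $W^{-2}$ one so that Gronwall applies to a single quantity; the uniformity in $K$ of this bound then rests on the $\sqrt K$-convergence rate in (A2) (so that the $\sqrt K(\widehat L^K-\widehat L^\infty)$ term contributes a genuinely bounded, $K$-independent constant rather than a blowing-up one) and on the uniform mass bound \eqref{E:sup1SK}. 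A secondary technical point is justifying the interchange of $\sum_l$ with expectation and with the stochastic integral, which is handled by the uniform summability estimates \eqref{E:CONB} and \eqref{E:SumGamThetaPj} established just above, together with monotone/dominated convergence.
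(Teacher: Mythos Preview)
Your approach is correct and rests on the same ingredients as the paper---the alternative representation \eqref{E:phiZK}, Propositions~\ref{P:ParaBound} and~\ref{P:LKNormBnd}, the summability bound \eqref{E:SumGamThetaPj} for the martingale term, the embedding $W^{-2}\hookrightarrow W^{-4}$, and Gronwall---so the logical skeleton matches. The execution differs: you expand $\|Z^K_t\|_{W^{-2}}^2$ in a basis, square each coordinate, sum over $l$, and run Gronwall on the \emph{second} moment before passing back via Jensen. The paper instead bounds $|(\phi,Z^K_t)|$ directly for an arbitrary $\phi\in W^2$ by an expression of the form $c\,\|\phi\|_{W^2}\{\cdots\}$, then takes the supremum over $\|\phi\|_{W^2}\le 1$ to obtain a pathwise inequality for $\|Z^K_t\|_{W^{-2}}$ itself, and only then takes expectation and applies Gronwall on the \emph{first} moment. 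The paper's route avoids the squaring step, the Cauchy--Schwarz in $t$, and the basis expansion for the non-martingale terms (the basis is only needed, in both arguments, to control the martingale via Parseval). Your variant works but is a detour; one minor slip is the stray $1/K$ in your martingale estimate---since $\tilde M^K=K^{-1/2}M^K$ and $\Gamma^K_u f=(\Pi^K_{\bar S^K_u}f,\bar S^K_u)$ already carries the normalisation, the predictable quadratic variation is $\int_0^t\Gamma^K_u\varTheta_{t-u}p^2_l\,du$ without an extra $1/K$, though your final bound $c\int_0^t\E[(1,\bar S^K_u)]\,du$ is correct.
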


\begin{proof}
For this, we use the alternative representation of $Z^K$ as given in Proposition \ref{P:ZAltRep}.
Note that, for $\phi\in W^2$,
\begin{multline*}
(\phi, Z^K_t) = (\varTheta_t\phi, Z^K_0)
+ \sqrt{K} \int_0^t \big( (\widehat L^K_{\bar S^K_u}-\widehat L^\infty_{\bar S_u}) \varTheta_{t-u}\phi, \bar S_u\big) du \\
+ \int_0^t \big(\widehat L^K_{\bar S^K_u} \varTheta_{t-u}\phi, Z^K_u\big) du
+ \int_0^t (\varTheta_{t-u}\phi, d\tilde M^K_u).
\end{multline*}
Thus, by Propositions \ref{P:ParaBound} and \ref{P:LKNormBnd}, with embedding,
\begin{align*}
|(\phi, Z^K_t)| &\le ||\varTheta_t\phi||_{W^2} ||Z^K_0||_{W^{-2}}
+ c_1 \int_0^t (1+||Z^K_u||_{W^{-2}}) ||\varTheta_{t-u}\phi||_{W^2} (1,\bar S_u) du \\
&\q\q+ c_2 \int_0^t ||\varTheta_{t-u}\phi||_{W^2} ||Z^K_u||_{W^{-2}} du
+ \Big|\int_0^t (\varTheta_{t-u}\phi, d\tilde M^K_u)\Big| \\
&\le c_3 ||\phi||_{W^2} \bigg\{ ||Z^K_0||_{W^{-2}}
+ (1,\bar S_0)e^{c_4t}t \\
&\q\q+ \big(1+(1,\bar S_0)e^{c_4t}\big) \int_0^t ||Z^K_u||_{W^{-2}} du
+ \Big|\Big| \int_0^t \varTheta^*_{t-u}d\tilde M^K_u \Big|\Big|_{W^{-2}}\bigg\},
\end{align*}
where the last inequality above is due to \eqref{E:1SInfty}, \eqref{E:ThetaPhiNorm}
and that we write $\int_0^t \varTheta^*_{t-u}d\tilde M^K_u$ for an operator such that
$\big(f,\int_0^t \varTheta^*_{t-u}d\tilde M^K_u\big) = \int_0^t (\varTheta_{t-u}f, d\tilde M^K_u)$, resulting in an expression for $||Z^K_t||_{W^{-2}}$.

Further,
$\E\big[ || \int_0^t \varTheta^*_{t-u}d\tilde M^K_u ||_{W^{-2}} \big]
\le \E\big[ || \int_0^t \varTheta^*_{t-u}d\tilde M^K_u ||^2_{W^{-2}} \big]^{1/2}$.
The Riesz Representation Theorem and Parseval's identity yield, for $r\le t$,
\begin{multline*}
\E\bigg[ \Big|\Big| \int_0^r \varTheta^*_{t-u}d\tilde M^K_u \Big|\Big|^2_{W^{-2}} \bigg]
= \E\bigg[ \sum_{l\ge1} \Big( \int_0^r (\varTheta_{t-u}p^2_l, d\tilde M^K_u) \Big)^2 \bigg] \\
= \sum_{l\ge1} \E\bigg[ \Big< \int_0^\cdot (\varTheta_{t-u}p^2_l, d\tilde M^K_u) \Big>_r \bigg]
= \sum_{l\ge1} \E\bigg[ \int_0^r \Gamma^K_u \varTheta_{t-u}p^2_l du \bigg].
\end{multline*}
It then follows from \eqref{E:SumGamThetaPj} and \eqref{E:1SK} that
this is bounded by $c_5 (1, \bar S_0^K) e^{c_6r}r$.
Taking $r=t$, we have $\E\big[|| \int_0^t \varTheta^*_{t-u}d\tilde M^K_u ||^2_{W^{-2}} \big]
\le c_5 (1, \bar S_0^K) e^{c_6t}t$.
Therefore,
\begin{multline*}
\E\big[||Z^K_t||_{W^{-2}}\big]
\le c_7 \bigg\{ ||Z^K_0||_{W^{-2}}
+ (1,\bar S_0)e^{c_4T}T^2 \\
+ \big(1+(1,\bar S_0)e^{c_4T}\big) \int_0^t \E\big[||Z^K_u||_{W^{-2}}\big] du
+ (1, \bar S_0^K)^{1/2} e^{c_8T}T^{1/2} \bigg\}
\end{multline*}
and the proof is complete by Gronwall's inequality, with (C3) and (A4).
\end{proof}

\begin{proposition} \label{P:ZW-3Bound}
$$\sup_{K\ge1} \mathbb{E} \Big[ \sup_{t\le T} ||Z^K_t||_{W^{-3}} \Big] < \infty.$$
\end{proposition}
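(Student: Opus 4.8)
The plan is to start from the direct (unshifted) semimartingale representation \eqref{E:ZKL}, which I would first rewrite in the compact form
$$(f,Z^K_t)=(f,Z^K_0)+\int_0^t \Lambda^K_u f\,du+\tilde M^{f,K}_t,\qquad f\in W^3,$$
exploiting that the derivative term cancels in $\sqrt K(L^K_{\bar S^K_u}-L^\infty_{\bar S_u})f=\sqrt K(\widehat L^K_{\bar S^K_u}-\widehat L^\infty_{\bar S_u})f$, so that the two drift integrals combine into $\int_0^t\Lambda^K_u f\,du$ with $\Lambda^K_u$ the operator of Corollary \ref{C:Lambdaf}. Writing $\tilde M^K$ for the $W^{-3}$-valued martingale with $(f,\tilde M^K_t)=\tilde M^{f,K}_t$ (it takes values in the measure space $C^{-0}\hookrightarrow W^{-3}$), I would expand over a complete orthonormal basis $(p^3_l)$ of $W^3$ and split, via the triangle inequality for the $W^{-3}$-valued time integral,
$$\sup_{t\le T}||Z^K_t||_{W^{-3}}\le ||Z^K_0||_{W^{-3}}+\int_0^T||\Lambda^K_u||_{W^{-3}}\,du+\sup_{t\le T}||\tilde M^K_t||_{W^{-3}}.$$
The essential structural choice is that, unlike in the proof of Proposition \ref{P:ZW-2Bound}, I keep the genuine martingale $\tilde M^K$ rather than the shifted integral $\int_0^t\varTheta^*_{t-u}d\tilde M^K_u$, since here the supremum over $t$ sits \emph{inside} the expectation and only a true martingale admits Doob's maximal inequality.

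For the first two terms I would take expectations and invoke the estimates already in hand. The initial term is controlled by the embedding $W^{-2}\hookrightarrow W^{-3}$ together with (A4), giving $||Z^K_0||_{W^{-3}}\le c||Z^K_0||_{W^{-2}}\le c'$ uniformly in $K$. For the drift, Corollary \ref{C:Lambdaf} with $j=3$ yields $||\Lambda^K_u||_{W^{-3}}\le c(1+||Z^K_u||_{W^{-2}})$ on $[0,T]$, whence $\E\big[\int_0^T||\Lambda^K_u||_{W^{-3}}\,du\big]\le cT\big(1+\sup_{u\le T,\,K}\E[||Z^K_u||_{W^{-2}}]\big)$, which is finite and uniform in $K$ by Proposition \ref{P:ZW-2Bound}. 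The point worth stressing is that because I bound the $W^{-3}$ norm itself, and not its square, only \emph{first} moments of $||Z^K_u||_{W^{-2}}$ are needed; consequently no Gronwall argument is required here, the $W^{-2}$ estimate of Proposition \ref{P:ZW-2Bound} doing all the work.

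The martingale term is the only genuinely delicate point, and I would handle it by Doob's $L^2$ maximal inequality in the Hilbert space $W^{-3}$: $\E\big[\sup_{t\le T}||\tilde M^K_t||_{W^{-3}}^2\big]\le 4\,\E\big[||\tilde M^K_T||_{W^{-3}}^2\big]$, and then by Cauchy--Schwarz $\E\big[\sup_{t\le T}||\tilde M^K_t||_{W^{-3}}\big]\le 2\,\E\big[||\tilde M^K_T||_{W^{-3}}^2\big]^{1/2}$. Using Parseval and the predictable quadratic variation, $\E\big[||\tilde M^K_T||_{W^{-3}}^2\big]=\E\big[\int_0^T\sum_{l\ge1}\Gamma^K_u p^3_l\,du\big]$, which by the trace estimate \eqref{E:SumGamPj} is at most $c\,\E\big[\int_0^T(1,\bar S^K_u)\,du\big]\le c'$ uniformly in $K$ by \eqref{E:1SK} and (C3). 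Combining the three contributions gives the claim. The main thing to get right --- and the reason the statement is phrased in $W^{-3}$ rather than $W^{-2}$ --- is the bookkeeping of regularity: the derivative in $L^K_S$ costs one order, so Corollary \ref{C:Lambdaf} controls $||\Lambda^K||_{W^{-3}}$ only through $||Z^K||_{W^{-2}}$; thus $W^{-3}$ is precisely the weakest space in which the drift can be fed by the available $W^{-2}$ bound while the martingale trace $\sum_{l}\Gamma^K_u p^3_l$ still converges.
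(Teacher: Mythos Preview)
Your proof is correct and follows essentially the same route as the paper: start from the unshifted representation \eqref{E:ZKL}, bound the drift via the $W^{-2}$ estimate of Proposition \ref{P:ZW-2Bound} (you package this through Corollary \ref{C:Lambdaf}, the paper invokes Propositions \ref{P:ParaBound} and \ref{P:LKNormBnd} directly, which is the same content), and control the martingale by Doob's inequality together with the trace bound \eqref{E:SumGamPj}. The only cosmetic difference is that you appeal to Doob's maximal inequality for the Hilbert-space-valued martingale $\tilde M^K$ directly, whereas the paper first expands $\|\tilde M^K_t\|_{W^{-3}}^2=\sum_l(\tilde M^{p^3_l,K}_t)^2$, pulls the supremum inside the sum, and applies the scalar Doob inequality component-wise; both lead to the identical bound $4\sum_l\E\big[\langle\tilde M^{p^3_l,K}\rangle_T\big]$.
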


\begin{proof}
This relies on the bound in Proposition \ref{P:ZW-2Bound}.
 From \eqref{E:ZKL}, we have for $f\in W^3$,
\begin{align*}
|(f,Z^K_t)| &\le ||f||_{W^3} ||Z^K_0||_{W^{-3}} + \sqrt{K} \int_0^t \big( |L^K_{\bar S^K_u}f - L^\infty_{\bar S_u}f |, \bar{S}_u \big) du \\
&\q\q\q+ \int_0^t ||L^K_{\bar S^K_u}||_{\mathcal{L}^{3,2}} ||f||_{W^3} ||Z^K_u||_{W^{-2}} du + |\tilde{M}_t^{f,K}| \\
&\le ||f||_{W^3} ||Z^K_0||_{W^{-3}} + c_1 ||f||_{W^3} (1, \bar{S}_0)e^{c_2t} \int_0^t (1+||Z^K_u||_{W^{-2}}) du \\
&\q\q\q+ c_3 ||f||_{W^3} \int_0^t ||Z^K_u||_{W^{-2}} du + ||f||_{W^3} ||\tilde{M}_t^K||_{W^{-3}}
\end{align*}
by Propositions \ref{P:ParaBound} and \ref{P:LKNormBnd}, \eqref{E:1SInfty} and embedding.
This gives
\begin{multline*}
\sup_{t\le T} ||Z^K_t||_{W^{-3}} \le ||Z^K_0||_{W^{-3}} \\
+ c_4 \big(1+ (1, \bar{S}_0) e^{c_2T}\big) \int_0^T (1+||Z^K_u||_{W^{-2}}) du
+ \sup_{t\le T} ||\tilde{M}_t^K||_{W^{-3}}.
\end{multline*}
By the Riesz Representation Theorem and Parseval's identity again, now along with Doob's inequality, we have
\begin{equation} \label{E:T1forM}
\begin{split}
\E\Big[ \sup_{t\le T} ||\tilde{M}_t^K||^2_{W^{-3}} \Big]
&= \E\bigg[ \sup_{t\le T} \sum_{l\ge1} \big(\tilde{M}_t^{p^3_l,K}\big)^2 \bigg]
\le \sum_{l\ge1} \E\Big[ \sup_{t\le T} \big(\tilde{M}_t^{p^3_l,K}\big)^2 \Big] \\
&\le 4 \sum_{l\ge1} \E\Big[ \big<\tilde{M}^{p^3_l,K}\big>_T \Big]
= 4 \sum_{l\ge1} \E\bigg[ \int_0^T \Gamma^K_u p^3_l du \bigg] \\
&\le c_5 (1,\bar S^K_0) e^{c_6T}T,
\end{split}
\end{equation}
where the last inequality follows from \eqref{E:SumGamPj} and \eqref{E:1SK}.
Therefore,
\begin{multline*}
\E\Big[ \sup_{t\le T} ||Z^K_t||_{W^{-3}} \Big]
\le c_7 \bigg\{ ||Z^K_0||_{W^{-3}} \\
+ \big(1+ (1, \bar{S}_0)e^{c_2T}\big) \int_0^T \big(1+\E\big[||Z^K_u||_{W^{-2}}\big]\big) du
+ (1,\bar S^K_0)^{1/2} e^{c_8T}T^{1/2} \bigg\}
\end{multline*}
and (C3), (A4), and Proposition \ref{P:ZW-2Bound} complete the proof.
\end{proof}

\subsubsection{Tightness of $Z^K$} \label{S:AppTightZ}

$Z^K$ is tight in $\D(\T,W^{-4})$ if the following conditions are satisfied: 
\begin{itemize}
\item [(T1)]
For every $t\in\T$, $(Z^K_t)_{K\ge1}$ is tight in $W^{-4}$, that is, 
for every $\epsilon>0$, there exists a compact set $\mathcal C_\epsilon$ such that 
$\P(Z^K_t \in \mathcal C_\epsilon) > 1-\epsilon$ for all $K\ge1$, equivalently, 
$\P(Z^K_t \notin \mathcal C_\epsilon) \le \epsilon$ for all $K\ge1$. 
\item [(T2)]
Suppose $Z^K_t = \tilde V^K_t + \tilde M^K_t$. 
For each $\epsilon_1, \epsilon_2 >0$, there exists $\delta>0$ and $K_0\ge1$ such that for every sequence of stopping times $\tau^K \le T$,
\begin{itemize}
\item [(T2a)]
$\sup_{K>K_0} \sup_{\zeta<\delta} \P( ||\tilde V^K_{\tau^K+\zeta\wedge T} - \tilde V^K_{\tau^K}||_{W^{-4}} > \epsilon_1) < \epsilon_2$,
\item [(T2b)]
$\sup_{K>K_0} \sup_{\zeta<\delta} \P\big( \big| \big<\big<\tilde M^K\big>\big>_{\tau^K+\zeta\wedge T} - \big<\big<\tilde M^K\big>\big>_{\tau^K}\big| > \epsilon_1\big) < \epsilon_2$, 
\end{itemize}
where $\big<\big<\tilde M^K\big>\big>$ is defined such that 
$\big( ||\tilde M^K_t||_{W^{-4}}^2 - \big<\big<\tilde M^K\big>\big>_t \big)_{t\ge0}$ is a martingale.
\end{itemize}

\begin{proposition} \label{P:TightZM}
Both the sequences $Z^K$ and $\tilde M^K$ are tight in $\D(\T,W^{-4})$.
\end{proposition}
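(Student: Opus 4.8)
The plan is to verify, for each of the two sequences, the two tightness criteria (T1) and (T2) stated just above. The decisive structural input is the Hilbert--Schmidt embedding $W^{-3}\underset{H.S.}{\hookrightarrow}W^{-4}$: a norm-bounded ball of $W^{-3}$ is precompact in $W^{-4}$. This is exactly what converts the moment bounds already secured in the $W^{-3}$-norm into compactness statements in $W^{-4}$, and it is the reason the limit is sought one regularity level down. I would treat $Z^K$ in full and then observe that $\tilde M^K$ is covered by the same estimates, with its finite-variation part trivial.

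For (T1) applied to $Z^K$, fix $t\in\T$ and let $\mathcal C_R$ denote the $W^{-4}$-closure of the closed ball of radius $R$ in $W^{-3}$; by the Hilbert--Schmidt embedding $\mathcal C_R$ is compact in $W^{-4}$. Markov's inequality gives
$$\P\big(Z^K_t\notin\mathcal C_R\big)\le\P\big(\|Z^K_t\|_{W^{-3}}>R\big)\le\frac1R\,\E\Big[\sup_{t\le T}\|Z^K_t\|_{W^{-3}}\Big],$$
and the right-hand side is bounded uniformly in $K$ by Proposition \ref{P:ZW-3Bound}; choosing $R$ large yields (T1). For $\tilde M^K$ the identical argument applies with \eqref{E:T1forM}, which bounds $\E[\sup_{t\le T}\|\tilde M^K_t\|^2_{W^{-3}}]$ uniformly in $K$ by (C3).

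For (T2), write the semimartingale decomposition as $(f,Z^K_t)=(f,\tilde V^K_t)+\tilde M^{f,K}_t$ with $(f,\tilde V^K_t)=(f,Z^K_0)+\int_0^t\Lambda^K_u f\,du$, where $\Lambda^K_u$ is the operator of Corollary \ref{C:Lambdaf} (the derivative terms $f'$ in $L^K$ and $L^\infty$ cancel in the $\sqrt K$-difference). For (T2a) I would estimate the increment in operator norm,
$$\big\|\tilde V^K_{\tau^K+\zeta\wedge T}-\tilde V^K_{\tau^K}\big\|_{W^{-4}}\le\int_{\tau^K}^{\tau^K+\zeta\wedge T}\|\Lambda^K_u\|_{W^{-4}}\,du\le c\,\zeta\Big(1+\sup_{t\le T}\|Z^K_t\|_{W^{-3}}\Big),$$
using Corollary \ref{C:Lambdaf} with $j=4$; taking expectations, invoking Proposition \ref{P:ZW-3Bound}, and applying Markov's inequality gives (T2a) for $\delta$ small. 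For (T2b), Parseval's identity applied to a complete orthonormal basis $(p^4_l)$ of $W^4$ yields $\langle\langle\tilde M^K\rangle\rangle_t=\sum_{l\ge1}\int_0^t\Gamma^K_u p^4_l\,du$, whence the increment is bounded by $c\int_{\tau^K}^{\tau^K+\zeta\wedge T}(1,\bar S^K_u)\,du\le c\,\zeta\sup_{t\le T}(1,\bar S^K_t)$ via \eqref{E:SumGamPj}; \eqref{E:sup1SK} and Markov then give (T2b). As $\tilde M^K$ is a martingale, its finite-variation part vanishes so (T2a) is immediate for it, while (T2b) is the very same computation.

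The main obstacle here is conceptual rather than computational, and it is already dissolved by the choice of state space: a bounded set in the Hilbert space $W^{-3}$ is only weakly, not strongly, compact, so the fixed-time tightness (T1) cannot be read off from the $W^{-3}$-moment bounds within $W^{-3}$ itself. It is precisely the compactness of the embedding $W^{-3}\underset{H.S.}{\hookrightarrow}W^{-4}$ that upgrades boundedness in $W^{-3}$ to precompactness in $W^{-4}$. The only remaining care is in justifying the interchange of summation and integration in the $\langle\langle\tilde M^K\rangle\rangle$ computation and the uniform-in-$l$ control of $\sum_{l\ge1}\Gamma^K_u p^4_l$, both supplied by \eqref{E:SumGamPj} and the reproducing-kernel bound \eqref{E:CONB} underlying it.
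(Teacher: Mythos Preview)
Your proof is correct and follows essentially the same route as the paper: compact containment (T1) via the Hilbert--Schmidt embedding into $W^{-4}$, and the Aldous--Rebolledo increments (T2a)/(T2b) controlled by Corollary~\ref{C:Lambdaf} (together with Proposition~\ref{P:ZW-3Bound}) and by \eqref{E:SumGamPj} with \eqref{E:sup1SK}, respectively. The only cosmetic differences are that the paper uses the $W^{-2}$ moment bound (Proposition~\ref{P:ZW-2Bound}) for (T1) rather than the $W^{-3}$ one, and packages (T2a)/(T2b) via the sufficient conditions (T2a$'$)/(T2b$'$) from \cite{FanEtal19} rather than spelling out the increment estimates directly as you do.
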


\begin{proof}
(T1) follows from Proposition \ref{P:ZW-2Bound}.
Let $B_{W^{-2}}(R) = \{\mu\in W^{-2}: ||\mu||_{W^{-2}} \le R\}$. 
Since $W^{-2}$ is Hilbert-Schmidt embedded in $W^{-4}$, $B_{W^{-2}}(R)$ is compact in $W^{-4}$. 
From Proposition \ref{P:ZW-2Bound}, for any $\epsilon>0$, there exists $R$ such that 
$\P(Z^K_t \notin B_{W^{-2}}(R)) \le \epsilon$ for all $K\ge1$, 
since $\P(Z^K_t \notin B_{W^{-2}}(R)) = \P(||Z^K_t||_{W^{-2}} > R) \le \frac1R \E[ ||Z^K_t||_{W^{-2}} ]$. 

For (T2a) and (T2b), we check the stronger conditions established in \cite[Theorem 11]{FanEtal19}: 
There exists a $K_0 \ge1$ such that 
\begin{itemize}
\item [(T2a')]
$\sup_{K\ge K_0} \E \big[ \sup_{t\le T} ||\Lambda^K_t||_{W^{-j}} \big] \le c_T$,
\item [(T2b')]
$\sup_{K\ge K_0} \E \big[ \sup_{t\le T} |\sum_{l\ge1} \Gamma^K_t p^j_l | \big] \le c_T$,
\end{itemize}
where $(p^j_l)_{l\ge1}$ is a complete orthonormal basis of $W^{j}$.

From Corollary \ref{C:Lambdaf}, we obtain
$$\E\Big[ \sup_{t\le T} ||\Lambda^K_t||_{W^{-4}} \Big] \le c_1 \big(1+ (1, \bar S_0)e^{c_2T} \big) \bigg(1+ \E\Big[\sup_{t\le T} ||Z^K_t||_{W^{-3}}\Big] \bigg)$$
and (T2a') holds as a result of Proposition \ref{P:ZW-3Bound}.
Finally, (T2b') is a result of \eqref{E:SumGamPj} and \eqref{E:sup1SK}.

The tightness of $\tilde M^K$ follows from \eqref{E:T1forM} and (T2b').
\end{proof}

We can further show that $Z^K$ and $\tilde M^K$ are C-tight,
that is, the two sequences are tight and all limit points of the sequences are continuous.

\begin{proposition} 
Both the sequences $Z^K$ and $\tilde M^K$ are C-tight, all limit points are elements of $\mathbb{C}(\T,W^{-4})$.
\end{proposition}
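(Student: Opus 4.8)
The plan is to upgrade the tightness already proved in Proposition \ref{P:TightZM} to $C$-tightness by invoking the standard characterisation: a sequence that is tight in $\D(\T,W^{-4})$ is $C$-tight precisely when its maximal jump vanishes in probability, i.e. $\sup_{t\le T}\|Z^K_t-Z^K_{t-}\|_{W^{-4}}\to0$ in probability (and the same for $\tilde M^K$). Since tightness is in hand, the entire argument reduces to controlling a single jump uniformly in $t$ and showing it is $O(1/\sqrt K)$.

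First I would locate the jumps. In the decomposition $Z^K_t=\tilde V^K_t+\tilde M^K_t$ the finite-variation part $\tilde V^K$ is a time-integral over $[0,t]$ and hence continuous, and the limit $\bar S$ is deterministic and (by \eqref{E:fSInfty}) continuous; consequently every jump of $Z^K$ comes from $\tilde M^K=M/\sqrt K$, so $\Delta Z^K_t=\Delta\tilde M^K_t$ and it suffices to treat either one. The measure $M$ of Remark \ref{R:MeasureM} jumps only at bearing, splitting and death events: a bearing or splitting event producing $\nu_i$ type-$i$ offspring contributes $\sum_{i}\nu_i\,\delta_{(i,0)}$, while a death contributes $-\delta_{(\kappa_x,\lambda_x)}$. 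In every case the jump of $S^K$ is a signed combination of finitely many unit point masses whose total mass is at most $1+c\,\Xi$ (with $\Xi$ as in (A1) a uniform envelope for all offspring counts, the finitely many types contributing only a fixed factor), so the jump of $Z^K=\sqrt K(\bar S^K-\bar S)$ equals $1/\sqrt K$ times such a combination of point masses.

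Next I would bound this in the $W^{-4}$ norm. The evaluation functional $\mathcal H_s:f\mapsto f(s)$ satisfies $\|\mathcal H_s\|_{W^{-4}}\le c$ uniformly in $s$, which is exactly the bound extracted in \eqref{E:CONB} with $j=4$ (equivalently, the embedding $W^4\hookrightarrow C^0$), so each unit point mass has $W^{-4}$-norm at most $c$. Hence
$$\sup_{t\le T}\|\Delta Z^K_t\|_{W^{-4}}\le \frac{c\,(1+\Xi)}{\sqrt K}.$$
Because $\Xi$ is square integrable by (A1), it is finite almost surely, so the right-hand side tends to $0$ almost surely, and a fortiori in probability, as $K\to\infty$; the identical bound holds for $\tilde M^K$ since $\Delta\tilde M^K=\Delta Z^K$.

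Combining the vanishing of the maximal jumps with the tightness of Proposition \ref{P:TightZM} shows that both $Z^K$ and $\tilde M^K$ are $C$-tight, and every limit point, having no jumps, lies in $\mathbb C(\T,W^{-4})$. The only delicate point, and the main obstacle, is the uniform-in-$(s,K)$ control of a single jump: one must simultaneously verify that one reproduction or death event moves $S^K$ by boundedly many point masses, handled by the envelope $\Xi$ of (A1), and that a point mass has bounded $W^{-4}$-norm, handled by \eqref{E:CONB}; once these two uniform bounds are secured, the $1/\sqrt K$ scaling delivers the conclusion.
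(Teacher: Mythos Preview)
Your proposal is correct and follows essentially the same route as the paper: both combine the tightness from Proposition \ref{P:TightZM} with the jump criterion of \cite[Proposition VI.3.26]{JacShi03}, bound $\|\Delta Z^K_t\|_{W^{-4}}$ by $c(1+\Xi)/\sqrt K$ via the embedding $W^4\hookrightarrow C^0$ (your use of \eqref{E:CONB} is exactly this), and conclude for $\tilde M^K$ from $\Delta Z^K=\Delta\tilde M^K$. The only cosmetic difference is that the paper passes through Markov's inequality using $\E[\Xi]<\infty$, whereas you use the a.s.\ finiteness of $\Xi$ directly; either suffices under (A1).
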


\begin{proof}
As in \cite{FanEtal19}, for C-tightness of $Z^K$, we show that (see e.g. \cite[Proposition VI 3.26(iii)]{JacShi03}),
for all $u\in\mathbb{T}$ and $\epsilon>0$,
$\lim_{K\to\infty}\mathbb{P}\big(\sup_{t\le u} ||\Delta Z^K_t||_{W^{-4}} >\epsilon\big) =0$.
Note that $Z^K$ jumps when $S^K$ jumps, which occurs when there is a birth or a death. 
Thus, for $f\in W^{4}$,
\begin{align*} 
|(f, \Delta Z^K_t)| &= \frac{1}{\sqrt{K}} \big|(f,S^K_t-S^K_{t-})\big| \\
&\le \frac{1}{\sqrt{K}} \bigg( \sum_{i\in\K} \sup_{s\in\S} \big|\widecheck\xi^{i,K}_{\bar S^K_t}(s) f(i,0)\big| + \sum_{i\in\K} \sup_{s\in\S} \big|\widehat\xi^{i,K}_{\bar S^K_t}(s) f(i,0) - f(s)\big| \bigg) \\
&\le \frac{c}{\sqrt{K}}||f||_{W^{4}} (1+\Xi)
\end{align*}
by (A1),
giving $||\Delta Z^K_t||_{W^{-4}} \le \frac{c}{\sqrt{K}}(1+\Xi)$. Hence,
$$\mathbb{P}\Big(\sup_{t\le u} ||\Delta Z^K_t||_{W^{-4}} >\epsilon\Big)
\le \frac1\epsilon \mathbb{E}\Big[ \sup_{t\le u} ||\Delta Z^K_t||_{W^{-4}} \Big]
\le \frac1\epsilon \frac{c}{\sqrt{K}} \big(1+ \mathbb{E}[\Xi]\big),$$
which converges to zero as $K$ tends to infinity.

Observing that $Z^K$ and $\tilde M^K$ have the same discontinuities,
i.e. $\Delta Z^K_t = \Delta \tilde M^K_t$, 
$\tilde M^K$ also satisfies the conditions of being C-tight.
\end{proof}

\subsubsection{Convergence of $\tilde M^K$ and $Z^K$} \label{SS:LimitNUniq}

We now give the final steps in establishing the convergence of $Z^K$.

\begin{proposition} \label{P:MInfty}
The sequence $\tilde M^K$ convergeces weakly to $\tilde M^\infty$ such that for any $f\in W^4$,
$\tilde M^{f,\infty}_t \equiv (f,\tilde M^\infty_t)$, $t\in\T$, is a continuous Gaussian martingale
with predictable quadratic variation
\begin{multline} \label{E:MinftyPQV}
\big<\tilde M^{f,\infty}\big>_t = \int_0^t \bigg(
\sum_{i_1\in\K} \sum_{i_2\in\K} f(i_1,0)f(i_2,0) w^{i_1i_2,\infty}_{\bar S_u} + h^\infty_{\bar S_u} f^2 \\
- 2 \sum_{i\in\K} f(i,0) h^\infty_{\bar S_u} \widehat m^{i,\infty}_{\bar S_u} f , \bar S_u \bigg) du,
\end{multline}
\end{proposition}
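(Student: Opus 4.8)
The plan is to apply a functional martingale central limit theorem, using the tightness and $C$-tightness of $\tilde M^K$ already established together with the convergence of the predictable quadratic variations. Since both hold, every subsequence of $\tilde M^K$ has a further subsequence converging weakly in $\mathbb{D}(\T,W^{-4})$ to some limit with continuous paths. The first task is to show that for each fixed $f\in W^4$ the scalar process $(f,\tilde M^K)$ converges to a continuous Gaussian martingale with the bracket \eqref{E:MinftyPQV}; the $W^{-4}$-valued statement then follows because the projections $(f,\cdot)$, $f\in W^4$, determine the limit and the bracket is bilinear, so the cross-brackets $\langle \tilde M^{f,K},\tilde M^{g,K}\rangle$ (whose form is recorded after the evolution equation) pin down the joint Gaussian covariance structure.

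For the scalar convergence I would invoke the martingale CLT (as in \cite{FanEtal19}; see also \cite[Theorem VIII.3.11]{JacShi03}), whose hypotheses are a jump-negligibility (Lindeberg) condition and convergence of the predictable quadratic variations to a continuous deterministic limit. The jump condition is already at hand: the bound $||\Delta\tilde M^K_t||_{W^{-4}}=||\Delta Z^K_t||_{W^{-4}}\le \frac{c}{\sqrt K}(1+\Xi)$ obtained in the $C$-tightness proof, combined with (A1), gives $|\Delta\tilde M^{f,K}_t|\le \frac{c}{\sqrt K}||f||_{W^4}(1+\Xi)\to 0$ in $L^2$ uniformly in $t\le T$. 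For the bracket, recall from $\tilde M^K=\frac1{\sqrt K}M^K$ that
$$\big<\tilde M^{f,K}\big>_t = \int_0^t \Gamma^K_u f\, du = \int_0^t (\Pi^K_{\bar S^K_u}f, \bar S^K_u)\, du,$$
and I would show this converges to $\int_0^t(\Pi^\infty_{\bar S_u}f,\bar S_u)\,du$, where $\Pi^\infty_S$ is defined exactly as $\Pi^K_S$ but with the limiting parameters; this limit is precisely the right-hand side of \eqref{E:MinftyPQV}. The argument mirrors the identification of the Law of Large Numbers limit: by the LLN $\bar S^K\to\bar S$, while (C0), (C2) and (A0) make $w^{i_1i_2,K}$, $h^K$ and $\widehat m^{i,K}$ evaluated at $\bar S^K_u$ converge uniformly to their limits at $\bar S_u$, so $\Pi^K_{\bar S^K_u}f\to\Pi^\infty_{\bar S_u}f$ uniformly; Proposition \ref{P:PiLBnd} gives the domination $|\Pi^K_Sf|\le c||f||^2_{C^0}$ and \eqref{E:sup1SK} controls $(1,\bar S^K_u)$, so dominated convergence upgrades pointwise to integral convergence. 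Crucially, the limiting bracket is \emph{deterministic}, as $\bar S$ is deterministic.

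With both hypotheses verified, the martingale CLT identifies $(f,\tilde M^\infty)$ as a continuous martingale with predictable quadratic variation the deterministic function \eqref{E:MinftyPQV}. A continuous martingale with deterministic bracket is automatically Gaussian with independent increments (by the exponential-martingale/L\'evy characterisation argument), which yields both the martingale and the Gaussian assertions at once. Since the limiting law is uniquely determined by this deterministic bracket, all subsequential limits coincide and the whole sequence $\tilde M^K$ converges weakly to $\tilde M^\infty$.

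The step I expect to be the main obstacle is the convergence of the quadratic variations: one must pass to the limit inside $(\Pi^K_{\bar S^K_u}f,\bar S^K_u)$, where both the integrand (through the appearance of $\bar S^K_u$ in the reproduction parameters) and the integrating measure vary with $K$, and $\Pi^K$ is quadratic in $f$ and involves products of parameters. The resolution is to split $\Pi^K_{\bar S^K_u}-\Pi^\infty_{\bar S_u}$ into the parameter difference evaluated against $\bar S^K_u$ plus the fixed limiting integrand tested against $\bar S^K_u-\bar S_u$, exactly as in the proof that limit points satisfy the Law of Large Numbers equation, and then to use the uniform parameter convergence together with the uniform integrability of $\sup_{t\le T}(1,\bar S^K_t)$ from \eqref{E:sup1SK} to control each piece.
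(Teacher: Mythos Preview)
Your proposal is correct and follows essentially the same route as the paper, which simply refers to \cite{FanEtal19} for the scalar martingale CLT (verifying the jump-negligibility and bracket-convergence hypotheses exactly as you describe) and then invokes the already-established tightness of $\tilde M^K$ in $\D(\T,W^{-4})$ to upgrade to convergence of the measure-valued process. Your elaboration of the bracket convergence via the splitting argument and dominated convergence, and of the Gaussianity via the deterministic-bracket characterisation, fills in precisely the details the paper leaves implicit.
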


\begin{proof}
Same as in \cite{FanEtal19}, this can be achieved by showing that
$\tilde M^{f,K}$ converges to a continuous Gaussian martingale $\tilde M^{f,\infty}$
with predictable quadratic variation \eqref{E:MinftyPQV}.
In view of the tightness of $\tilde M^K$,  $\tilde M^K$ converges to $\tilde M^\infty$.
\end{proof}

\begin{proposition}
The limiting process $\mathscr{Z}$ of the sequence $Z^K$ satisfies \eqref{E:fZInfty} for any $f\in W^{4}$ and $t\in\T$.
\end{proposition}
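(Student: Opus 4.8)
The plan is to pass to the limit, term by term, in the semimartingale representation \eqref{E:ZKL}, relying on the tightness and C-tightness of $Z^K$ and on the convergence of the martingale part. Since $Z^K$ and $\tilde M^K$ are (C-)tight in $\D(\T,W^{-4})$, I fix a subsequence along which $(Z^K,\tilde M^K)$ converges weakly to a limit $(\mathscr{Z},\tilde M^\infty)$ with continuous paths, and invoke the Skorokhod representation theorem so as to realise this as almost sure convergence, uniform on $\T$ because the limits lie in $\mathbb{C}(\T,W^{-4})$. Recall in addition that $\bar S^K\to\bar S$ by Theorem \ref{T:LLN}. In \eqref{E:ZKL} the initial term converges by (A4), i.e. $(f,Z_0^K)\to(f,Z_0)$, and the martingale $\tilde M^{f,K}_t$ converges to the Gaussian martingale $\tilde M^{f,\infty}_t$ of Proposition \ref{P:MInfty}. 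Everything then reduces to identifying the limits of the two drift integrals.

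The order-$\sqrt K$ integral $\sqrt K\int_0^t ( L^K_{\bar S^K_u}f - L^\infty_{\bar S_u}f , \bar S_u ) du$ is the heart of the argument; here the derivative $f'$ cancels, leaving only the reproduction parameters $h$ and $n^i$. For $q\in\{h,n^i\}$ I write
\[
\sqrt K\big(q^K_{\bar S^K_u}-q^\infty_{\bar S_u}\big)
= \sqrt K\big(q^K_{\bar S^K_u}-q^\infty_{\bar S^K_u}\big)
+ \sqrt K\big(q^\infty_{\bar S^K_u}-q^\infty_{\bar S_u}\big).
\]
The first summand vanishes uniformly by the condition $\sqrt K\sup_\mu\|q^K_\mu-q^\infty_\mu\|_\infty\to0$ of (A2) (for $n^i=b\widecheck m^i+h\widehat m^i$ this follows from the product structure). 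For the second I apply the Fr\'echet differentiability (A3): $q^\infty_{\bar S^K_u}-q^\infty_{\bar S_u}=\partial_S q^\infty_{\bar S_u}(\bar S^K_u-\bar S_u)+r^K_u$ with $\|r^K_u\|_\infty=o(\|\bar S^K_u-\bar S_u\|_{W^{-4}})$. Linearity of $\partial_S q^\infty_{\bar S_u}$ and $Z^K_u=\sqrt K(\bar S^K_u-\bar S_u)$ turn the main part into $\partial_S q^\infty_{\bar S_u}(Z^K_u)$, which converges to $\partial_S q^\infty_{\bar S_u}(\mathscr{Z}_u)$, while the remainder is $\sqrt K\|r^K_u\|_\infty=\|Z^K_u\|_{W^{-4}}\,\varepsilon^K_u$ with $\varepsilon^K_u\to0$, so it vanishes because $\bar S^K_u\to\bar S_u$ (Theorem \ref{T:LLN}) and $\|Z^K_u\|_{W^{-4}}$ stays bounded (Proposition \ref{P:ZW-2Bound}). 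Pairing against $\bar S_u$ and invoking dominated convergence, with the uniform bound of Corollary \ref{C:Lambdaf} and $\sup_{u\le T}(1,\bar S_u)<\infty$ providing the dominating function, yields the first drift term of \eqref{E:fZInfty}.

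For the remaining integral $\int_0^t ( L^K_{\bar S^K_u}f , Z^K_u ) du$ I split $L^K_{\bar S^K_u}f=L^\infty_{\bar S_u}f+(L^K_{\bar S^K_u}f-L^\infty_{\bar S_u}f)$. The difference pairs against $Z^K_u$ and is bounded by $\|L^K_{\bar S^K_u}f-L^\infty_{\bar S_u}f\|_{W^{3}}\|Z^K_u\|_{W^{-3}}$; the first factor tends to $0$ by the convergence of the parameters (C1), (C2), (A2), while the second is bounded by Proposition \ref{P:ZW-3Bound}, so this piece is negligible. For the leading part, $L^\infty_{\bar S_u}f\in W^{3}$ whenever $f\in W^4$ (the derivative costing one order), and the $W^{-3}$ bound forces the limit $\mathscr{Z}_u$ to lie in $W^{-3}$, so that $(L^\infty_{\bar S_u}f,\mathscr{Z}_u)$ is well defined and $(L^\infty_{\bar S_u}f,Z^K_u)\to(L^\infty_{\bar S_u}f,\mathscr{Z}_u)$ after approximating the $W^3$ integrand by elements of $W^4$ and using the uniform $W^{-3}$ bound to control the tail. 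Dominated convergence, justified by Proposition \ref{P:LKNormBnd} and the bound on $(1,\bar S_u)$, then produces $\int_0^t ( f'-h^\infty_{\bar S_u}f+\sum_{i\in\K}f(i,0)n^{i,\infty}_{\bar S_u}, \mathscr{Z}_u ) du$.

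I expect the order-$\sqrt K$ integral to be the main obstacle: it requires a genuinely second-order expansion in which the leading $O(\sqrt K)$ contribution must cancel and the residual $\sqrt K\times o(\cdot)$ must be shown to vanish, all while passing the merely weak limit $Z^K\Rightarrow\mathscr{Z}$ through the continuous linear functional $\partial_S q^\infty_{\bar S_u}(\cdot)$. The difficulty is compounded by the bookkeeping across $W^{-2}$, $W^{-3}$ and $W^{-4}$, so that the moment bounds of Propositions \ref{P:ZW-2Bound} and \ref{P:ZW-3Bound} are indispensable both for dominating the integrands and for locating the limit in the smaller spaces. Assembling the three limits identifies $\mathscr{Z}$ as a solution of \eqref{E:fZInfty}, along the lines of the single-type treatment in \cite{FanEtal19}.
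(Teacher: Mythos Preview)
Your term-by-term limit in \eqref{E:ZKL} is a legitimate route and the analysis of the $\sqrt K$ integral via the Fr\'echet expansion is correct. However, the paper does \emph{not} pass to the limit in \eqref{E:ZKL} directly; it first replaces \eqref{E:ZKL} by the alternative representation \eqref{E:phiZK} of Proposition~\ref{P:ZAltRep}, in which the test function is shifted by $\varTheta_{t-u}$ and the operator $L^K$ is replaced by $\widehat L^K$ (no derivative). It then identifies the limit as a solution of \eqref{E:phiZInfty} and finally checks equivalence of \eqref{E:phiZInfty} with \eqref{E:fZInfty}. The gain is that $\widehat L^\infty_{\bar S_u}$ maps $W^4$ to $W^4$ (because $q^\infty\in C^4$ by (A2)), so the integrand paired against $Z^K_u$ stays in $W^4$ and no approximation of $f'\in W^3$ by $W^4$ functions is needed; your workaround for this point is sound but adds a layer the paper avoids.

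There is, however, a genuine omission in your argument: uniqueness. You show that every subsequential limit $\mathscr{Z}$ solves \eqref{E:fZInfty}, but you never argue that \eqref{E:fZInfty} has at most one solution with given $\mathscr{Z}_0$ and driving martingale, so you have not established that the whole sequence converges nor that ``the'' limiting process is well defined. A direct Gronwall argument on \eqref{E:fZInfty} fails precisely because $L^\infty_{\bar S_u}$ contains $f\mapsto f'$ and hence loses one Sobolev order: bounding $|(L^\infty_{\bar S_u}f,\mathscr{Z}^1_u-\mathscr{Z}^2_u)|$ in terms of $\|f\|_{W^4}\,\|\mathscr{Z}^1_u-\mathscr{Z}^2_u\|_{W^{-4}}$ is not possible. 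This is exactly why the paper detours through \eqref{E:phiZInfty}: there the integrand involves only $\widehat L^\infty_{\bar S_u}\varTheta_{t-u}\phi$, which is controlled in $W^4$ by $c\|\phi\|_{W^4}$, and a standard Gronwall in $W^{-4}$ yields $\|\mathscr{Z}^1_t-\mathscr{Z}^2_t\|_{W^{-4}}=0$. If you insist on staying with \eqref{E:fZInfty}, you must either reproduce this shift-operator trick for uniqueness or supply an independent argument; without it the proof is incomplete.
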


\begin{proof}
Every limit point $\mathscr{Z}$ of the sequence $Z^K$ satisfies,
for $\phi \in W^{4}$ and $t\in\T$,
\begin{multline} \label{E:phiZInfty}
(\phi,\mathscr{Z}_t) = (\varTheta_t\phi,\mathscr{Z}_0) 
+ \int_0^t \Big(-\partial_S h^\infty_{\bar{S}_u}(\mathscr{Z}_u) \varTheta_{t-u}\phi + \sum_{i\in\K} \varTheta_{t-u}\phi(i,0)\partial_S n^{i,\infty}_{\bar{S}_u}(\mathscr{Z}_u), \bar{S}_u\Big) du \\
+ \int_0^t \Big(- h^\infty_{\bar{S}_u}\varTheta_{t-u}\phi + \sum_{i\in\K} \varTheta_{t-u}\phi(i,0)n^{i,\infty}_{\bar{S}_u}, \mathscr{Z}_u\Big) du
+ \int_0^t (\varTheta_{t-u}\phi, d\tilde{M}^{\infty}_u).
\end{multline}
This can be established by showing the convergence of each and every term of \eqref{E:phiZK}. 
Furthermore, if $\mathscr{Z}^1$ and $\mathscr{Z}^2$ both are solutions to \eqref{E:phiZInfty} with $\mathscr{Z}^1_0=\mathscr{Z}^2_0$, 
by showing that $||\mathscr{Z}^1_t-\mathscr{Z}^2_t||_{W^{-4}}=0$ for $t\in \T$, we have the uniqueness.
Lastly, we note that \eqref{E:phiZInfty} is equivalent to \eqref{E:fZInfty}, 
which is followed by Fubini's theorem and that 
$\varTheta_{t-u}\phi(s) = \phi(s) + \int_u^t \varTheta_{w-u}\phi'(s)dw$.
\end{proof}

\subsection{Proof of Proposition \ref{P:EZtMeasure}} \label{S:LastProof}

\begin{proof} 
Using representation \eqref{E:phiZInfty} and noting that $\E\big[\int_0^t (\varTheta_{t-u}\phi, d\tilde{M}^{\infty}_u)\big]=0$, with $\nu_t: f \mapsto \E[(f,Z_t)]$, we have for $\phi\in W^4$,
\begin{multline} \label{E:PhiNu}
(\phi,\nu_t) = (\varTheta_t\phi,\nu_0) 
+ \int_0^t \Big(-(g^h_{\bar{S}_u,\ast},\nu_u) \varTheta_{t-u}\phi + \sum_{i\in\K} \varTheta_{t-u}\phi(i,0) (g^{n,i}_{\bar{S}_u,\ast},\nu_u), \bar{S}_u\Big)_\ast du \\
+ \int_0^t \Big(- h^\infty_{\bar{S}_u}\varTheta_{t-u}\phi + \sum_{i\in\K} \varTheta_{t-u}\phi(i,0)n^{i,\infty}_{\bar{S}_u}, \nu_u\Big) du.
\end{multline}
Thus, under the assumptions in the statement and (A2),
$$||\nu_t||_{W^{-4}} \le ||\nu_0||_{W^{-4}} + c_1 \big(1+(1,\bar S_0)e^{c_2t}\big) \int_0^t ||\nu_u||_{W^{-4}} du.$$
Gronwall's inequality then gives
$$||\nu_t||_{W^{-4}} \le ||\nu_0||_{W^{-4}} e^{c_1 \big(1+(1,\bar S_0)e^{c_2T}\big)T}.$$

Now, let $(\phi_m)_m$ be a sequence of functions in $C^\infty$ converging to $\phi \in C^0$.
By dominated convergence theorem, each term in \eqref{E:PhiNu} with $\phi_m$ converges.
Thus, \eqref{E:PhiNu} holds for $\phi\in C^0$.
Moreover, $\nu_t$ is a bounded linear operator.
Therefore, $\nu_t \in C^{-0}$; in other words, $\nu_t$ defines a signed measure.
\end{proof}

\section{Appendices for Section \ref{S:Monogamy}} \label{S:Appendix-Monogamy}

\subsection{Semimartingale representation of the serial monogamy mating system} \label{S:Appendix-Monogamy-model}

To obtain an equation for the population structure in the serial monogamy mating system, we need to define a few more terms. 
Let $R(v,w,t)$ be the number of marriages by time $t$ with females at ages not greater than $v$ and males at ages not greater than $w$. 
Let $Q^1(v,w,t)$ (resp. $Q^2(v,w,t)$) be the number of cases by time $t$ where female (resp. male) partners died at age not greater than $v$ when their male (resp. female) partners were at age not greater than $w$, and let $Q^3(v,w,t)$ count the number of events by time $t$ where couples are separated while both of the mates are alive and with ages not greater than $v$ and $w$.
We also let $D^\F(v,t)$ (resp. $D^\M(v,t)$) be the number of single females (resp. males) who died by time $t$ at age not greater that $v$, and $B^\F(t)$ (resp. $B^\M(t)$) be the number of females (reps. males) born by time $t$. 
Then we have, for test functions $f$ as specified on Section \ref{S:Monogamy-model},
\begin{align*}
(f,S^\F_t) &= (f,S^\F_0) + \int_0^t (\partial_v f,S^\F_u) du + f(\F,0,\infty) B^\F([0,t]) 
- \int_{\A\times[0,t]} f(\F,v,\infty) D^\F(dv,du) \\ 
&\q\q+ \int_{\A\times\A\times[0,t]} f(\F,v,\infty) Q^2(dv,dw,du) 
+ \int_{\A\times\A\times[0,t]} f(\F,v,\infty) Q^3(dv,dw,du) \\
&\q\q- \int_{\A\times\A\times[0,t]} f(\F,v,\infty) R(dv,dw,du), \\
(f,S^\M_t) &= (f,S^\M_0) + \int_0^t (\partial_w f,S^\M_u) du + f(\M,\infty,0) B^\M([0,t]) 
- \int_{\A\times[0,t]} f(\M,\infty,w) D^\M(dv,du) \\
&\q\q+ \int_{\A\times\A\times[0,t]} f(\M,\infty,w) Q^1(dv,dw,du) 
+ \int_{\A\times\A\times[0,t]} f(\M,\infty,w) Q^3(dv,dw,du) \\
&\q\q- \int_{\A\times\A\times[0,t]} f(\M,\infty,w) R(dv,dw,du), \\
(f,S^\FM_t) &= (f,S^\FM_0) + \int_0^t (\partial_v f + \partial_w f,S^\FM_u) du 
- \int_{\A\times\A\times[0,t]} f(\FM,v,w) Q^3(dv,dw,du) \\
&\q\q- \int_{\A\times\A\times[0,t]} f(\FM,v,w) Q^1(dv,dw,du) 
- \int_{\A\times\A\times[0,t]} f(\FM,v,w) Q^2(dv,dw,du) \\
&\q\q+ \int_{\A\times\A\times[0,t]} f(\FM,v,w) R(dv,dw,du). 
\end{align*}
Compensating the birth, death and marriage terms, we obtain a semimartingale representation for each of the three types, which can be combined and written as \eqref{E:semimartB}.
For instance, the following processes are local martingales: 
\begin{gather*}
B^i([0,t]) - \int_0^t (bm^i, S^\FM_u) du - \int_0^t (bm^i, S^\F_u) du, \\
\int_{\A\times[0,t]} g(v) D^\i(dv,du) - \int_0^t (h^ig, S^i_u) du, \\
\int_{\A\times\A\times[0,t]} g(v,w) R(dv,dw,du) - \int_0^t ((\rho(\Cdot,\ast)g(\Cdot,\ast), S^\F_u)_{\Cdot}, S^\M_u)_\ast du, \\
\int_{\A\times\A\times[0,t]} g(v,w) Q^i(dv,dw,du) - \int_0^t (h^ig, S^\FM_u) du. 
\end{gather*}
The predictable quadratic variation of the martingale can also be obtained in the usual way as in Section \ref{S:AppA-model}.  

\subsection{Total population size in serial monogamy mating system} \label{S:Appendix-Monogamy-TPS}

Before we proceed to establish the tightness of $\{\bar S^K_t\}$, we first consider the total population size, by taking the test function $f(i,v,w) = \1_\F(i)+\1_\M(i)+2\1_\FM(i)$. 
This gives a simple dynamics equation that can be easily analysed and controlled, the results of which will be useful in establishing the tightness of $\{\bar S^K_t\}$.

With a slight abuse of notation, write $X_t = (\1_\F+\1_\M+2\1_\FM,S_t)$ and $\bar X^K_t = X^K_t/K$. 
Note that we have 
\begin{multline} \label{E:Xbar}
\bar X^K_t = \bar X^K_0 
- \int_0^t \big(h^{\F,K}_{\bar S^K_u} \mathbf{1}_\F + h^{\M,K}_{\bar S^K_u} \mathbf{1}_\M - (h^{\F,K}_{\bar S^K_u}+h^{\M,K}_{\bar S^K_u}) \1_\FM, \bar S^K_u\big) du \\
+ \int_0^t \Big( (m^{\F,K}_{\bar S^K_u} + m^{\M,K}_{\bar S^K_u}) b^K_{\bar S^K_u} (\mathbf{1}_\FM+\mathbf{1}_\F), \bar S^K_u\Big) du 
+ \bar M^{X,K}_t,
\end{multline}
with
\begin{multline*}
\left<\bar M^{X,K}\right>_t = \int_0^t \frac1K \big(h^{\F,K}_{\bar S^K_u} \mathbf{1}_\F + h^{\M,K}_{\bar S^K_u} \mathbf{1}_\M + (h^{\F,K}_{\bar S^K_u}+h^{\M,K}_{\bar S^K_u}) \1_\FM, \bar S^K_u\big) du \\
+ \int_0^t \frac1K \Big( \big[\gamma^{\F\F,K}_{\bar S^K_u} + \gamma^{\M\M,K}_{\bar S^K_u} + 2 \gamma^{\F\M,K}_{\bar S^K_u}\big] b^K_{\bar S^K_u} (\mathbf{1}_\FM+\mathbf{1}_\F), \bar S^K_u\Big) du. 
\end{multline*}

Since the reproduction parameters are bounded, by Gronwall's inequality, we have
$\E[\bar X^K_t] \le \bar X^K_0 e^{ct}$ and thus $\sup_K \E[\bar X^K_t] < \infty$. 
In fact, we can further show that 
\begin{equation} \label{E:EsupXBnd}
\sup_K \E\Big[\sup_{t\in\T} \bar X^K_t\Big] < \infty. 
\end{equation}
From \eqref{E:Xbar}, notice that $(1,\bar S^K_u) \le \bar X^K_u$, we have 
$$\E\Big[\sup_{t\in\T} \bar X^K_t\Big] \le \bar X^K_0 + c_1 \int_0^T \E\big[\bar X^K_u\big] du + \E\Big[ \sup_{t\in\T} \bar M^{X,K}_t \Big].$$
Apply Jensen's and Doob's inequalities, 
\begin{multline*}
\E\Big[ \sup_{t\in\T} \bar M^{X,K}_t \Big]^2 
\le \E\Big[ \sup_{t\in\T} \big(\bar M^{X,K}_t\big)^2 \Big] 
\le 4 \E\Big[\big<\bar M^{X,K}\big>_T\Big] \\
\le c_2 \frac1K \int_0^T \E\big[\bar X^K_u\big] du 
\le c_2 \frac1K \int_0^T \bar X^K_0 e^{cu} du 
\le c_2 \frac1K \bar X^K_0 e^{cT}T 
\end{multline*}
and thus 
$$\E\Big[\sup_{t\in\T} \bar X^K_t\Big] \le \bar X^K_0 + c_1 \int_0^T \bar X^K_0 e^{cu} du + \Big(c_2 \frac1K \bar X^K_0 e^{cT}T \Big)^{1/2},$$
which gives \eqref{E:EsupXBnd}. 

In a similar way, we can bound $\E[(\bar X^K_t)^2]$ and show that 
$$\sup_K \E\Big[ \sup_{t\in\T} (\bar X^K_t)^2 \Big] < \infty.$$

\subsection{Proof of the Law of Large Numbers (serial monogamy mating system)} \label{S:Appendix-Monogamy-LLN}

\begin{proposition}
The sequence $\{\bar S^K_t\}$ is tight in $\D(\T,\mathcal{M})$. 
\end{proposition}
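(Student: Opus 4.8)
The plan is to mirror the proof of Proposition \ref{P:SbarKTight} and verify Jakubowski's criterion \cite{Jak86}: the compact-containment condition (J1) together with the tightness (J2) of each real-valued projection $(f,\bar S^K)$, the latter reduced to the Aldous--Rebolledo conditions (J2a)--(J2b) via the semimartingale decomposition $(f,\bar S^K_t) = V^{f,K}_t + \bar M^{f,K}_t$ read off from the scaled equation preceding Theorem \ref{T:MonogamyLLN}. The whole argument rests on the a priori control of the total size obtained in Section \ref{S:Appendix-Monogamy-TPS}: the first-moment bound $\sup_K\E[\bar X^K_t]<\infty$, the uniform bound \eqref{E:EsupXBnd}, and the second-moment bound $\sup_K\E[\sup_{t\in\T}(\bar X^K_t)^2]<\infty$, all exploited through the pointwise inequality $(1,\bar S^K_u)\le\bar X^K_u$.

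For (J1) I would take the compact sets $\mathcal{C}(\delta)=\{\mu\in\mathcal{M}:(1,\mu)\le\delta\}$ (compact because $\S$ is compact). Since $(1,\bar S^K_t)\le\bar X^K_t$, Markov's inequality applied to \eqref{E:EsupXBnd} yields $\P(\sup_{t\le T}(1,\bar S^K_t)>\delta)\le c/\delta$, so a $\delta_\eta$ can be chosen making $\liminf_K\P(\bar S^K_t\in\mathcal{C}(\delta_\eta)\,\forall t\in\T)>1-\eta$. Condition (J2a) is equally immediate: $|(f,\bar S^K_t)|\le||f||_{C^0}(1,\bar S^K_t)\le||f||_{C^0}\bar X^K_t$, so $\sup_K\E[\bar X^K_t]<\infty$ together with Markov gives the marginal tightness.

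The substance lies in (J2b). For the drift increment I would bound every term of $V^{f,K}$ using the boundedness of the parameters (C0'): the death, splitting and birth terms are linear in the measure and contribute $\le c||f||_{C^1}\bar X^K_u$, whereas the marriage drift, carrying the \emph{bounded} factor $K\rho^K$ and an extra integration against $\bar S^K_u$, is bilinear and contributes $\le c||f||_{C^0}(\bar X^K_u)^2$. Hence
$$|V^{f,K}_{\tau^K+\zeta\wedge T}-V^{f,K}_{\tau^K}| \le c||f||_{C^1}\int_{\tau^K}^{\tau^K+\zeta\wedge T}\big(\bar X^K_u+(\bar X^K_u)^2\big)\,du \le c||f||_{C^1}\,\delta\,\sup_{t\le T}\big(\bar X^K_t+(\bar X^K_t)^2\big),$$
whose expectation is bounded uniformly in $K$ by the moment estimates, so (J2b)(i) follows from Markov upon choosing $\delta$ small. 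For the quadratic-variation increment (J2b)(ii) the same estimates apply, now with the overall factor $1/K$ present in $\big<\bar M^{f,K}\big>$ (the marriage contribution also being $O(1/K)$ since $\rho^K=K^{-1}(K\rho^K)$), giving
$$\big|\big<\bar M^{f,K}\big>_{\tau^K+\zeta\wedge T}-\big<\bar M^{f,K}\big>_{\tau^K}\big| \le \frac{c}{K}||f||_{C^0}^2\,\delta\,\sup_{t\le T}\big(\bar X^K_t+(\bar X^K_t)^2\big);$$
taking expectations and applying Markov completes (J2b)(ii), and thus (J2).

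The main obstacle, and the reason Section \ref{S:Appendix-Monogamy-TPS} was inserted beforehand, is the \emph{bilinear} mating mechanism: unlike the linear multi-type model of Proposition \ref{P:SbarKTight}, the marriage drift and its quadratic variation involve a double integration against $\bar S^K$, so the natural bounds produce products of masses controlled only by $(\bar X^K)^2$ rather than by $\bar X^K$. Getting these terms to the right order also hinges on recognising that the bounded quantity is $K\rho^K$ (condition (C0')), not $\rho^K$ itself, so that the factor of $K$ from the mating-rate scaling exactly offsets the mass scaling in the drift while leaving a genuine $1/K$ in the quadratic variation. Once this bookkeeping is in place, the remaining steps are routine repetitions of the computations in Proposition \ref{P:SbarKTight}.
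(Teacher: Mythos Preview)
Your proposal is correct and follows essentially the same approach as the paper's own proof: Jakubowski's criterion with (J1) via the compact sets $\mathcal{C}(\delta)$ and the estimate \eqref{E:EsupXBnd}, and (J2b) via the bound $\int_{\tau^K}^{\tau^K+\zeta}\big((1,\bar S^K_u)+(1,\bar S^K_u)^2\big)\,du\le\delta\sup_{t\le T}\big(\bar X^K_t+(\bar X^K_t)^2\big)$, exploiting precisely the first and second moment controls from Section~\ref{S:Appendix-Monogamy-TPS} and the observation that $\rho^K=K^{-1}(K\rho^K)$ furnishes the missing $1/K$ in the marriage term of the quadratic variation. Your (J2a) is in fact slightly more direct than the paper's (which re-expands the semimartingale), but the substance is identical; just be mindful that in this section the test functions carry two age variables, so the relevant $C^1$-type norm is the one controlling $\partial_v f$ and $\partial_w f$ (the paper writes $\|f\|_{C^{1,1}}$).
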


\begin{proof}
The tightness is obtained by establishing (J1) and (J2). 
Observe that 
$$\P\Big( \sup_{t\in\T} (1,\bar S^K_t) > \delta \Big) \le \frac1\delta \E\Big[ \sup_{t\in\T} (1,\bar S^K_t) \Big] \le \frac1\delta \E\Big[ \sup_{t\in\T} X^K_t \Big] \le \frac c \delta.$$ 
Thus, for each $\eta$, there exists $\delta_\eta$ such that 
$\P( \sup_{t\in\T} (1,S^K_t) > \delta_\eta) \le \eta$. 
Therefore (J1) holds with compact set 
$$C(\delta_\eta) = \{ \mu\in\M: (1,\mu) \le \delta_\eta\}.$$

For (J2a), we have from the semimartingale representation of $(f,\bar S^K_t)$, 
$$\E\big[ (f,\bar S^K_t) \big] 
\le c_1 ||f||_{C^{1,1}} \Big\{ (1,\bar S^K_0) + \int_0^t \Big( \E\big[ (1,\bar S^K_u) \big] + \E\big[ (1,\bar S^K_u)^2 \big] \Big) du + \E\big[ |\bar M^{f,K}_t| \big] \Big\}$$
and
\begin{multline*}
\E\big[ |\bar M^{f,K}_t| \big]^2 \le \E\big[ |\bar M^{f,K}_t|^2 \big] = \E\big[ \big< \bar M^{f,K} \big>_t \big] \\
\le c_2 ||f||_{C^{1,1}}^2 \frac1K \int_0^t \Big( \E\big[ (1,\bar S^K_u) \big] + \E\big[ (1,\bar S^K_u)^2 \big] \Big) du. 
\end{multline*}
Notice that 
$$\E\big[ (1,\bar S^K_u) \big] + \E\big[ (1,\bar S^K_u)^2 \big] 
\le \E\big[ \bar X^K_u \big] + \E\big[ (\bar X^K_u)^2 \big] \le c_T$$
from the previous section. 
Therefore, (J2a) follows by Markov's inequality. 

Now, let $\zeta\le\delta$ and $\tau^K$ be a sequence of stopping times bounded by $T$. 
We have 
$$\E\big[ |\bar V^{f,K}_{\tau^K+\zeta\wedge T} - \bar V^{f,K}_{\tau^K}| \big] 
\le c_3 ||f||_{C^{1,1}} \E\bigg[ \int_{\tau^K}^{\tau^K+\zeta\wedge T} \big( (1,\bar S^K_u) + (1,\bar S^K_u)^2 \big) du \bigg]$$ 
and 
\begin{align*}
&\E\bigg[ \int_{\tau^K}^{\tau^K+\zeta\wedge T} \big( (1,\bar S^K_u) + (1,\bar S^K_u)^2 \big) du \bigg] 
\le \int_0^\zeta \E\big[ (1,\bar S^K_{\tau^K+u\wedge T}) + (1,\bar S^K_{\tau^K+u\wedge T})^2 \big] du \\
&\q\q\q\le \int_0^\zeta \Big( \E\Big[ \sup_{r\le T} (1,\bar S^K_r) \Big] + \E\Big[ \sup_{r\le T} (1,\bar S^K_r)^2 \Big] \Big) du \\
&\q\q\q\le \int_0^\zeta \Big( \E\Big[ \sup_{r\le T} \bar X^K_r \Big] + \E\Big[ \sup_{r\le T} (\bar X^K_r)^2 \Big] \Big) du 
\ \le\ c_T \delta,
\end{align*}
where the last inequality follows from the results in the previous section. 
Similarly, 
\begin{multline*}
\E\big[ |\big<\bar M^{f,K}\big>_{\tau^K+\zeta\wedge T} - \big<\bar M^{f,K}\big>_{\tau^K}| \big] \\
\le c_4 ||f||_{C^{1,1}}^2 \frac1K \E\bigg[ \int_{\tau^K}^{\tau^K+\zeta\wedge T} \big( (1,\bar S^K_u) + (1,\bar S^K_u)^2 \big) du \bigg] 
\le c'_T\delta ||f||_{C^{1,1}}^2.
\end{multline*}
(J2b) thus follows by Markov's inequality.
\end{proof}

We see that the martingale $\bar M^{f,K}$ converges to 0, since its predictable quadratic variation vanishes. 

\begin{proof} [of Theorem \ref{T:MonogamyLLN} -- LLN]
Suppose that $\mathscr{S}$ is a limit point of $\bar S^K$. 
Note that for any bounded function $f$ and reproduction parameter $q = h^\F, h^\M, h^\FM, bm^\F, bm^\M$, 
\begin{multline} \label{E:fqS}
|(f q^K_{\bar S^K_u}, \bar S^K_u) - (f q^\infty_{\mathscr{S}_u}, \mathscr{S}_u)| 
\le |(f (q^K_{\bar S^K_u} - q^\infty_{\mathscr{S}_u}), \bar S^K_u)| + |(f q^\infty_{\mathscr{S}_u}, \bar S^K_u- \mathscr{S}_u)| \\ 
\le ||f||_\infty ||q^K_{\bar S^K_u} - q^\infty_{\mathscr{S}_u}||_\infty (1,\bar S^K_u) 
+ ||f||_\infty ||q^\infty_{\mathscr{S}_u}||_\infty ||\bar S^K_u- \mathscr{S}_u|| 
\end{multline}
converges to zero as $K\to\infty$. 
Indeed, the first term on the right hand side vanishes since
$$||q^K_{\bar S^K_u} - q^\infty_{\mathscr{S}_u}||_\infty 
\le ||q^K_{\bar S^K_u} - q^K_{\mathscr{S}_u}||_\infty + ||q^K_{\mathscr{S}_u} - q^\infty_{\mathscr{S}_u}||_\infty 
\to 0$$
due to (C1') and (C2'); clearly, the last term in \eqref{E:fqS} also vanishes. 
Similarly, 
\begin{align*}
&\big| \big(\big( g(\Cdot,\ast) K\rho^K_{\bar S^K_u}(\Cdot,\ast) \mathbf{1}_\F, \bar S^K_u\big)_{\Cdot} \mathbf{1}_\M, \bar S^K_u\big)_\ast 
- \big(\big( g(\Cdot,\ast) \rho^\infty_{\mathscr{S}_u}(\Cdot,\ast) \mathbf{1}_\F, \mathscr{S}_u\big)_{\Cdot} \mathbf{1}_\M,\mathscr{S}_u\big)_\ast \big| \\
&\q\le \big| \big(\big( g(\Cdot,\ast) K\rho^K_{\bar S^K_u}(\Cdot,\ast) \mathbf{1}_\F, \bar S^K_u\big)_{\Cdot} \mathbf{1}_\M, \bar S^K_u\big)_\ast 
- \big(\big( g(\Cdot,\ast) \rho^\infty_{\mathscr{S}_u}(\Cdot,\ast) \mathbf{1}_\F, \bar S^K_u\big)_{\Cdot} \mathbf{1}_\M, \bar S^K_u\big)_\ast \big| \\
&\q\q+ \big| \big(\big( g(\Cdot,\ast) \rho^\infty_{\mathscr{S}_u}(\Cdot,\ast) \mathbf{1}_\F, \bar S^K_u\big)_{\Cdot} \mathbf{1}_\M, \bar S^K_u\big)_\ast 
- \big(\big( g(\Cdot,\ast) \rho^\infty_{\mathscr{S}_u}(\Cdot,\ast) \mathbf{1}_\F, \mathscr{S}_u\big)_{\Cdot} \mathbf{1}_\M, \bar S^K_u\big)_\ast \big| \\
&\q\q+ \big| \big(\big( g(\Cdot,\ast) \rho^\infty_{\mathscr{S}_u}(\Cdot,\ast) \mathbf{1}_\F, \mathscr{S}_u\big)_{\Cdot} \mathbf{1}_\M, \bar S^K_u\big)_\ast 
- \big(\big( g(\Cdot,\ast) \rho^\infty_{\mathscr{S}_u}(\Cdot,\ast) \mathbf{1}_\F, \mathscr{S}_u\big)_{\Cdot} \mathbf{1}_\M, \mathscr{S}_u\big)_\ast \big| \\
&\q\le ||g||_\infty ||K\rho^K_{\bar S^K_u} - \rho^\infty_{\mathscr{S}_u}||_\infty (1,\bar S^K_u)^2 
+ ||g||_\infty ||\rho^\infty_{\mathscr{S}_u}||_\infty ||\bar S^K_u-\mathscr{S}_u|| (1,\bar S^K_u) \\
&\q\q+ ||g||_\infty ||\rho^\infty_{\mathscr{S}_u}||_\infty (1,\mathscr{S}_u) ||\bar S^K_u-\mathscr{S}_u||, 
\end{align*}
can be shown vanishing as $K\to\infty$ using the trick as above with (C1') and (C2'), and that $\mathscr{S}_u$ is a limit point of $\bar S^K_u$. 
Thus limit point of $\bar S^K$ satisfies \eqref{E:SbarInftyMonogamy}. 

It remains to show that the limit is unique. 
This is done by considering \eqref{E:SbarInftyMonogamy} with test functions that depend also on time, $f(i,v,w,t)$. 
Now, fix $t\in\T$, for $u\le t$, take $f$ of the form $f(i,v,w,u) = \phi(i,v+t-u,w+t-u) = \widehat\Theta_{t-u} \phi(i,v,w)$ as in Proposition \ref{P:AltRepS}.
Then, $\mathscr{S}$ is shown to satisfy the following equation: 
\begin{align}
\notag&(\phi,\mathscr{S}_t) = (\widehat\Theta_t\phi,\mathscr{S}_0) 
- \int_0^t \big(\widehat\Theta_{t-u}\phi h^{\F,\infty}_{\mathscr{S}_u} \mathbf{1}_\F + \widehat\Theta_{t-u}\phi h^{\M,\infty}_{\mathscr{S}_u} \mathbf{1}_\M, \mathscr{S}_u\big) du \\
\notag&\q+ \int_0^t \Big(\Big[\big(\widehat\Theta_{t-u}\phi(\M,\infty,\cdot)-\widehat\Theta_{t-u}\phi(\FM,\cdot,\cdot)\big) h^{\F,\infty}_{\mathscr{S}_u} + \big(\widehat\Theta_{t-u}\phi(\F,\cdot,\infty)-\widehat\Theta_{t-u}\phi(\FM,\cdot,\cdot)\big) h^{\M,\infty}_{\mathscr{S}_u} \Big] \mathbf{1}_\FM, \mathscr{S}_u\Big) du \\
\notag&\q+ \int_0^t \Big(\big(\widehat\Theta_{t-u}\phi(\F,\cdot,\infty)+\widehat\Theta_{t-u}\phi(\M,\infty,\cdot)-\widehat\Theta_{t-u}\phi(\FM,\cdot,\cdot)\big) h^{\FM,\infty}_{\mathscr{S}_u} \mathbf{1}_\FM, \mathscr{S}_u\Big) du \\
\notag&\q+ \int_0^t \Big( \big[\widehat\Theta_{t-u}\phi(\F,0,\infty)m^{\F,\infty}_{\mathscr{S}_u} + \widehat\Theta_{t-u}\phi(\M,\infty,0)m^{\M,\infty}_{\mathscr{S}_u}\big] b^\infty_{\mathscr{S}_u} (\mathbf{1}_\FM+\mathbf{1}_\F), \mathscr{S}_u\Big) du \\
&\q- \int_0^t \Big(\Big( \big[\widehat\Theta_{t-u}\phi(\F,\Cdot,\infty)+\widehat\Theta_{t-u}\phi(\M,\infty,\ast)-\widehat\Theta_{t-u}\phi(\FM,\Cdot,\ast)\big] \rho^\infty_{\mathscr{S}_u}(\Cdot,\ast) \mathbf{1}_\F, \mathscr{S}_u\Big)_{\Cdot} \mathbf{1}_\M, \mathscr{S}_u\Big)_\ast du. \label{E:SbarInftyAltMonogamy}
\end{align}
The uniqueness of the limit is then achieved by considering two processes $\mathscr{S}^1$ and $\mathscr{S}^2$ that are both solutions to \eqref{E:SbarInftyAltMonogamy} with the same initial point $\mathscr{S}^1_0=\mathscr{S}^2_0$. 
Then we show that $|(\phi,\mathscr{S}^1_t-\mathscr{S}^2_t)|$ is bounded by $c_T ||\phi||_\infty ||\mathscr{S}^1_u-\mathscr{S}^2_u||$. 
Therefore, $\mathscr{S}^1=\mathscr{S}^2$ by Gronwall's inequality. 
\end{proof}

\subsection{Proof of the Central Limit Theorem (serial monogamy mating system)} \label{S:Appendix-Monogamy-CLT}

The mechanism in proving the Central Limit Theorem in this serial monogamy mating system would be slightly different from that in Section \ref{S:CLTProof} due to the iterative bracket, coming from the coupling. 
To overcome the issue arose from the coupling, we need the stopping times 
$$\tau^K_N = \inf\{t\in\T: \bar X^K_t>N\}.$$

As before, we obtain an alternative representation equation for $Z^K$. 
This is done by extending \eqref{E:fZKMonogamy} to have test function depending on time, $f = f(i,v,w,t)$ 
and then, with fixed $t$, taking $f(i,v,w,u) = \phi(i,v+t-u,w+t-u) = \Theta_{t-u}\phi(i,v,w)$ for $u\le t$ and $\phi \in W^{-4}$ as in Remark \ref{R:ShiftOp}. 
We then have  
{\footnotesize
\begin{align} 
\notag&(\phi,Z_t^K) = (\Theta_t\phi,Z_0^K) \\
\notag&\q- \int_0^t \sqrt{K} \big(\Theta_{t-u}\phi (h^{\F,K}_{\bar S^K_u}-h^{\F,\infty}_{\bar S_u}) \mathbf{1}_\F + \Theta_{t-u}\phi (h^{\M,K}_{\bar S^K_u}-h^{\M,\infty}_{\bar S_u}) \mathbf{1}_\M, \bar S_u\big) du 
- \int_0^t \big(\Theta_{t-u}\phi h^{\F,K}_{\bar S^K_u} \mathbf{1}_\F + \Theta_{t-u}\phi h^{\M,K}_{\bar S^K_u} \mathbf{1}_\M, Z^K_u\big) du \\
\notag&\q+ \int_0^t \sqrt{K} \Big(\Big[\big(\Theta_{t-u}\phi(\M,\infty,\cdot)-\Theta_{t-u}\phi(\FM,\cdot,\cdot)\big) (h^{\F,K}_{\bar S^K_u}-h^{\F,\infty}_{\bar S_u}) + \big(\Theta_{t-u}\phi(\F,\cdot,\infty)-\Theta_{t-u}\phi(\FM,\cdot,\cdot)\big) (h^{\M,K}_{\bar S^K_u}-h^{\M,\infty}_{\bar S_u}) \Big] \mathbf{1}_\FM, \bar S_u\Big) du \\
\notag&\q+ \int_0^t \Big(\Big[\big(\Theta_{t-u}\phi(\M,\infty,\cdot)-\Theta_{t-u}\phi(\FM,\cdot,\cdot)\big) h^{\F,K}_{\bar S^K_u} + \big(\Theta_{t-u}\phi(\F,\cdot,\infty)-\Theta_{t-u}\phi(\FM,\cdot,\cdot)\big) h^{\M,K}_{\bar S^K_u} \Big] \mathbf{1}_\FM, Z^K_u\Big) du \\
\notag&\q+ \int_0^t \sqrt{K} \Big(\big(\Theta_{t-u}\phi(\F,\cdot,\infty)+\Theta_{t-u}\phi(\M,\infty,\cdot)-\Theta_{t-u}\phi(\FM,\cdot,\cdot)\big) (h^{\FM,K}_{\bar S^K_u}-h^{\FM,\infty}_{\bar S_u}) \mathbf{1}_\FM, \bar S_u\Big) du \\
\notag&\q+ \int_0^t \Big(\big(\Theta_{t-u}\phi(\F,\cdot,\infty)+\Theta_{t-u}\phi(\M,\infty,\cdot)-\Theta_{t-u}\phi(\FM,\cdot,\cdot)\big) h^{\FM,K}_{\bar S^K_u} \mathbf{1}_\FM, Z^K_u\Big) du \\
\notag&\q+ \int_0^t \sqrt{K} \Big( \big[\Theta_{t-u}\phi(\F,0,\infty) (b^K_{\bar S^K_u}m^{\F,K}_{\bar S^K_u} - b^\infty_{\bar S_u}m^{\F,\infty}_{\bar S_u}) + \Theta_{t-u}\phi(\M,\infty,0) (b^K_{\bar S^K_u}m^{\M,K}_{\bar S^K_u} - b^\infty_{\bar S_u}m^{\M,\infty}_{\bar S_u}) \big] (\mathbf{1}_\FM+\mathbf{1}_\F), \bar S_u\Big) du \\
\notag&\q+ \int_0^t \Big( \big[\Theta_{t-u}\phi(\F,0,\infty)m^{\F,K}_{\bar S^K_u} + \Theta_{t-u}\phi(\M,\infty,0)m^{\M,K}_{\bar S^K_u}\big] b^K_{\bar S^K_u} (\mathbf{1}_\FM+\mathbf{1}_\F), Z^K_u\Big) du \\
\notag&\q- \int_0^t \sqrt{K} \Big(\Big( [\Theta_{t-u}\phi(\F,\Cdot,\infty)+\Theta_{t-u}\phi(\M,\infty,\ast)-\Theta_{t-u}\phi(\FM,\Cdot,\ast)] (K \rho^K_{\bar S^K_u}(\Cdot,\ast) - \rho^\infty_{\bar S_u}(\Cdot,\ast)) \mathbf{1}_\F, \bar S_u\Big)_{\Cdot} \mathbf{1}_\M, \bar S_u\Big)_\ast du \\
\notag&\q- \int_0^t \Big(\Big( [\Theta_{t-u}\phi(\F,\Cdot,\infty)+\Theta_{t-u}\phi(\M,\infty,\ast)-\Theta_{t-u}\phi(\FM,\Cdot,\ast)] K \rho^K_{\bar S^K_u}(\Cdot,\ast) \mathbf{1}_\F, Z^K_u\Big)_{\Cdot} \mathbf{1}_\M, \bar S_u\Big)_\ast du \\
\notag&\q- \int_0^t \Big(\Big( [\Theta_{t-u}\phi(\F,\Cdot,\infty)+\Theta_{t-u}\phi(\M,\infty,\ast)-\Theta_{t-u}\phi(\FM,\Cdot,\ast)] K \rho^K_{\bar S^K_u}(\Cdot,\ast) \mathbf{1}_\F, \bar S_u\Big)_{\Cdot} \mathbf{1}_\M, Z^K_u\Big)_\ast du \\
&\q+ \int_0^t (\Theta_{t-u}\phi, d\tilde M^K_u), \label{E:phiZKMonogamy}
\end{align}
}
where $\tilde M$ is the measure such that $(f,\tilde M^K_t) = \tilde M^{f,K}_t$. 
This representation helps establishing the next result, which in turn is used in proving tightness of the sequence $Z^K$. 

\begin{proposition} \label{P:PZK>R}
For any $\epsilon>0$, there exists $R>0$ such that for all $K$, 
$$\P(||Z^K_t||_{W^{-2}} >R) \le \epsilon.$$
\end{proposition}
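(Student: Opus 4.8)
The plan is to mirror the proof of Proposition~\ref{P:ZW-2Bound}: first establish a bound on $\E[\|Z^K_t\|_{W^{-2}}]$ that is uniform in $K$, and then read off the tail estimate, since Markov's inequality gives $\P(\|Z^K_t\|_{W^{-2}}>R)\le R^{-1}\E[\|Z^K_t\|_{W^{-2}}]$. Throughout I would work with the alternative representation \eqref{E:phiZKMonogamy}, testing against $\phi\in W^2$ and using the shift $\Theta_{t-u}\phi$ precisely so that no derivative of $\phi$ is ever paired against $Z^K_u$ (this is why \eqref{E:phiZKMonogamy} is preferable to \eqref{E:fZKMonogamy}: $L^K$ lowers the Sobolev index, and only the shifted form keeps every pairing of $Z^K_u$ against a genuine $W^2$ function). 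The initial term is controlled by $\|\Theta_t\phi\|_{W^2}\|Z^K_0\|_{W^{-2}}$ with $\sup_K\|Z^K_0\|_{W^{-2}}<\infty$ from (A4'), and the martingale term $\int_0^t(\Theta_{t-u}\phi,d\tilde M^K_u)$ is handled exactly as in Proposition~\ref{P:ZW-2Bound} through the Riesz representation theorem, Parseval's identity and Doob's inequality applied to the monogamy quadratic variation: after summing over an orthonormal basis of $W^2$, its predictable density is dominated by a constant times $(1,\bar S^K_u)+(1,\bar S^K_u)^2\le \bar X^K_u+(\bar X^K_u)^2$, whose expectation is bounded uniformly in $K$ by the total-population-size estimates of Appendix~\ref{S:Appendix-Monogamy-TPS}.

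The single- and cross-type drift contributions — the $\sqrt K$ terms and the terms linear in $Z^K_u$ arising from $h^{\F},h^{\M},h^{\FM}$ and $b m^{\F},bm^{\M}$ — are treated exactly as in the multitype case. Under (A2') and (A3') the analogue of Proposition~\ref{P:ParaBound} gives $\sqrt K\,\|q^K_{\bar S^K_u}-q^\infty_{\bar S_u}\|_\infty\le c(1+\|Z^K_u\|_{W^{-4}})$ for $q=h^{\F},h^{\M},h^{\FM},bm^{\F},bm^{\M}$ and, crucially, for $q=K\rho^K$ as well, while the analogue of Proposition~\ref{P:LKNormBnd} bounds the relevant operators uniformly in $K$ and $S$. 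Pairing against the deterministic, mass-bounded $\bar S_u$ and using the embedding $\|\cdot\|_{W^{-4}}\le c\|\cdot\|_{W^{-2}}$, each of these produces an integrand dominated by $c(1+\|Z^K_u\|_{W^{-2}})$, which is exactly what a Gronwall argument absorbs.

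The genuinely new difficulty, and the point I expect to be the main obstacle, is the \emph{bilinear} marriage term. Because the marriage rate enters through the iterated bracket $((\,g\,\rho\,\mathbf{1}_\F,\mu)_{\Cdot}\mathbf{1}_\M,\mu)_\ast$, substituting $\bar S^K_u=\bar S_u+Z^K_u/\sqrt K$ into the two measure slots leaves, beyond the $\sqrt K$-difference term and the two terms linear in $Z^K_u$ that are controlled as above, a contribution that is \emph{quadratic} in $Z^K_u$ and carries a prefactor $1/\sqrt K$, schematically $\tfrac1{\sqrt K}\big(\big(g\,K\rho^K_{\bar S^K_u}\mathbf{1}_\F,Z^K_u\big)_{\Cdot}\mathbf{1}_\M,Z^K_u\big)_\ast$. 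This has no counterpart in the non-interacting multitype proof and cannot be closed by Gronwall on its own, since a naive estimate yields only $\tfrac{c}{\sqrt K}\|Z^K_u\|_{W^{-2}}^2$. The device to tame it is the stopping time $\tau^K_N=\inf\{t\in\T:\bar X^K_t>N\}$: writing $Z^K_u/\sqrt K=\bar S^K_u-\bar S_u$ in one of the two slots and bounding the resulting signed measure by its total mass, $\|\bar S^K_u-\bar S_u\|_{W^{-2}}\le c(\bar X^K_u+C)$, turns this term into $c\,\|Z^K_u\|_{W^{-2}}(\bar X^K_u+C)$; on $[0,\tau^K_N)$ the factor $\bar X^K_u$ is bounded by $N$, so after stopping the bilinear contribution is dominated by $c(N)\|Z^K_u\|_{W^{-2}}$, i.e. it becomes linear with a constant depending only on $N$, not on $K$. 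The uniform $C^3$ bound on $K\rho^K$ from (A2') is what keeps the kernel's Sobolev norm under control inside the iterated bracket.

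Carrying this estimate out for the stopped quantity, Gronwall's inequality yields $\E[\|Z^K_{t\wedge\tau^K_N}\|_{W^{-2}}]\le C(N)$ uniformly in $K$. I would then conclude by the two-step splitting
$$\P(\|Z^K_t\|_{W^{-2}}>R)\le \P(\tau^K_N\le t)+\P\big(\|Z^K_{t\wedge\tau^K_N}\|_{W^{-2}}>R\big)\le \P\Big(\sup_{s\le T}\bar X^K_s>N\Big)+\frac{C(N)}{R},$$
where the first probability is at most $c/N$ uniformly in $K$ by \eqref{E:EsupXBnd} and Markov's inequality; choosing $N$ with $c/N<\epsilon/2$ and then $R$ with $C(N)/R<\epsilon/2$ gives the claim. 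The only bookkeeping beyond the multitype argument is reconciling the stopping at $\tau^K_N$ with the fixed-terminal-time shift representation, which is routine once the quadratic coupling remainder has been isolated and dominated as above.
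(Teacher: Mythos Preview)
Your approach is correct and matches the paper's: the same stopping time $\tau^K_N$, the same localisation of the bilinear marriage term to obtain an $N$-dependent linear Gronwall bound, and the same two-step splitting of the tail probability. The only noteworthy difference is that the paper multiplies the fixed-time shift representation by the indicator $\mathbf{1}_{\tau^K_N>T}$ before taking expectations (rather than literally stopping the process), which is precisely the device that resolves the ``reconciling'' you flag at the end; relatedly, the paper keeps one measure slot in the marriage term at $\bar S^K_u$ from the outset, so the factor $(1,\bar S^K_u)$ appears directly in the estimate without first isolating and then rewriting a separate quadratic remainder.
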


\begin{proof}
Let $N > \sup_{t\in\T} \bar X^\infty_t$, where $\bar X^\infty_t = (\1_\F+\1_\M+2\1_\FM,\bar S_t)$. 
Define stopping time $\tau^K_N = \inf\{t\in\T: \bar X^K_t >N\}$. 
Then, we have 
\begin{align}
\notag\P(||Z^K_t||_{W^{-2}} > R) 
&= \P(||Z^K_t||_{W^{-2}} > R, \tau^K_N > T) + \P(||Z^K_t||_{W^{-2}} > R, \tau^K_N \le T) \\ 
&\le \frac1R \E[ ||Z^K_t||_{W^{-2}} \1_{\tau^K_N>T}] + \P(\tau^K_N \le T). 
\label{E:PZ>R}
\end{align}
Observe that 
$$\P(\tau^K_N \le T) 
= \P\Big(\sup_{t\le T} \bar X^K_t > N\Big) 
\le \frac1N \E\Big[\sup_{t\le T} \bar X^K_t \Big],$$
thus, by \eqref{E:EsupXBnd}, for any $\epsilon$, there exists $N$ such that $\P(\tau^K_N \le T) \le \epsilon/2$ for all $K$. 

For the first term in \eqref{E:PZ>R}, we proceed from representation in \eqref{E:phiZK}. 
For $\phi\in W^2$, as $||\Theta_r\phi||_{W^2} \le c||\phi||_{W^2}$ for $r\in\T$, we have the following: 
\begin{itemize}[leftmargin=*]
\item [$\bullet$]
$|(\Theta_t\phi,Z^K_0)| \le ||\Theta_t\phi||_{W^2} ||Z^K_0||_{W^{-2}} \le c ||\phi||_{W^2} ||Z^K_0||_{W^{-2}}$,
\item [$\bullet$]
$\sqrt{K} \big|\big(\Theta_{t-u}\phi (q^K_{\bar S^K_u} - q^\infty_{\bar S_u}), \bar S_u\big) \big| 
\le ||\Theta_t\phi||_\infty ||\sqrt{K}(q^K_{\bar S^K_u} - q^\infty_{\bar S_u})||_\infty (1,\bar S_u)
\le c ||\phi||_{W^2} (1+||Z^K_u||_{W^{-4}}) \le c' ||\phi||_{W^2} (1+||Z^K_u||_{W^{-2}})$,
\item [$\bullet$]
$\big|\big(\Theta_{t-u}\phi q^K_{\bar S^K_u}, Z^K_u\big)\big| \le ||\Theta_{t-u}\phi||_{W^2} ||q^K_{\bar S^K_u}||_{C^2} ||Z^K_u||_{W^{-2}} \le c ||\phi||_{W^2} ||Z^K_u||_{W^{-2}}$, 
\item [$\bullet$]
$\sqrt{K} \big|\big(\big( [\Theta_{t-u}\phi(\F,\Cdot,\infty)+\Theta_{t-u}\phi(\M,\infty,\ast)-\Theta_{t-u}\phi(\FM,\Cdot,\ast)] (K \rho^K_{\bar S^K_u}(\Cdot,\ast) - \rho^\infty_{\bar S_u}(\Cdot,\ast)) \mathbf{1}_\F, \bar S_u\big)_{\Cdot} \mathbf{1}_\M, \bar S_u\big)_\ast \big| 
\le c ||\phi||_{W^2} (1+||Z^K_u||_{W^{-4}}) (1,\bar S_u)^2 
\le c' ||\phi||_{W^2} (1+||Z^K_u||_{W^{-2}})|$,
\item [$\bullet$]
$\big|\big(\big( [\Theta_{t-u}\phi(\F,\Cdot,\infty)+\Theta_{t-u}\phi(\M,\infty,\ast)-\Theta_{t-u}\phi(\FM,\Cdot,\ast)] K \rho^K_{\bar S^K_u}(\Cdot,\ast) \mathbf{1}_\F, Z^K_u\big)_{\Cdot} \mathbf{1}_\M, \bar S_u\big)_\ast \big| 
\le c ||\Theta_t\phi||_{W^2} ||K\rho^K_{\bar S^K_u}||_{C^2} ||Z^K_u||_{W^{-2}} (1,\bar S_u) 
\le c' ||\phi||_{W^2} ||Z^K_u||_{W^{-2}}$,
\item [$\bullet$]
$\big| \big(\big( [\Theta_{t-u}\phi(\F,\Cdot,\infty)+\Theta_{t-u}\phi(\M,\infty,\ast)-\Theta_{t-u}\phi(\FM,\Cdot,\ast)] K \rho^K_{\bar S^K_u}(\Cdot,\ast) \mathbf{1}_\F, \bar S^K_u\big)_{\Cdot} \mathbf{1}_\M, Z^K_u\big)_\ast \big| 
= \big| \big(\big( [\Theta_{t-u}\phi(\F,\Cdot,\infty)+\Theta_{t-u}\phi(\M,\infty,\ast)-\Theta_{t-u}\phi(\FM,\Cdot,\ast)] K \rho^K_{\bar S^K_u}(\Cdot,\ast) \mathbf{1}_\M, Z^K_u\big)_\ast \mathbf{1}_\F, \bar S^K_u\big)_{\Cdot} \big|
\le c ||\phi||_{W^2} ||Z^K_u||_{W^{-2}} (1,\bar S^K_u)$,
\item [$\bullet$]
$\int_0^t (\Theta_{t-u}\phi, d\tilde M^K_u) \le ||\phi||_{W^2} \big|\big|\int_0^t \Theta_{t-u}^* d\tilde M^K_u \big|\big|_{W^{-2}}$,
\end{itemize}
where we write $\int_0^t \Theta_{t-u}^* d\tilde M^K_u$ for the operator such that $(f, \int_0^t \Theta_{t-u}^* d\tilde M^K_u) = \int_0^t (\Theta_{t-u}f, d\tilde M^K_u)$.
Therefore, 
$$||Z^K_t||_{W^{-2}} \le c \bigg( ||Z^K_0||_{W^{-2}} + \int_0^t \big( 1+ ||Z^K_u||_{W^{-2}} + ||Z^K_u||_{W^{-2}} (1,\bar S^K_u) \big) du \bigg) + \Big|\Big|\int_0^t \Theta_{t-u}^* d\tilde M^K_u \Big|\Big|_{W^{-2}}.$$ 
Multiply this by $\1_{\tau^K_N>T}$ and take expectation, we then obtain 
\begin{align*}
\E\Big[ ||Z^K_t||_{W^{-2}} \1_{\tau^K_N>T}\Big] 
&\le c_1 \bigg( ||Z^K_0||_{W^{-2}} \E\Big[\1_{\tau^K_N>T}\Big] \\
&\q\q+ \int_0^t \E \Big[ \big( 1+ ||Z^K_u||_{W^{-2}} + ||Z^K_u||_{W^{-2}} (1,\bar S^K_u) \big) \1_{\tau^K_N>T} \Big] du \bigg) \\
&\q\q\q\q+ \E\Big[ \Big|\Big|\int_0^t \Theta_{t-u}^* d\tilde M^K_u \Big|\Big|_{W^{-2}} \1_{\tau^K_N>T} \Big] \\
&\le c_2 \bigg( ||Z^K_0||_{W^{-2}} + t + \int_0^t \E \big[ \big( ||Z^K_u||_{W^{-2}} + ||Z^K_u||_{W^{-2}} N \big) \1_{\tau^K_N>T} \big] du \bigg) \\
&\q\q\q\q+ \E\Big[ \Big|\Big|\int_0^t \Theta_{t-u}^* d\tilde M^K_u \Big|\Big|_{W^{-2}} \Big] 
\end{align*}
as $(1,\bar S^K_u) \le \bar X^K_u$ which is less than $N$ on the set $\{\tau^K_N>T\}$. 
To deal with the martingale term, 
let $(p_l)_{l\ge1}$ denote a complete orthonormal basis of $W^2$. 
Then, for $r\le t$, 
\begin{multline*}
\E\Big[\big|\big|\int_0^r \Theta_{t-u}^* d\tilde M^K_u \big|\big|_{W^{-2}}^2 \Big] 
= \E\bigg[ \sum_{l\ge1} \Big( \int_0^r (\Theta_{t-u}p_l, d\tilde M^K_u) \Big)^2 \bigg] \\
\le \sum_{l\ge1} \E\bigg[ \Big< \int_0^\cdot (\Theta_{t-u}p_l, d\tilde M^K_u) \Big>_r \bigg] 
\le \E \bigg[ c\int_0^r \big((1,\bar S^K_u) + (1,\bar S^K_u)^2\big) du \bigg] 
\le c_T, 
\end{multline*}
where the third line is due to 
$\big| \sum_{l\ge1} p_l(s_1)p_l(s_2) \big| \le c$ for any $s_1,s_2 \in W^2$ 
and the results in Section \ref{S:Appendix-Monogamy-TPS}. 
Take $r=t$, 
$\E\big[||\int_0^t \Theta_{t-u}^* d\tilde M^K_u ||_{W^{-2}} \big]^2 \le \E\big[||\int_0^t \Theta_{t-u}^* d\tilde M^K_u ||_{W^{-2}}^2 \big] \le c_T$. 
Hence, 
$$\E[ ||Z^K_t||_{W^{-2}} \1_{\tau^K_N>T}] \le c_3 \bigg( ||Z^K_0||_{W^{-2}} + T + (1+N) \int_0^t \E \big[ ||Z^K_u||_{W^{-2}} \1_{\tau^K_N>T} \big] du + 1 \bigg)$$
and by Gronwall's inequality, 
$$\E[ ||Z^K_t||_{W^{-2}} \1_{\tau^K_N>T}] \le c_3 \big(||Z^K_0||_{W^{-2}} + T + 1\big) e^{c_3(1+N)T} \le c_T(N).$$
Thus, for any $\epsilon$ and $N$, there exists $R$ such that 
$\frac1R \E[ ||Z^K_t||_{W^{-2}} \1_{\tau^K_N>T}] \le \epsilon/2$ for all $K$. 

The assertion then follows.
\end{proof}

\begin{proposition} \label{P:ZW-3}
Let $N > \sup_{t\in\T} \bar X^\infty_t$, where $\bar X^\infty_t = (\1_\F+\1_\M+2\1_\FM,\bar S_t)$. 
Define stopping time $\tau^K_N = \inf\{t\in\T: \bar X^K_t >N\}$. 
Then, 
$$\E\Big[ \sup_{t\in\T} ||Z^K_t||_{W^{-3}} \1_{\tau^K_N>T} \Big] \le c_T(N).$$
\end{proposition}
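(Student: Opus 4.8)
The plan is to mirror the proof of Proposition \ref{P:ZW-3Bound} from the multi-type case, working from the direct semimartingale representation \eqref{E:fZKMonogamy} of $(f,Z^K_t)$ for $f\in W^3$ (not the shifted representation \eqref{E:phiZKMonogamy}), since there $\tilde M^{f,K}$ is a genuine $t$-martingale and Doob's inequality is available for the $\sup_{t\le T}$. The one structural novelty is that the truncation $\1_{\tau^K_N>T}$ must be carried through the whole estimate; as I explain below, this is forced not by the drift itself but by the fact that the only $W^{-2}$ control at our disposal, coming from the proof of Proposition \ref{P:PZK>R}, is itself truncated.

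First I would bound the right-hand side of \eqref{E:fZKMonogamy} term by term. The transport term obeys $|(\partial_vf+\partial_wf,Z^K_u)|\le c\,||f||_{W^3}||Z^K_u||_{W^{-2}}$, the derivative costing exactly one Sobolev order; this is the term that forces $||Z^K_u||_{W^{-2}}$ into the estimate. Each $\sqrt K$-difference term is paired against the deterministic limit $\bar S_u$, so the monogamy analogue of Proposition \ref{P:ParaBound} gives a bound $c\,(1+||Z^K_u||_{W^{-4}})\,||f||_{W^3}(1,\bar S_u)\le c\,(1+||Z^K_u||_{W^{-2}})\,||f||_{W^3}(1,\bar S_u)$, with $(1,\bar S_u)$ deterministic and bounded via \eqref{E:1SInfty}. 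The terms linear in $Z^K_u$ are handled by the operator-norm estimates (monogamy analogue of Proposition \ref{P:LKNormBnd}) together with $|(g,Z^K_u)|\le ||g||_{W^3}||Z^K_u||_{W^{-2}}$. The marriage terms, although bilinear in the measure, pair $Z^K_u$ (or the $\sqrt K$-difference of the coupling rate) against bounded coefficients and the \emph{deterministic} limit $\bar S_u$, so their iterated brackets contribute only the bounded factors $(1,\bar S_u)$ and $(1,\bar S_u)^2$; hence the drift itself requires no truncation.

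Collecting these estimates, taking $\sup_{||f||_{W^3}\le1}$, then $\sup_{t\le T}$, multiplying by $\1_{\tau^K_N>T}$ (which is constant in $u$ and so may be moved inside the time integral), and taking expectations, I expect to reach
\begin{multline*}
\E\Big[\sup_{t\le T}||Z^K_t||_{W^{-3}}\1_{\tau^K_N>T}\Big]
\le c_1(N)\bigg(||Z^K_0||_{W^{-3}}+1
+\int_0^T\E\big[||Z^K_u||_{W^{-2}}\1_{\tau^K_N>T}\big]\,du\bigg)\\
+\E\Big[\sup_{t\le T}||\tilde M^K_t||_{W^{-3}}\Big].
\end{multline*}
The martingale term I would treat exactly as in \eqref{E:T1forM}: with $(p^3_l)_{l\ge1}$ a complete orthonormal basis of $W^3$, the Riesz representation theorem, Parseval's identity and Doob's inequality yield $\E[\sup_{t\le T}||\tilde M^K_t||^2_{W^{-3}}]\le 4\sum_{l\ge1}\E[\langle\tilde M^{p^3_l,K}\rangle_T]$, and the monogamy predictable quadratic variation, summed over $l$, is bounded by $c\,((1,\bar S^K_u)+(1,\bar S^K_u)^2)$, whose time integral has finite expectation uniformly in $K$ by the second-moment control on $\bar X^K$ obtained in Section \ref{S:Appendix-Monogamy-TPS}. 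Thus the martingale contribution needs no truncation.

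The closing step is to substitute the truncated $W^{-2}$ bound already established inside the proof of Proposition \ref{P:PZK>R}, namely $\sup_u\E[||Z^K_u||_{W^{-2}}\1_{\tau^K_N>T}]\le c_T(N)$ uniformly in $K$, so that the time integral is at most $T\,c_T(N)$; together with $\sup_K||Z^K_0||_{W^{-3}}<\infty$ from (A4') and embedding, every term is bounded by a constant $c_T(N)$ independent of $K$. No further Gronwall step is needed, the truncated $W^{-2}$ estimate having already absorbed one. I expect the main obstacle to be conceptual rather than computational: in the monogamy setting there is no unconditional $W^{-2}$ bound analogous to Proposition \ref{P:ZW-2Bound}, because a pointwise-in-$t$ $W^{-2}$ estimate must use the shifted representation, where the bilinear marriage term produces a factor $(1,\bar S^K_u)$ tamed only on $\{\tau^K_N>T\}$. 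Carrying the truncation through the $W^{-3}$ estimate is precisely the device that lets it borrow this truncated $W^{-2}$ control and close.
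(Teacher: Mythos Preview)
Your approach is essentially the paper's: work from \eqref{E:fZKMonogamy}, bound term by term to extract $\|Z^K_u\|_{W^{-2}}$ from the drift, multiply through by $\1_{\tau^K_N>T}$, control the martingale via Parseval--Doob and the second-moment bounds on $\bar X^K$ from Section~\ref{S:Appendix-Monogamy-TPS}, and close using the truncated $W^{-2}$ estimate $\E[\|Z^K_u\|_{W^{-2}}\1_{\tau^K_N>T}]\le c_T(N)$ obtained inside the proof of Proposition~\ref{P:PZK>R}.

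One point of correction: your claim that the marriage drift terms pair $Z^K_u$ only against the deterministic $\bar S_u$ is not quite right. A correct telescoping of the bilinear term leaves one factor as the random $\bar S^K_u$ (compare the last bullet in the proof of Proposition~\ref{P:PZK>R}, which uses $\bar S^K_u$ even though the displayed equation \eqref{E:phiZKMonogamy} writes $\bar S_u$), and the paper's drift bound in Proposition~\ref{P:ZW-3} accordingly contains $\|Z^K_u\|_{W^{-2}}(1,\bar S^K_u)$. So the truncation is doing double duty: it bounds $(1,\bar S^K_u)\le \bar X^K_u\le N$ on $\{\tau^K_N>T\}$ \emph{and} it grants access to the truncated $W^{-2}$ input. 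Since you carry the indicator through the whole estimate and already allow an $N$-dependent constant, your argument still closes; only the stated reason for needing the truncation is incomplete.
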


\begin{proof}
Let $f\in W^3$. 
From \eqref{E:fZKMonogamy}, take absolute value and bound each term on the right hand side, similar as in Proposition \ref{P:PZK>R}. With the inclusions of the spaces, $W^{-2} \hookrightarrow W^{-3} \hookrightarrow W^{-4}$, and the boundedness of the model parameters, we have
\begin{multline*}
|(f,Z^K_t)| \le ||f||_{W^3} \Big( ||Z^K_0||_{W^{-3}} \\
+ c_1 \int_0^t \big( 1+ ||Z^K_u||_{W^{-2}} + ||Z^K_u||_{W^{-2}} (1,\bar S^K_u) \big) du + ||\tilde M^K_t||_{W^{-3}} \Big). 
\end{multline*}
Thus, 
$$||Z^K_t||_{W^{-3}} \le ||Z^K_0||_{W^{-3}} + c_1 \int_0^t \Big( 1+ ||Z^K_u||_{W^{-2}} \big(1+ (1,\bar S^K_u)\big) \Big) du + ||\tilde M^K_t||_{W^{-3}}$$
and 
\begin{multline*}
\E\Big[ \sup_{t\in\T} ||Z^K_t||_{W^{-3}} \1_{\tau^K_N>T} \Big] \le ||Z^K_0||_{W^{-3}} \\
+ c_1 \Big( T + (1+N) \int_0^T \E\big[ ||Z^K_u||_{W^{-2}} \1_{\tau^K_N>T} \big] du \Big) 
+ \E\Big[ \sup_{t\in\T} ||\tilde M^K_t||_{W^{-3}} \Big].
\end{multline*}
Now, let $(p_l)_{l\ge1}$ denote a complete orthonormal basis of $W^3$. Then, 
\begin{multline*}
\E\Big[ \sup_{t\in\T} ||\tilde M^K_t||_{W^{-3}} \Big] 
= \E\Big[ \sup_{t\in\T} \sum_{l\ge1} (\tilde M^{p_l,K}_t)^2 \Big] 
\le \sum_{l\ge1} \E\Big[ \sup_{t\in\T} (\tilde M^{p_l,K}_t)^2 \Big] \\
\le 4 \sum_{l\ge1} \E\Big[ \big<\tilde M^{p_l,K} \big>_T \Big] 
\le c \int_0^T \E\big[ (1,\bar S^K_u) + (1,\bar S^K_u)^2 \big] du 
\le c_T. 
\end{multline*}
From the proof of Proposition \ref{P:PZK>R}, we also have 
$\E\big[ ||Z^K_u||_{W^{-2}} \1_{\tau^K_N>T} \big] \le c_T(N)$. 
The assertion thus follows. 
\end{proof}

\begin{proposition}
The sequence $Z^K$ is tight in $\D(\T,W^{-4})$. 
\end{proposition}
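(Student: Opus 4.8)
The plan is to verify the two tightness criteria (T1) and (T2) set out in Section \ref{S:AppTightZ}, exactly as was done for the multi-type process in Proposition \ref{P:TightZM}, but with the localisation by the stopping times $\tau^K_N$ to tame the iterated (coupling) bracket. Condition (T1) is the easy half: since $W^{-2}$ is Hilbert--Schmidt embedded in $W^{-4}$, every ball $B_{W^{-2}}(R) = \{\mu\in W^{-2}: ||\mu||_{W^{-2}}\le R\}$ is compact in $W^{-4}$, and Proposition \ref{P:PZK>R} supplies, for each $\epsilon>0$, a radius $R$ with $\P(Z^K_t\notin B_{W^{-2}}(R)) = \P(||Z^K_t||_{W^{-2}}>R)\le \epsilon$ uniformly in $K$. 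Hence $(Z^K_t)_{K\ge1}$ is tight in $W^{-4}$ for each fixed $t\in\T$.

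For (T2) I would use the semimartingale decomposition $Z^K_t = \tilde V^K_t + \tilde M^K_t$ read off from \eqref{E:fZKMonogamy}, where $\tilde V^K$ collects all the drift integrals and $\tilde M^K$ is the measure-valued martingale with $(f,\tilde M^K_t)=\tilde M^{f,K}_t$. The key device is localisation: fix $N>\sup_{t\in\T}\bar X^\infty_t$ and set $\tau^K_N=\inf\{t\in\T:\bar X^K_t>N\}$ as in Propositions \ref{P:PZK>R} and \ref{P:ZW-3}. Given $\epsilon_1,\epsilon_2>0$, by \eqref{E:EsupXBnd} and Markov's inequality,
$$\P(\tau^K_N\le T) = \P\Big(\sup_{t\le T}\bar X^K_t>N\Big) \le \frac1N\,\E\Big[\sup_{t\le T}\bar X^K_t\Big]$$
is at most $\epsilon_2/2$ for all $K$ once $N$ is large enough. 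On the complementary event $\{\tau^K_N>T\}$ every factor $(1,\bar S^K_u)$ (and hence also $(1,\bar S^K_u)^2$ arising from the coupling) is bounded by $N$, so all the bounds used in the proof of Proposition \ref{P:PZK>R} hold with constants $c(N)$. It therefore suffices to bound the increments on this good event and add $\P(\tau^K_N\le T)$.

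For the stronger forms (T2a$'$) and (T2b$'$) of \cite[Theorem 11]{FanEtal19}: on $\{\tau^K_N>T\}$ the drift increment over $[\tau^K,\tau^K+\zeta\wedge T]$ is controlled, term by term in \eqref{E:phiZKMonogamy}, by an integral of quantities of order $(1+||Z^K_u||_{W^{-4}})$ and $||Z^K_u||_{W^{-3}}$ weighted by the bounded mass factors, giving
$$\big|\big|\tilde V^K_{\tau^K+\zeta\wedge T}-\tilde V^K_{\tau^K}\big|\big|_{W^{-4}}\,\1_{\tau^K_N>T} \le c(N)\,\zeta\Big(1+\sup_{t\le T}||Z^K_t||_{W^{-3}}\,\1_{\tau^K_N>T}\Big),$$
whose expectation is finite and independent of $K$ by Proposition \ref{P:ZW-3}; Markov's inequality then yields (T2a) after shrinking $\delta$. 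For the bracket one writes $\big<\big<\tilde M^K\big>\big>_t=\int_0^t\sum_{l\ge1}\Gamma^K_u p^4_l\,du$ and uses the bound $|\sum_{l\ge1}\Gamma^K_u p^4_l|\le c\big((1,\bar S^K_u)+(1,\bar S^K_u)^2\big)$ (the orthonormal-basis estimate together with (C0$'$)); on the good event this is $\le c(N)$, so the increment over a window of length $\zeta$ is $\le c(N)\zeta$, giving (T2b). The main obstacle is precisely the iterated-bracket coupling term, which produces the quadratic factor $(1,\bar S^K_u)^2$ that is not uniformly integrable on its own; the stopping-time localisation is what converts these into the deterministic constants $c(N)$ while keeping the excised probability uniformly small. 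With (T1) and (T2) in hand, tightness of $Z^K$ in $\D(\T,W^{-4})$ follows; the same jump estimate as in the multi-type case (bounding $||\Delta Z^K_t||_{W^{-4}}$ via (A1$'$)) then upgrades this to C-tightness.
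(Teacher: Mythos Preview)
Your proposal is correct and follows essentially the same route as the paper: (T1) via Proposition \ref{P:PZK>R} and the Hilbert--Schmidt compactness, (T2a) via the localisation $\{\tau^K_N>T\}$ combined with Proposition \ref{P:ZW-3}, and (T2b) via the orthonormal-basis bound on $\sum_l\Gamma^K_u p^4_l$. Two minor remarks: first, you invoke the stronger conditions (T2a$'$)/(T2b$'$) but then actually verify (T2a)/(T2b) directly by bounding the stopped increments---the paper does the latter here (unlike in Proposition \ref{P:TightZM}), since (T2a$'$) in the form $\sup_K\E[\sup_t\|\Lambda^K_t\|_{W^{-4}}]<\infty$ is not available without localisation; second, for (T2b) the paper does \emph{not} localise but instead uses the global bound $\sup_K\E[\sup_t(\bar X^K_t)^2]<\infty$ from Section \ref{S:Appendix-Monogamy-TPS}, which controls the quadratic mass factor in expectation directly---your localised version also works, but is slightly more than needed there.
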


\begin{proof}
In a similar way as in Proposition \ref{P:TightZM}, 
(T1) follows from Proposition \ref{P:PZK>R}. 

For (T2a), observe that 
$$\P( ||\tilde V^K_{\tau^K+\zeta\wedge T} - \tilde V^K_{\tau^K}||_{W^{-4}} > \epsilon_1) 
\le \P( ||\tilde V^K_{\tau^K+\zeta\wedge T} - \tilde V^K_{\tau^K}||_{W^{-4}} > \epsilon_1, \tau^K_N >T) + \P(\tau^K_N\le T).$$
As shown in the proof of Proposition \ref{P:PZK>R}, for any $\epsilon_2$, there exists $N$ such that $\P(\tau^K_N\le T) < \epsilon_2/2$. 
Using same technique as before, we can also show that 
$$||\tilde V^K_{\tau^K+\zeta\wedge T} - \tilde V^K_{\tau^K}||_{W^{-4}} 
\le c \int_{\tau^K}^{\tau^K+\zeta\wedge T} \Big(1+ ||Z^K_u||_{W^{-3}} \big(1+(1,\bar S^K_u)\big) \Big) du$$ 
and thus 
\begin{align*}
\E\big[ ||\tilde V^K_{\tau^K+\zeta\wedge T} - \tilde V^K_{\tau^K}||_{W^{-4}} \1_{\tau^K_N >T} \big] 
&\le c\bigg( \zeta + \E\bigg[ \int_{\tau^K}^{\tau^K+\zeta\wedge T} ||Z^K_u||_{W^{-3}} \big(1+(1,\bar S^K_u)\big) du  \1_{\tau^K_N >T} \bigg] \bigg) \\
&\le c\bigg( \zeta + (1+N) \E\bigg[ \int_0^\zeta ||Z^K_{\tau^K+u\wedge T}||_{W^{-3}} du  \1_{\tau^K_N >T} \bigg] \bigg) \\
&\le c\bigg( \zeta + (1+N) \int_0^\zeta \E\Big[ \sup_{u\le T} ||Z^K_u||_{W^{-3}}  \1_{\tau^K_N >T} \bigg] du \bigg) \\
&\le c_T(N) \delta, 
\end{align*}
where the last inequality follows from Proposition \ref{P:ZW-3}. 
Therefore, 
\begin{align*}
\P( ||\tilde V^K_{\tau^K+\zeta\wedge T} - \tilde V^K_{\tau^K}||_{W^{-4}} > \epsilon_1, \tau^K_N >T) 
&\le \frac1{\epsilon_1} \E\big[ ||\tilde V^K_{\tau^K+\zeta\wedge T} - \tilde V^K_{\tau^K}||_{W^{-4}} \1_{\tau^K_N >T} \big] \\
&\le \frac{c_T(N) \delta}{\epsilon_1},
\end{align*}
and for any $\epsilon_1$, $\epsilon_2$ and $N$, there exists $\delta$ such that the probabilty does not exceed $\epsilon_2/2$. 

For (T2b), let $(p_l)_{l\ge1}$ denote a complete orthonormal basis of $W^4$. 
Note that $||\tilde M^K_t||_{W^{-4}}^2 = \sum_{l\ge1} (\tilde M^{p_l,K}_t)^2$ and 
$\big<\big<\tilde M^K\big>\big>_t = \sum_{l\ge1} \big<\tilde M^{p_l,K}\big>_t$. 
So, 
\begin{align*}
\E\big[\big| \big<\big<\tilde M^K\big>\big>_{\tau^K+\zeta\wedge T} - \big<\big<\tilde M^K\big>\big>_{\tau^K}\big|\big] 
&= \E\bigg[\bigg| \sum_{l\ge1} \big<\tilde M^{p_l,K}\big>_{\tau^K+\zeta\wedge T} - \sum_{l\ge1} \big<\tilde M^{p_l,K}\big>_{\tau^K} \bigg|\bigg] \\
&\le \E\bigg[ c \int_{\tau^K}^{\tau^K+\zeta\wedge T} \big((1,\bar S^K_u) + (1,\bar S^K_u)^2\big) du \bigg] \\
&\le c \delta \bigg( \E\bigg[ \sup_{u\le T} (1,\bar S^K_u) \bigg] + \E\bigg[ \sup_{u\le T} (1,\bar S^K_u)^2 \bigg] \bigg) \\
&\le c_T \delta. 
\end{align*}
\end{proof}

\begin{proposition}
$Z^K$ and $\tilde M^K$ are C-tight, that is, all limit points of $Z^K$ and $\tilde M^K$ are in $\mathbb{C}(\T, W^{-4})$. 
\end{proposition}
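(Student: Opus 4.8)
The plan is to follow the same route as in the multi-type case (the argument in Section~\ref{S:AppTightZ}), relying on the criterion of Jacod and Shiryaev \cite[Proposition VI 3.26(iii)]{JacShi03}: since $Z^K$ has already been shown to be tight in $\D(\T,W^{-4})$, it will be C-tight provided that for every $u\in\T$ and $\epsilon>0$,
$$\lim_{K\to\infty}\P\Big(\sup_{t\le u}||\Delta Z^K_t||_{W^{-4}}>\epsilon\Big)=0,$$
that is, the largest jump must vanish in probability. Because $\bar S$ is deterministic and continuous, the jumps of $Z^K=\sqrt K(\bar S^K-\bar S)$ coincide with those of $\tfrac1{\sqrt K}S^K$, so the whole task reduces to a uniform bound on $|(f,S^K_t-S^K_{t-})|$ for $f\in W^4$.

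First I would enumerate the events at which $S^K$ jumps. In the serial monogamy system these are: a birth (adding $\xi^\F$ new units at state $(\F,0,\infty)$ and $\xi^\M$ at $(\M,\infty,0)$); the death of a single individual (removing one Dirac from $S^\F$ or $S^\M$); the death of one partner of a couple (removing a $\delta_{(\FM,v,w)}$ and adding the corresponding single-state Dirac); a separation with both mates alive (removing a couple Dirac and adding two single Diracs); and a marriage (removing two single Diracs and adding one couple Dirac). For every non-birth event the change in $(f,S^K)$ is a signed sum of at most three values of $f$, hence bounded by $c\,||f||_\infty$; for a birth event it is bounded by $(\xi^\F+\xi^\M)\,||f||_\infty\le 2\Xi\,||f||_\infty$. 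Using the embedding $W^4\hookrightarrow C^0$ together with (A1'), this yields
$$|(f,\Delta Z^K_t)|=\frac1{\sqrt K}|(f,S^K_t-S^K_{t-})|\le\frac{c}{\sqrt K}\,||f||_{W^4}\,(1+\Xi),$$
and therefore $||\Delta Z^K_t||_{W^{-4}}\le\frac{c}{\sqrt K}(1+\Xi)$ uniformly in $t$.

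With this bound in hand, Markov's inequality gives
$$\P\Big(\sup_{t\le u}||\Delta Z^K_t||_{W^{-4}}>\epsilon\Big)\le\frac1\epsilon\,\frac{c}{\sqrt K}\big(1+\E[\Xi]\big),$$
which tends to $0$ as $K\to\infty$ since $\Xi$ is integrable (being square integrable by (A1')). This establishes the C-tightness of $Z^K$. Finally, since the bounded-variation part of $Z^K$ in \eqref{E:fZKMonogamy} consists only of time integrals and is thus continuous, $Z^K$ and $\tilde M^K$ have identical jumps, $\Delta Z^K_t=\Delta\tilde M^K_t$; hence $\tilde M^K$ satisfies the same estimate and is C-tight as well, so that all limit points of both sequences lie in $\mathbb{C}(\T,W^{-4})$.

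The step I expect to require the most care is the bookkeeping of the jump contributions in the marriage and separation events, which are new relative to the multi-type model. One must verify that the coupling mechanism, namely the rearrangement of mass between the couple type $\FM$ and the single types $\F$ and $\M$, only moves a bounded number of units of mass at each event, and thus introduces no factor growing with $K$ or with the population size. Once this is confirmed, the jump estimate closes exactly as in the multi-type case and the remaining steps are routine.
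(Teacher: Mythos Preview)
Your proposal is correct and follows essentially the same approach as the paper: both rely on the Jacod--Shiryaev criterion \cite[Proposition VI.3.26]{JacShi03}, bound $|(f,\Delta Z^K_t)|$ by $\tfrac{c}{\sqrt K}\|f\|_\infty(1+\Xi)$ through a case analysis of birth, death, and marriage events, invoke the embedding $W^4\hookrightarrow C^0$ and Markov's inequality, and then transfer the conclusion to $\tilde M^K$ via $\Delta Z^K_t=\Delta\tilde M^K_t$. Your enumeration of the jump types is in fact more explicit than the paper's, but the argument is the same.
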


\begin{proof}
Note that $Z^K$ jumps when $S^K$ jumps, which jumps when there is an event associated with birth, death or marriage. 
Let $f\in W^4$. Then, assuming that the number of offspring is bounded by $\Xi$ which has finite mean and variance, 
\begin{align*}
&|(f,\Delta Z^K_t)| = \frac1{\sqrt{K}} |(f,S^K_t-S^K_{t-})| \\
&\q\le \frac1{\sqrt{K}} \Big( |f(1,0,\infty)| \Xi + |f(2,\infty,0)| \Xi 
+ \sup_v |f(1,v,\infty)| + \sup_w |f(2,\infty,w)| + \sup_{v,w} |f(3,v,w)| \Big) \\
&\q\le \frac1{\sqrt{K}} c ||f||_\infty (1+\Xi)
\end{align*}
and 
$$\E\big[ \sup_{t\le u} ||\Delta Z^K_t||_{W^{-4}} \bigg] \le \frac{c}{\sqrt{K}} (1+\E[\Xi]).$$ 
It follows that, for any $u$ and $\epsilon>0$, 
$$\P\bigg(\sup_{t\le u} ||\Delta Z^K_t||_{W^{-4}} >\epsilon \bigg) 
\le \frac1\epsilon \E\big[ \sup_{t\le u} ||\Delta Z^K_t||_{W^{-4}} \bigg] 
\le \frac{c}{\epsilon\sqrt{K}} (1+\E[\Xi]),$$
which converges to zero as $K$ tends to infinity. 
This, together with the tightness of $Z^K$, shows that the sequence is C-tight (see e.g. \cite[Proposition VI.3.26]{JacShi03}). 

Similarly, $\tilde M^K$ is C-tight, since $\tilde M^K$ has the same jumps as $Z^K$. 
\end{proof}

\begin{proposition}
$\tilde M^K$ converges in $\D(\T,W^{-4})$ to $\tilde M^\infty$, defined such that for each $f\in W^4$, $(f, \tilde M^\infty) = \tilde M^{f,\infty}$ is a martingale with predictable quadratic variation \eqref{E:MinftyPQV}.
\end{proposition}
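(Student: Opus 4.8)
The plan is to follow the multi-type argument of Proposition~\ref{P:MInfty}, importing the coupling estimates already developed for the serial-monogamy Law of Large Numbers. Since tightness and C-tightness of $\tilde M^K$ in $\D(\T,W^{-4})$ have been established, it suffices to identify every limit point uniquely, and for this I would invoke a martingale central limit theorem (e.g. \cite[Theorem VIII.3.11]{JacShi03}) applied to the real-valued projections $\tilde M^{f,K}_t = (f,\tilde M^K_t) = \frac1{\sqrt K} M^{f,K}_t$, for $f\in W^4$.

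First I would establish the convergence of predictable quadratic variations. For fixed $f$, $\big<\tilde M^{f,K}\big>_t = \int_0^t \Gamma^K_u f\,du$, where the integrand is the quadratic-in-$f$ functional of $\bar S^K_u$ built from the rates $h^{i,K}$, $\gamma^{ij,K}$, $b^K$ and $K\rho^K$ displayed in Theorem~\ref{T:CLTMonogamy}. Using the Law of Large Numbers $\bar S^K\to\bar S$ (Theorem~\ref{T:MonogamyLLN}), the rate convergences in (C0')--(C2') and (A0')--(A2'), and exactly the three-term splitting used in the estimate \eqref{E:fqS} (including the nested-bracket estimate for the $\rho$ term already carried out in the proof of Theorem~\ref{T:MonogamyLLN}), I would show $\big<\tilde M^{f,K}\big>_t \to \big<\tilde M^{f,\infty}\big>_t$ for each $t$, the limit being the \emph{deterministic} expression of Theorem~\ref{T:CLTMonogamy}. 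The uniform integrability needed to pass these limits is supplied by the second-moment bounds of Section~\ref{S:Appendix-Monogamy-TPS}, in particular $\sup_K\E[\sup_{t\in\T}(\bar X^K_t)^2]<\infty$, which dominates the $(1,\bar S^K_u)^2$ factors arising from the coupling term.

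Next I would verify the negligibility of jumps. The estimate obtained in the C-tightness proof, $||\Delta\tilde M^K_t||_{W^{-4}}\le \frac{c}{\sqrt K}(1+\Xi)$ with $\Xi$ square integrable by (A1'), gives $|\Delta\tilde M^{f,K}_t|\le \frac{c}{\sqrt K}||f||_{W^4}(1+\Xi)$, so the conditional Lindeberg condition $\E\big[\sum_{s\le t}|\Delta\tilde M^{f,K}_s|^2\1_{|\Delta\tilde M^{f,K}_s|>\epsilon}\big]\to0$ holds for every $\epsilon>0$. Together with the previous step, the martingale CLT then yields that $\tilde M^{f,K}$ converges to a continuous Gaussian martingale $\tilde M^{f,\infty}$ with the stated predictable quadratic variation; continuity and Gaussianity are forced by the vanishing jumps and the deterministic limiting bracket. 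Applying the same argument jointly to finitely many test functions, and polarizing to obtain the cross-brackets $\big<\tilde M^{f,\infty},\tilde M^{g,\infty}\big>$, pins down the full covariance functional of the limit.

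Finally, I would combine tightness with this identification: any limit point of $\tilde M^K$ in $\D(\T,W^{-4})$ must have projections equal to the continuous Gaussian martingales just described, and since the prescribed covariance functional determines a $W^{-4}$-valued Gaussian martingale uniquely, all limit points coincide with $\tilde M^\infty$, whence the whole sequence converges. I expect the main obstacle to be the convergence of the coupling (iterated-bracket) term in the predictable quadratic variation: one must check both that the scaling $K\rho^K\to\rho^\infty$ feeds correctly into the bracket and that the nested measure integrals converge, which is precisely where the second-moment control of $\bar X^K$ and the monogamy-specific estimate from Theorem~\ref{T:MonogamyLLN} are essential; the remaining steps are routine adaptations of Proposition~\ref{P:MInfty}.
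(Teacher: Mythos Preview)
Your proposal is correct and follows essentially the same approach as the paper, which simply refers back to Proposition~\ref{P:MInfty} (itself a reference to \cite{FanEtal19}): identify the limit of the real-valued projections $\tilde M^{f,K}$ via the martingale CLT by verifying convergence of the predictable brackets (using the LLN and the monogamy-specific coupling estimates) and negligibility of jumps, then combine with the already-established tightness in $\D(\T,W^{-4})$. You have in fact spelled out more detail than the paper does, and your identification of the iterated-bracket term as the only place requiring the monogamy-specific second-moment control is accurate.
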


\begin{proof}
See proof of Proposition \ref{P:MInfty}
\end{proof}

Finally, Theorem \ref{T:CLTMonogamy} follows. 

\begin{proof} [of Theorem \ref{T:CLTMonogamy}]
Every limit point $\mathscr{Z}$  of $Z^K$ satisfies, for $\phi\in W^4$, 
{\footnotesize
\begin{align}
\notag&(\phi,\mathscr{Z}_t) = (\Theta_t\phi,\mathscr{Z}_0) \\
\notag&\q- \int_0^t \big(\Theta_{t-u}\phi \partial_Sh^{\F,\infty}_{\bar S_u}(\mathscr{Z}_u) \mathbf{1}_\F + \Theta_{t-u}\phi \partial_Sh^{\M,\infty}_{\bar S_u}(\mathscr{Z}_u) \mathbf{1}_\M, \bar S_u\big) du 
- \int_0^t \big(\Theta_{t-u}\phi h^{\F,\infty}_{\bar S_u} \mathbf{1}_\F + \Theta_{t-u}\phi h^{\M,\infty}_{\bar S_u} \mathbf{1}_\M, \mathscr{Z}_u\big) du \\
\notag&\q+ \int_0^t \Big(\Big[\big(\Theta_{t-u}\phi(\M,\infty,\cdot)-\Theta_{t-u}\phi(\FM,\cdot,\cdot)\big) \partial_Sh^{\F,\infty}_{\bar S_u}(\mathscr{Z}_u) + \big(\Theta_{t-u}\phi(\F,\cdot,\infty)-\Theta_{t-u}\phi(\FM,\cdot,\cdot)\big) \partial_Sh^{\M,\infty}_{\bar S_u}(\mathscr{Z}_u) \Big] \mathbf{1}_\FM, \bar S_u\Big) du \\
\notag&\q+ \int_0^t \Big(\Big[\big(\Theta_{t-u}\phi(\M,\infty,\cdot)-\Theta_{t-u}\phi(\FM,\cdot,\cdot)\big) h^{\F,\infty}_{\bar S_u} + \big(\Theta_{t-u}\phi(\F,\cdot,\infty)-\Theta_{t-u}\phi(\FM,\cdot,\cdot)\big) h^{\M,\infty}_{\bar S_u} \Big] \mathbf{1}_\FM, \mathscr{Z}_u\Big) du \\
\notag&\q+ \int_0^t \Big(\big(\Theta_{t-u}\phi(\F,\cdot,\infty)+\Theta_{t-u}\phi(\M,\infty,\cdot)-\Theta_{t-u}\phi(\FM,\cdot,\cdot)\big) \partial_Sh^{\FM,\infty}_{\bar S_u}(\mathscr{Z}_u) \mathbf{1}_\FM, \bar S_u\Big) du \\
\notag&\q+ \int_0^t \Big(\big(\Theta_{t-u}\phi(\F,\cdot,\infty)+\Theta_{t-u}\phi(\M,\infty,\cdot)-\Theta_{t-u}\phi(\FM,\cdot,\cdot)\big) h^{\FM,\infty}_{\bar S_u} \mathbf{1}_\FM, \mathscr{Z}_u\Big) du \\
\notag&\q+ \int_0^t \Big( \big[\Theta_{t-u}\phi(\F,0,\infty) \partial_S(b^\infty_{\bar S_u}m^{\F,\infty}_{\bar S_u})(\mathscr{Z}_u) + \Theta_{t-u}\phi(\M,\infty,0) \partial_S(b^\infty_{\bar S_u}m^{\M,\infty}_{\bar S_u})(\mathscr{Z}_u) \big] (\mathbf{1}_\FM+\mathbf{1}_\F), \bar S_u\Big) du \\
\notag&\q+ \int_0^t \Big( \big[\Theta_{t-u}\phi(\F,0,\infty)m^{\F,\infty}_{\bar S_u} + \Theta_{t-u}\phi(\M,\infty,0)m^{\M,\infty}_{\bar S_u}\big] b^\infty_{\bar S_u} (\mathbf{1}_\FM+\mathbf{1}_\F), \mathscr{Z}_u\Big) du \\
\notag&\q- \int_0^t \Big(\Big( [\Theta_{t-u}\phi(\F,\Cdot,\infty)+\Theta_{t-u}\phi(\M,\infty,\ast)-\Theta_{t-u}\phi(\FM,\Cdot,\ast)] \partial_S\rho^\infty_{\bar S_u}(\mathscr{Z}_u)(\Cdot,\ast) \mathbf{1}_\F, \bar S_u\Big)_{\Cdot} \mathbf{1}_\M, \bar S_u\Big)_\ast du \\
\notag&\q- \int_0^t \Big(\Big( [\Theta_{t-u}\phi(\F,\Cdot,\infty)+\Theta_{t-u}\phi(\M,\infty,\ast)-\Theta_{t-u}\phi(\FM,\Cdot,\ast)] \rho^\infty_{\bar S_u}(\Cdot,\ast) \mathbf{1}_\F, \mathscr{Z}_u\Big)_{\Cdot} \mathbf{1}_\M, \bar S_u\Big)_\ast du \\
\notag&\q- \int_0^t \Big(\Big( [\Theta_{t-u}\phi(\F,\Cdot,\infty)+\Theta_{t-u}\phi(\M,\infty,\ast)-\Theta_{t-u}\phi(\FM,\Cdot,\ast)] \rho^\infty_{\bar S_u}(\Cdot,\ast) \mathbf{1}_\F, \bar S_u\Big)_{\Cdot} \mathbf{1}_\M, \mathscr{Z}_u\Big)_\ast du \\
&\q+ \int_0^t (\Theta_{t-u}\phi, d\tilde M^\infty_u).
\label{E:phiZinftyMonogamy}
\end{align}
}
Showing the uniqueness of the solution to \eqref{E:phiZinftyMonogamy} and that \eqref{E:phiZinftyMonogamy} is equivalent to \eqref{E:fZinftyMonogamy} completes the proof. 
\end{proof}


\end{document}